\numberwithin{equation}{section}
\newtheorem{thm}{Theorem}[section]
\newtheorem{pro}[thm]{Proposition}
\newtheorem{lem}[thm]{Lemma}
\newtheorem{con}[thm]{Conjecture}
\newtheorem{cor}[thm]{Corollary}
\newtheorem{rem}[thm]{Remark}
\newtheorem{defi}[thm]{Definition}
\newtheorem*{cor*}{Corollary}
\newtheorem*{thm*}{Theorem}
\title[Special matchings and parabolic K--L polynomials]{Special matchings and 
parabolic Kazhdan--Lusztig polynomials}
\author{Mario Marietti}
\address{Dipartimento  di Ingegneria Industriale e Scienze Matematiche, Universit\`a Politecnica delle Marche, Via Brecce Bianche, 60131 Ancona,  Italy}
\email{m.marietti@univpm.it}
\subjclass[2010]{05E99, 20F55}
\keywords{Kazhdan--Lusztig polynomials, Coxeter groups, Special matchings}
\begin{document}

\maketitle

\begin{abstract}
We prove that the combinatorial concept of a special matching can be used to compute the parabolic Kazhdan--Lusztig polynomials of doubly laced Coxeter groups and of dihedral Coxeter groups. In particular, for this class of groups which  includes all Weyl groups, our results generalize to the parabolic setting the main results in [Advances in Math. {202} (2006), 555-601]. As a consequence, the parabolic Kazhdan--Lusztig polynomial indexed by $u$ and $v$  depends only on the poset structure of the Bruhat interval from the identity element to $v$ and on which elements of that interval are minimal coset representatives.
\end{abstract}

\section{Introduction}
Kazhdan--Lusztig polynomials are polynomials $\{P_{u,v}(q)\}_{u,v\in W}$ in one variable $q$, which are indexed by a pair of elements $u,v$ in a Coxeter group $W$. These polynomials were introduced by Kazhdan and Lusztig in \cite{K-L} as a tool for the construction of certain important representations of the Hecke algebra associated with $W$. Since then, Kazhdan--Lusztig polynomials have been shown to have applications in many contexts and now play a central role in Lie theory and representation theory.
In particular, when $W$ is a Weyl group, the Kazhdan--Lusztig polynomial $P_{u,v}(q)$ is the Poincar\'e polynomial of the local intersection cohomology groups (in even degrees) of the Schubert variety associated with $v$ at any point of the Schubert variety associated with $u$ (see \cite{KL2}).
Thus, for Weyl groups, the coefficients of  $P_{u,v}(q)$ are nonnegative, a fact which is not at all evident from the definition of $P_{u,v}(q)$. 
In fact, Elias and  Williamson \cite{E-W} have recently shown, much more generally, that the coefficients of  Kazhdan--Lusztig polynomials are nonnegative for every Coxeter group.

At present, from a combinatorial point of view, the most challenging conjecture about Kazhdan--Lusztig polynomials  is arguably the following, made by Lusztig in private and, independently, by Dyer \cite{Dyeth}.
\begin{con}
\label{comb-inv-con}
The Kazhdan--Lusztig polynomial $P_{u,v}(q)$ depends only on the combinatorial structure of the Bruhat interval $[u,v]$ (i.e., the isomorphism type of $[u,v]$ as a poset under Bruhat order). 
\end{con}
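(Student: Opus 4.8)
The plan is to reduce combinatorial invariance to a purely order-theoretic recurrence and then to lift the special-matching machinery, which in the cited work and in this paper handles lower intervals $[e,v]$, to an arbitrary Bruhat interval $[u,v]$. The starting observation is that a poset isomorphism $\phi\colon[u,v]\to[u',v']$ automatically preserves the rank function, hence the length differences $\ell(x,y):=\ell(y)-\ell(x)$, and carries matchings of the Hasse diagram to matchings; in particular it sends special matchings to special matchings. Thus, if one can exhibit a recurrence computing $P_{u,v}$ using only (i) a special matching $M$ of the \emph{abstract} poset $[u,v]$, (ii) the rank function, and (iii) the polynomials $P_{x,y}$ attached to strictly shorter subintervals, then combinatorial invariance follows at once by induction on $\ell(u,v)$, transporting $M$ across $\phi$.

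First I would isolate the two intrinsic ingredients. The defining characterization of $P_{u,v}$ as the unique family with $P_{v,v}=1$, $\deg P_{u,v}\le(\ell(u,v)-1)/2$ for $u<v$, and $q^{\ell(u,v)}P_{u,v}(1/q)=\sum_{u\le z\le v}R_{u,z}(q)\,P_{z,v}(q)$ shows that $P_{u,v}$ is determined once the degree bound and the R-polynomials $\{R_{x,y}\}_{u\le x\le y\le v}$ are known. The degree bound is intrinsic, since $\ell(u,v)$ is the rank of the interval, so the whole problem reduces to proving that each $R_{x,y}$ depends only on the isomorphism type of $[x,y]$. For this I would seek a special-matching recurrence for the R-polynomials mirroring the descent recurrence, with left or right multiplication by a descent of $v$ replaced by an arbitrary special matching $M$ of $[u,v]$: if $M(y)\lessdot y$, then $R_{x,y}$ should be recoverable from $R_{M(x),M(y)}$ and $R_{x,M(y)}$ according to whether $M$ raises or lowers $x$.

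The crux, and the reason the statement remains a conjecture, is the passage from lower intervals to arbitrary intervals. In the lower-interval theory the bottom element $e$ is distinguished and multiplication by a descent $s$ of $v$ is a special matching of the entire interval $[e,v]$, while the recurrence for $P_{\cdot,v}$ is governed by $s$ transparently. For a general $[u,v]$ one needs $s$ to be simultaneously a descent of $v$ and an ascent of $u$, and even when such a relative descent exists, multiplication by $s$ need not restrict to a matching of $[u,v]$, and the recurrence for $P_{u,v}$ loses its clean form because the lower endpoint no longer stays fixed. Hardest of all is the analogue of the structural lemma underpinning the lower-interval result, namely that \emph{every} special matching of the abstract poset computes the same polynomial through the recurrence, independently of the choice of $M$. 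For $[e,v]$ this rests on a classification of special matchings and on their commutation properties; I expect the essential obstacle to be the absence of any such classification for general intervals, so that one must either develop a substitute structural theory of special matchings of arbitrary Bruhat intervals or find a different order-theoretic invariant computing the R-polynomials directly.
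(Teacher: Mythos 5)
What you were asked to prove is Conjecture~\ref{comb-inv-con} of the paper, i.e.\ the Combinatorial Invariance Conjecture of Lusztig and Dyer: the paper contains no proof of it, and none is known. The only established cases are the one where the bottom element is the identity, proved in \cite{BCM1} via special matchings, and (in the present paper) the parabolic analogue of that lower-interval case for doubly laced and dihedral Coxeter systems. Your proposal is therefore correct in its essential judgment: you do not claim a proof, the reduction you describe (determine $P_{u,v}$ from the $R$-polynomials and the rank function via the defining recurrence and degree bound, transport special matchings across the poset isomorphism, and induct on rank) is exactly the framework of \cite{BCM1} that this paper extends, and the obstruction you single out --- the absence of any structural theory of special matchings of arbitrary intervals $[u,v]$, and hence of a proof that every such matching ``calculates'' the $R$-polynomials --- is indeed the recognized barrier. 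In the lower-interval case that missing ingredient is supplied by the classification of special matchings by right and left systems (Theorem~\ref{caratteri}) together with the commutation results of Section~3, none of which is currently available off the bottom of the Bruhat order.

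One detail of your analysis is inaccurate. If $s$ is a right descent of $v$ and an ascent of $u$, then right multiplication by $s$ \emph{does} restrict to a special matching of $[u,v]$: the lifting property shows $\rho_s$ maps $[u,v]$ into itself, and Lemma~\ref{res}(1) then gives speciality. The genuine trouble with the multiplication recurrence on a general interval is different: when $s$ is a descent of both $u$ and $v$, the identity $R_{u,v}=R_{us,vs}$ involves the pair $(us,vs)$ with $us<u$, so the induction escapes the interval $[u,v]$ altogether --- this is precisely what cannot happen when $u=e$, and it is why the lower-interval argument does not transfer. With that correction, your assessment of why the statement remains a conjecture stands.
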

Conjecture~\ref{comb-inv-con} is usually referred to as the Combinatorial Invariance Conjecture. It is equivalent to the analogous conjecture on the Kazhdan--Lusztig $R$-polynomials. These are also polynomials $\{R_{u,v}(q)\}_{u,v\in W}$ in one variable $q$,  indexed by a pair of elements $u,v$ in a Coxeter group $W$, and were introduced by Kazhdan--Lusztig in the same work \cite{K-L}. The knowledge of the entire family $\{R_{u,v}(q)\}_{u,v\in W}$ of the Kazhdan--Lusztig $R$-polynomials of a Coxeter group $W$ is equivalent to the  knowledge of the entire family $\{P_{u,v}(q)\}_{u,v\in W}$ of the Kazhdan--Lusztig polynomials of $W$. 

 The Combinatorial Invariance Conjecture asserts that, given two Coxeter groups $W$ and $W'$ and two pairs of elements $u,v\in W$ and $u',v' \in W'$ such that $[u,v]\cong[u',v']$, it holds that $R_{u,v}(q) = R_{u',v'}(q)$ and  $P_{u,v}(q) = P_{u',v'}(q)$. This was proved  in \cite{BCM1} to hold when $u$ and $u'$ are the identity elements of $W$ and $W'$. The proof of this result is constructive since it describes an algorithm to compute the Kazhdan--Lusztig $R$-polynomial $R_{u,v}(q)$ depending only on the lower Bruhat interval $[e,v]$ (where $e$ denotes the identity element). This algorithm is based on combinatorial tools named {\em special matchings}, which are abstractions of the maps 
given by the multiplication (on the left or on the right) by a Coxeter generator. 
A special matching of $[e,v]$ is an involution
\( M:[e,v]\rightarrow [e,v] \) such that 
\begin{enumerate}
\item either \( u \lhd M(u)\) or  \( u \rhd M(u)\), for all \( u\in [e,v]  \),
\item  if $u_1\lhd u_2$ then  $M(u_1)\leq M(u_2),$
 for all \( u_1,u_2\in [e,v] \) such that \( M(u_1)\neq u_2 \).
\end{enumerate}

(Here, $\lhd$ denotes the covering relation, i.e., $x\lhd y$ means that $x<y$ and there is no $z$ with $x<z<y$). The concept of special matching is purely poset-theoretic, that is, it depends only on the poset structure of $[e,v]$ and not on other structures (in particular,  the algebraic structure of group plays no role).
In \cite{BCM1}, it is proved that special matchings may be used in place of 
multiplication maps in the recurrence formula which computes the $R$-polynomials.
 
In order to find a method for the computation of the dimensions of the intersection cohomology
modules corresponding to Schubert varieties in $G/P$, where $P$ is a parabolic subgroup of the
Kac--Moody group $G$, Deodhar \cite{Deo87} defined two parabolic analogues of the Kazhdan--Lusztig polynomials,
which correspond to the roots $x=q$ and $x=-1$ of the equation $x^2=q+ (q-1)x$. Also, Deodhar  defined two parabolic analogues of the $R$-polynomials, denoted $\{R_{u,w}^{H,x} (q)\}_{u,w\in W^H}$, whose knowledge is again equivalent to the knowledge of the parabolic Kazhdan--Lusztig polynomials. These polynomials are indexed by pairs of elements in the set $W^H$ of minimal coset representatives with respect to the standard parabolic subgroup $W_H$ generated by a subset $H\subseteq S$. The parabolic Kazhdan--Lusztig and $R$-polynomials  coincide with  the ordinary Kazhdan--Lusztig and $R$-polynomials when $H= \emptyset$.

Since the appearance of \cite{BCM1}, the authors have been asked many times  whether something analogous could be done in the parabolic setting, i.e., whether  a poset-theoretic way to compute the parabolic Kazhdan--Lusztig polynomials could be found.
In this work, we generalize the main results in \cite{BCM1} to the parabolic setting, when $(W,S)$ is a doubly laced Coxeter system (and, also, in the case of dihedral Coxeter systems, i.e. Coxeter systems of rank 2, which is much easier). 

Let $(W,S)$ be a Coxeter system which is either doubly laced or dihedral. Let $H \subseteq S$ and $w\in W^H$. Let us consider the special matchings $M$ of the lower Bruhat interval $[e,w]$ satisfying the following further property:
$$u \leq w, u \in W^H, M(u) \lhd u \Longrightarrow M(u) \in W^H.$$
We call such matchings {\em $H$-special}. Note that the concept of $H$-special matching depends both on the poset structure of the complete interval $[e,w]$ and on how the parabolic interval $[e,w]^{H} = \{ z \in W^{H}: \;  e \leq z \leq w \}$ embeds in $[e,w]$. 
In this work, we show that the $H$-special matchings  may be used in place of left
multiplication maps in the recurrence formula for the parabolic Kazhdan--Lusztig $R$-polynomials: precisely, if $M$ is an $H$-special matching of $[e,w]$,  we have
\begin{equation}
 \label{calcola}
 R_{u,w}^{H,x} (q)= \left\{ \begin{array}{ll}
R_{M(u),M(w)}^{H,x}(q), & \mbox{if $M(u)  \lhd u$,} \\
(q-1)R_{u,M(w)}^{H,x}(q)+qR_{M(u),M(w)}^{H,x}(q), & \mbox{if $M(u) \rhd u$
and $M(u) \in W^{H}$,} \\
(q-1-x)R_{u,M(w)}^{H,x}(q), & \mbox{if $M(u) \rhd u$ and  $M(u) \notin W^{H}$.} 
\end{array} \right. 
\end{equation}
The parabolic $R$-polynomials can be computed by means of $H$-special matchings by iterating (\ref{calcola}).
As a corollary, the parabolic $R$-polynomial $ R_{u,w}^{H,x} (q)$ depends only on the poset structure of the complete interval $[e,w]$ and on which of the elements in this interval are minimal coset representatives.
Indeed, we have the following result (Corollary~\ref{congettura} in the paper).
\begin{thm*}
Let $(W_1,S_1)$ and $(W_2,S_2)$ be two doubly laced or dihedral Coxeter systems, with identity elements $e_1$ and $e_2$, and let $H_1 \subseteq S_1$ and 
$H_2\subseteq S_2$. Let $v_1 \in W_1^{H_1}$ and $v_2 \in W_2^{H_2}$ be such that there exists 
a poset-isomorphism  $\psi$ from $[e_1,v_1]$ to $[e_2,v_2]$ which restricts to a 
poset-isomorphism from $[e_1,v_1]^{H_1}$ to $[e_2,v_2]^{H_2}$. Then, for all $u,w \in [e_1,v_1]^{H_1}$, we have 
$$R_{u,w}^{H_1,x} (q)=R_{\psi(u),\psi(w)}^{H_2,x} (q)  \quad \text{ and } \quad  P_{u,w}^{H_1,x} (q)=P_{\psi(u),\psi(w)}^{H_2,x} (q).$$  
\end{thm*}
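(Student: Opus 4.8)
The plan is to prove the $R$-polynomial identity by induction on $\ell(w)$ (equivalently, on the rank of $w$ in the graded poset $[e_1,v_1]$), using the recurrence (\ref{calcola}), and then to deduce the $P$-polynomial identity from the fact, recalled in the introduction, that the family of parabolic $R$-polynomials determines the family of parabolic $P$-polynomials. The first thing I would record is that $\psi$ automatically preserves all the data entering (\ref{calcola}): being a poset-isomorphism it sends $e_1$ to $e_2$, preserves the covering relation $\lhd$, and---since Bruhat intervals are graded by length---preserves rank, so that $\ell(w)-\ell(u)=\ell(\psi(w))-\ell(\psi(u))$ for all $u\le w$ in $[e_1,v_1]$; moreover, by hypothesis $\psi$ restricts to an isomorphism of the parabolic intervals, which is exactly the statement that for $z\in[e_1,v_1]$ one has $z\in W_1^{H_1}$ if and only if $\psi(z)\in W_2^{H_2}$.

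For the base case $w=e_1$ we have $u=e_1$ and $R^{H_1,x}_{e_1,e_1}=1=R^{H_2,x}_{e_2,e_2}$. For the inductive step, fix $w\in[e_1,v_1]^{H_1}$ with $\ell(w)\ge 1$ and choose an $H_1$-special matching $M$ of $[e_1,w]$, which exists because the parabolic $R$-polynomials are computed by iterating (\ref{calcola}). The key step is to transport $M$ through $\psi$: I would set $M':=\psi\circ M\circ\psi^{-1}$ on $[e_2,\psi(w)]$ and check that $M'$ is an $H_2$-special matching. That $M'$ is a special matching is immediate because $\psi$ is a poset-isomorphism; that it is $H_2$-\emph{special} is precisely where the hypothesis on $\psi$ is used, since the defining implication ``$u\le\psi(w),\ u\in W_2^{H_2},\ M'(u)\lhd u\Rightarrow M'(u)\in W_2^{H_2}$'' is the $\psi$-image of the corresponding implication for $M$, and $\psi$ preserves both $\lhd$ and membership in the minimal-coset-representative set.

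Having aligned the two matchings, I would run (\ref{calcola}) on both sides simultaneously. Writing $u'=\psi(u)$, the case distinctions match up: $M(u)\lhd u$ if and only if $M'(u')\lhd u'$, and $M(u)\in W_1^{H_1}$ if and only if $M'(u')\in W_2^{H_2}$, so $u$ and $u'$ fall into the same one of the three branches. Since $w$ is the top of $[e_1,w]$ we have $M(w)\lhd w$, and applying the $H_1$-special property to $w$ itself gives $M(w)\in W_1^{H_1}$; the same holds for $M'(\psi(w))=\psi(M(w))$. Every $R$-polynomial on the right-hand side of (\ref{calcola}) is therefore indexed by elements of $[e_1,M(w)]^{H_1}$ (respectively $[e_2,\psi(M(w))]^{H_2}$), whose top element $M(w)$ has strictly smaller length; by the inductive hypothesis, applied with the same $v_1,v_2,\psi$, these coincide with their $\psi$-images, and comparing the two instances of (\ref{calcola}) term by term yields $R^{H_1,x}_{u,w}=R^{H_2,x}_{\psi(u),\psi(w)}$.

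Finally, for the $P$-polynomials I would invoke the determination of the $P^{H,x}$ by the $R^{H,x}$: the $P^{H,x}_{u,w}$ are characterized as the unique polynomials satisfying a degree condition depending only on $\ell(w)-\ell(u)$ together with an inversion relation that expresses $P^{H,x}_{u,w}$ through the $R^{H,x}_{u,z}$ with $u\le z\le w$ and $z\in W_1^{H_1}$. Since $\psi$ identifies the parabolic subinterval $[u,w]^{H_1}$ with $[\psi(u),\psi(w)]^{H_2}$, preserves the length differences entering the degree condition, and---by the $R$-part just proved---matches all the relevant $R$-polynomials, the unique solution on the $W_1$-side is carried by $\psi$ to the unique solution on the $W_2$-side, giving $P^{H_1,x}_{u,w}=P^{H_2,x}_{\psi(u),\psi(w)}$. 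The one point demanding care throughout is the transport of the $H$-special matching: it is exactly the compatibility of $\psi$ with the minimal-coset-representative structure, and not merely with the Bruhat order, that makes $M'$ genuinely $H_2$-special and thus allows the two recursions to be run in lockstep.
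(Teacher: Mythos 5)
Your proposal is correct and takes essentially the same approach as the paper: it rests on the main theorems that every $H$-special matching satisfies the recurrence (\ref{calcola}), transports $H_1$-special matchings through $\psi$ to $H_2$-special matchings (using that $\psi$ preserves both the order and membership in the minimal-coset-representative sets), runs the two recursions in lockstep, and then passes from $R$- to $P$-polynomials via Theorem~\ref{7.2}. The paper's own proof is simply a terser version of this argument, leaving the induction on $\ell(w)$ implicit.
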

Since the $\emptyset$-special matchings are exactly the special matchings, the preceding result implies the main result of \cite{BCM1} for the ordinary Kazhdan--Lusztig and $R$-polynomials.

In the proofs, we use  some algebraic properties of the special matchings of a  lower Bruhat interval $[e,w]$ which are valid for any arbitrary Coxeter group $W$ (see \cite{Mchara}), while the further hypotheses on $W$ are needed only in few cases.
We believe that the main result of this work might be generalized.

\par

In studying the parabolic Kazhdan--Lusztig and $R$-polynomials $\{P_{u,w}^{H,x} (q)\}_{u,w\in W^H}$ and  $\{R_{u,w}^{H,x} (q)\}_{u,w\in W^H}$, attention has been focused on the parabolic intervals 
$ [u,w]^{H}$. As a consequence, the parabolic analogue of the Combinatorial Invariance Conjecture (Conjecture~\ref{comb-inv-con}) has been considered to be the following.
\begin{con}
\label{comb-inv-con-par-falsa}
The parabolic Kazhdan--Lusztig polynomial $P^{H,x}_{u,v}(q)$ depends only on the combinatorial structure of the parabolic Bruhat interval $[u,v]^H$. 
\end{con}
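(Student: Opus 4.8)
The plan is to \emph{disprove} Conjecture~\ref{comb-inv-con-par-falsa} rather than to establish it: both the chosen terminology and, more substantively, the form of the main theorem (Corollary~\ref{congettura}) indicate that the parabolic interval $[u,v]^H$ is not enough data. Corollary~\ref{congettura} pins down $P^{H,x}_{u,v}$ using the \emph{full} lower interval $[e,v]$ together with the information of which of its elements belong to $W^H$; were the parabolic interval $[u,v]^H$ alone sufficient, this richer package would be superfluous. I would therefore look for the counterexample exactly in the gap between the two formulations: two pairs whose parabolic intervals are isomorphic as posets, but whose ambient full intervals --- and hence whose pattern of minimal-coset-representative membership --- differ, so that the associated polynomials come out different.

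Concretely, I would restrict to the smallest doubly laced or dihedral systems in which nontrivial parabolic polynomials arise (the dihedral systems and the rank-three types $A_3$ and $B_3$ are the natural testing ground) and compute the relevant $R^{H,x}_{u,v}$ and $P^{H,x}_{u,v}$ directly, either from Deodhar's recurrence or, equivalently, by iterating the $H$-special matching recurrence~(\ref{calcola}). The structural feature guiding the search is the third branch of~(\ref{calcola}): the factor $(q-1-x)$ is applied precisely when $M(u)\rhd u$ and $M(u)\notin W^H$, that is, it is triggered by elements \emph{outside} $W^H$. Such elements are deleted when one passes to the parabolic interval, so their effect on the polynomial is invisible to $[u,v]^H$. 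The target is thus a pair of instances whose matching computations differ only in how often this branch is invoked, while the surviving poset of minimal coset representatives is left unchanged.

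Given candidate pairs $(u_1,v_1)\in W_1^{H_1}$ and $(u_2,v_2)\in W_2^{H_2}$, the verification splits into two independent checks: first, an explicit poset-isomorphism $[u_1,v_1]^{H_1}\cong[u_2,v_2]^{H_2}$, matching ranks and covering relations; and second, the inequality $P^{H_1,x}_{u_1,v_1}\neq P^{H_2,x}_{u_2,v_2}$ for at least one of $x\in\{q,-1\}$ (equivalently, a discrepancy in the $R$-polynomials). Since each of these is a finite mechanical computation, the whole argument reduces to a search followed by a check.

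The main obstacle I anticipate is the search itself. Isomorphic parabolic intervals that carry distinct polynomials are scarce at small rank, because the parabolic poset already determines a great deal; and one must be vigilant that the claimed isomorphism genuinely respects the Bruhat order and not merely the rank-generating function. The delicate point is that the discrepancy has to be engineered to live entirely among the non-representative elements that the parabolic interval forgets, so that the two examples are indistinguishable as parabolic posets yet are separated by their $(q-1-x)$ contributions. Meeting these two requirements simultaneously is where the real effort lies, and it is in essence the reason why the correct invariant is the full interval of Corollary~\ref{congettura} and not the parabolic one.
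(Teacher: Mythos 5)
You are right about the essential point: this ``statement'' is not a result the paper proves but a conjecture it records precisely in order to refute, and your decision to disprove it---together with the structural intuition that elements outside $W^H$ feed the $(q-1-x)$ branch of~(\ref{calcola}) and are then erased when one passes to $[u,v]^H$---matches the paper's own discussion. However, there is a genuine gap: a disproof of Conjecture~\ref{comb-inv-con-par-falsa} \emph{is} an explicit counterexample, and you never produce one; what you offer is a search plan plus a verification scheme. The paper's ``proof'' is the citation of a concrete example, due to Mongelli \cite{Mon}, and without such an example your argument has no content. Note also that your opening heuristic (that Corollary~\ref{congettura} would be ``superfluous'' if the parabolic interval sufficed) is not a proof of anything: a priori the richer package of data could simply be redundant.

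The second, more concrete, problem is that the search space you commit to cannot deliver. In a dihedral group with $H\neq\emptyset$ the quotient $W^H$ is a chain graded by length, so two isomorphic parabolic intervals there have the same rank, and Proposition~\ref{prodiedrali} shows that the parabolic $R$-polynomial then depends only on that rank; hence no counterexample exists among dihedral groups, and comparing a dihedral chain with a chain elsewhere runs into the same rigidity. The counterexample the paper invokes in fact requires rank $4$: in type $F_4$, with $S=\{s_1,s_2,s_3,s_4\}$, $m(s_1,s_2)=m(s_3,s_4)=3$, $m(s_2,s_3)=4$, and $H=\{s_1,s_2,s_3\}$, one takes $u=s_3s_1s_2s_3s_4$, $v=s_3s_4s_2s_3s_1s_2s_3s_4$ and $u'=s_2s_3s_4$, $v'=s_4s_3s_1s_2s_3s_4$; then $[u,v]^H\cong[u',v']^H$ as posets, yet $P_{u,v}^{H,q}(q)=q\neq 0=P_{u',v'}^{H,q}(q)$ and $P_{u,v}^{H,-1}(q)=q+1\neq 1=P_{u',v'}^{H,-1}(q)$ (while the full intervals $[u,v]$ and $[u',v']$ are \emph{not} isomorphic, which is why Conjecture~\ref{comb-inv-con-parab} survives). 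So the honest completion of your plan is either to enlarge the search to rank-$4$ doubly laced groups---where it would eventually land on this pair---or simply to cite Mongelli; restricted to dihedral, $A_3$ and $B_3$ as written, your search would terminate empty-handed and the conjecture would remain unrefuted.
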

Conjecture  \ref{comb-inv-con-par-falsa} has  recently been shown to be false in the case $x=q$ by Mongelli \cite{Mon}, who provides   the following counterexample for the Coxeter system $(W,S)$ of type $F_4$. Let $S=\{s_1,s_2,s_3,s_4\}$ with $(s_1s_2)^3=e$,  $(s_2s_3)^4=e$, $(s_3s_4)^3=e$, and $(s_is_j)^2=e$ for the other values of $i$ and $j$. Consider the subset $H= \{s_1,s_2,s_3\}$ of $S$ and the elements $u=s_3s_1s_2s_3s_4$, $v=s_3s_4s_2s_3s_1s_2s_3s_4$, $x=s_2s_3s_4$, and $y=s_4s_3s_1s_2s_3s_4$ of $W^H$. Then the parabolic intervals $[u,v]^H$ and $[x,y]^H$ are isomorphic while $P_{u,v}^{H,q} (q)=q$ and $P_{x,y}^{H,q} (q)=0$. We point out that $P_{u,v}^{H,-1} (q)=q+1$ and $P_{x,y}^{H,-1} (q)=1$, so that Conjecture~\ref{comb-inv-con-par-falsa} is false also in the case $x=-1$.
In particular, there cannot be a general method to compute the parabolic Kazhdan--Lusztig polynomial $P_{u,w}^{H,x} (q)$ (or  $R$-polynomial $R_{u,w}^{H,x} (q)$) just from the isomorphism type of the parabolic interval $ [u,w]^{H}$.

Roughly speaking, the basic idea of the present work is that,   for a poset-theoretic approach, also the elements 
that are not minimal coset representatives carry some information, and what should be considered is not just the parabolic interval $ [u,w]^{H} $ but the complete interval $ [u,w]$, together with  the notion of how $ [u,w]^{H} $ embeds in $ [u,w]$. 
This is reflected, for instance, in the fact that the matchings for which  (\ref{calcola}) holds are the $H$-special matchings, which are special matchings of 
the complete interval with a good behaviour with respect to the elements 
in the parabolic interval.
Within this perspective, the right approach to the generalization of the Combinatorial 
Invariance Conjecture  to the parabolic setting would be studying to what extent the following conjecture is true.

\begin{con}
\label{comb-inv-con-parab}
Let $(W_1,S_1)$ and $(W_2,S_2)$ be two Coxeter systems, $H_1 \subseteq S_1$ and 
$H_2\subseteq S_2$. Let $u_1,v_1 \in W_1^{H_1}$ and $u_2,v_2 \in W_2^{H_2}$ be such that there exists 
a poset-isomorphism  from $[u_1,v_1]$ to $[u_2,v_2]$ which restricts to a 
poset-isomorphism from $[u_1,v_1]^{H_1}$ to $[u_2,v_2]^{H_2}$. Then $P_{u_1,v_1}^{H_1,x} (q)=P_{u_2,v_2}^{H_2,x} (q)$  (equivalently, $R_{u_1,v_1}^{H_1,x} (q)=R_{u_2,v_2}^{H_2,x} (q)$).
\end{con}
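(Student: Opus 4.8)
The plan is to lift the $H$-special matching machinery from lower intervals $[e,w]$ to an \emph{arbitrary} interval $[u,w]$, so as to obtain a recurrence for $R^{H,x}_{u,w}$ that reads off from the poset $[u,w]$ together with the marking of its parabolic part $[u,w]^H$, and then to close by induction. Two preliminary observations frame the difficulty. First, Conjecture~\ref{comb-inv-con-parab} is genuinely stronger than the Theorem stated above (Corollary~\ref{congettura}): in that result one is handed an isomorphism of the \emph{full} lower intervals $[e_1,v_1]\cong[e_2,v_2]$ compatible with the parabolic markings, and this ambient lower interval supplies the left multiplication maps driving (\ref{calcola}); in the conjecture one is given only the isomorphism of the upper piece $[u_1,v_1]\cong[u_2,v_2]$, so no such ambient structure is available. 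Second, the special case $H_1=H_2=\emptyset$ is exactly the ordinary Combinatorial Invariance Conjecture (Conjecture~\ref{comb-inv-con}) for arbitrary intervals, which is open; hence any honest plan is a research program whose success would, in particular, settle that problem.

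Concretely I would proceed in three steps. \textbf{Step 1.} Define an $H$-special matching of an arbitrary interval $[u,w]$ by the two poset conditions in the definition of special matching, now read inside $[u,w]$, together with the minimal coset representative condition: $z\in[u,w]\cap W^H$ and $M(z)\lhd z$ imply $M(z)\in W^H$. This notion depends only on the poset $[u,w]$ and on which of its elements lie in $W^H$, so it is transported verbatim by the hypothesized isomorphism $\psi$: if $M$ is $H_1$-special for $[u_1,v_1]$, then $M':=\psi\circ M\circ\psi^{-1}$ is $H_2$-special for $[u_2,v_2]$. \textbf{Step 2.} Prove that whenever $[u,w]$ carries an $H$-special matching $M$, the recurrence (\ref{calcola}) holds with the lower intervals $[e,\cdot]$ replaced by the corresponding intervals $[u,\cdot]$. \textbf{Step 3.} Conclude by induction on the height $\ell(w)-\ell(u)$, a poset invariant preserved by $\psi$. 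Here the key simplification is that the minimum $u$ of $[u,w]$ is necessarily matched \emph{upward}, so only the last two cases of (\ref{calcola}) occur; in both, the right-hand side involves the parabolic $R$-polynomials of the intervals $[u,M(w)]$ and $[M(u),M(w)]$, which are \emph{genuine sub-intervals} of $[u,w]$ of strictly smaller height, onto which $\psi$ restricts to isomorphisms compatible with the parabolic markings (via $M'$, since $\psi(M(w))=M'(\psi(w))$) and to which the inductive hypothesis therefore applies term by term. As in the body of the paper, I would restrict to doubly laced or dihedral systems to keep the case analysis in Step 2 finite and import from \cite{Mchara} the structural identities satisfied by special matchings.

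The whole difficulty concentrates in Steps 1 and 2, and it is twofold. The minor point is \emph{existence}: a lower interval always admits a special matching --- left multiplication by a left descent of $w$ --- but an arbitrary interval $[u,w]$ is not guaranteed to carry any $H$-special matching, so the recurrence may fail to start. The decisive point is \emph{realizability}. The recurrence (\ref{calcola}) is proved in \cite{BCM1} and in this paper by comparing an abstract special matching of $[e,w]$ with an actual multiplication map $z\mapsto sz$ (for $s$ a left descent of $w$), for which the classical $R$-polynomial recurrence is available, and then transferring the identity to the abstract matching via the structural results of \cite{Mchara}. For an arbitrary interval there is in general \emph{no} generator $s$ with $s\cdot[u,w]=[u,w]$, so there is no multiplication map to serve as the seed and no classical recurrence attached to the interval alone. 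Supplying this missing bridge between the combinatorially defined matching and the algebraically defined $R^{H,x}_{u,w}$ is exactly the content of the open Combinatorial Invariance Conjecture when $H=\emptyset$. I would therefore expect real progress on Conjecture~\ref{comb-inv-con-parab} to require a genuinely new poset invariant computing $R_{u,w}$ directly from $[u,w]$, rather than a transplant of the arguments for lower intervals.
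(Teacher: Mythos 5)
The statement you were asked to prove is Conjecture~\ref{comb-inv-con-parab} of the paper: it is stated as an open conjecture, and the paper contains \emph{no} proof of it. What the paper actually proves (Corollary~\ref{congettura} and Theorem~\ref{cong}) is exactly the special case you isolate at the outset: $u_1$ and $u_2$ the identity elements, and both systems doubly laced or dihedral. Your proposal correctly recognizes this, and your analysis of why the paper's machinery does not extend to arbitrary intervals is accurate on every essential point: the recurrence (\ref{calcola}) is seeded by left multiplication matchings, which exist on every lower interval $[e,w]$ (any $s\in D_L(w)$ supplies one) but need not exist in any form on an arbitrary interval $[u,w]$; the proof that an abstract $H$-special matching is calculating proceeds either by playing it against a commuting calculating matching (Theorem~\ref{secommutano}) or, in the exceptional cases, by explicit computation against $\lambda_t$ (Theorem~\ref{doppiamente-allacciati}), and both devices presuppose the ambient lower interval and the group structure it carries; and the case $H_1=H_2=\emptyset$ is the full Combinatorial Invariance Conjecture (Conjecture~\ref{comb-inv-con}), open for arbitrary intervals. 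Your observation that, if Steps 1 and 2 could be carried out, the induction of Step 3 would close (since the minimum of $[u,w]$ is matched upward, and the intervals $[u,M(w)]$, $[M(u),M(w)]$ are genuine sub-intervals of smaller height transported by $\psi$) is also consistent with how the paper's own induction runs in the lower-interval case.

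So there is no gap to report in the usual sense: you claim no proof, and none exists in the paper to compare against. The one point worth adding is that the paper itself frames this conjecture as a research direction rather than a result --- it is proposed as the correct parabolic analogue of combinatorial invariance, in contrast to the naive Conjecture~\ref{comb-inv-con-par-falsa}, which Mongelli's $F_4$ example refutes --- and the author explicitly limits the paper's contribution to the lower-interval, doubly laced or dihedral case. Your concluding assessment, that progress on arbitrary intervals would require a genuinely new poset invariant computing $R_{u,w}$ from $[u,w]$ rather than a transplant of the special-matching calculus, is a fair summary of why the problem remains open even for $H=\emptyset$.
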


Evidently, Conjecture~\ref{comb-inv-con-parab} reduces to Conjecture~\ref{comb-inv-con} for $H_1=H_2=\emptyset$. Mongelli's  is not a counterexample to Conjecture~\ref{comb-inv-con-parab} since the two Bruhat intervals   $[u,v]$ and $[x,y]$ of the counterexample are not isomorphic. The results of this work imply that, for doubly laced and dihedral Coxeter groups,  Conjecture~\ref{comb-inv-con-parab} holds when $u_1$ and $u_2$ are the identity elements.

\section{Notation, definitions and preliminaries}

This section reviews the background material that is needed  in the rest of this work.
We  follow   \cite{BB} and \cite[Chapter 3]{StaEC1} for undefined notation and 
terminology concerning, respectively, Coxeter groups and  partially ordered sets.

\subsection{Coxeter groups}
Given a Coxeter system $(W,S)$, we denote the entries of its Coxeter matrix $M$  by $m(s,s')$, for all $(s,s')\in S\times S$.  As usual, we say that a Coxeter system  - or a Coxeter group, by abuse of language - is simply laced (respectively, doubly laced) if $m(s,s')\leq 3$ (respectively, $m(s,s')\leq 4$), for all $(s,s')\in S\times S$. We denote by $e$
the identity of $W$, and we let $T =
\{ w s w ^{-1} : w \in W, \; s \in S \}$ be the
set of {\em reflections} of $W$.

Given $w \in W$, we denote by $\ell
(w )$ the length of $w $ with respect to $S$,
and we let
$$
\begin{array}{lll}
  D_{R}(w )& =&  \{ s \in S : \;
\ell(w  s) < \ell(w ) \}, \\
D_{L}(w )& =& \{ s \in S: \; \ell(sw)<\ell(w)\}. 
\end{array}
$$
We call the elements of $ D_R(w )$ and $D_{L}(w )$, respectively, the {\em right descents} and the 
{\em left descents} of $w $.

We now recall a result due to Tits (see \cite{Tit} or \cite[Theorem~3.3.1]{BB}). Given $s,s'\in S$ such that $m(s,s')< \infty$, let $\alpha_{s,s'}$ denote the alternating word  $ss'ss'\ldots$ of length $m(s,s')$. Two expressions are said to be linked by a braid-move (respectively, a nil-move) if it is possible to obtain one from the other by replacing a factor $\alpha_{s,s'}$ 
by a factor $\alpha_{s',s}$ (respectively, by deleting a factor $ss$).
\begin{thm}[Word Property]
\label{tits}
Let $u\in W$. Then:
 \begin{itemize}
\item any two reduced expressions of $u$ are linked by a finite sequence of braid-moves; 
\item any expression of $u$ (not necessarily reduced) is linked to any reduced expression of $u$ by a finite sequence of braid-moves and nil-moves.  \end{itemize}
\end{thm}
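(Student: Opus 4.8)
The plan is to establish the two assertions in turn, the first (reduced-to-reduced linking) doing the real work and the second following from it together with the Exchange Condition. Throughout I shall freely use that Condition, which holds for arbitrary Coxeter systems and is proved independently of the Word Property (e.g.\ via the reflection representation): if $s_1\cdots s_k$ is a reduced expression of $w$ and $s\in D_L(w)$, then $sw=s_1\cdots\widehat{s_i}\cdots s_k$ for some $i$, where $\widehat{\,\cdot\,}$ denotes omission, and symmetrically on the right. The combinatorial heart of the matter is the following lemma, which I isolate and treat as the main obstacle. \textbf{Key Lemma.} \emph{If $s,t\in D_L(u)$ and $s\neq t$, then $m(s,t)<\infty$ and $u$ admits a reduced expression beginning with the alternating word $\alpha_{s,t}$; that is, $u=\alpha_{s,t}\,u'$ with $\ell(u)=m(s,t)+\ell(u')$.}

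For the first assertion I would argue by induction on $n=\ell(u)$, the cases $n\le 1$ being immediate. Let $\mathbf{a}=a_1\cdots a_n$ and $\mathbf{b}=b_1\cdots b_n$ be two reduced expressions of $u$, and set $s=a_1$, $t=b_1$. If $s=t$, then $a_2\cdots a_n$ and $b_2\cdots b_n$ are reduced expressions of $su$, of length $n-1$; by the inductive hypothesis they are braid-linked, and prepending $s$ links $\mathbf{a}$ and $\mathbf{b}$. If $s\neq t$, then $s,t\in D_L(u)$, so the Key Lemma gives $m:=m(s,t)<\infty$ and $u=\alpha_{s,t}u'$ length-additively. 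Fixing a reduced word $\mathbf{u}'$ of $u'$, this produces two reduced expressions of $u$, namely $\mathbf{c}=\alpha_{s,t}\mathbf{u}'$ and $\mathbf{c}'=\alpha_{t,s}\mathbf{u}'$, which a single braid-move links to one another. Now $\mathbf{a}$ and $\mathbf{c}$ both begin with $s$, while $\mathbf{b}$ and $\mathbf{c}'$ both begin with $t$; applying the $s=t$ case to each pair links $\mathbf{a}$ to $\mathbf{c}$ and $\mathbf{b}$ to $\mathbf{c}'$ by braid-moves. Concatenating the three linkages $\mathbf{a}\leftrightarrow\mathbf{c}\leftrightarrow\mathbf{c}'\leftrightarrow\mathbf{b}$ completes the induction.

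For the second assertion it suffices, by the first, to link an arbitrary expression $\mathbf{x}=x_1\cdots x_k$ of $u$ to \emph{some} reduced expression using braid- and nil-moves; I would induct on $k$. If $\mathbf{x}$ is reduced there is nothing to do. Otherwise choose $j$ minimal with $x_1\cdots x_j$ non-reduced; then $x_1\cdots x_{j-1}$ is reduced and $x_j\in D_R(x_1\cdots x_{j-1})$, so the Exchange Condition yields an index $i\le j-1$ with $x_1\cdots x_{j-1}=x_1\cdots\widehat{x_i}\cdots x_{j-1}x_j$. Both sides are reduced expressions of the same element, hence braid-linked by the first assertion; performing these braid-moves on the prefix $x_1\cdots x_{j-1}$ of $\mathbf{x}$ (they do not touch $x_j\cdots x_k$) transforms $\mathbf{x}$ into $x_1\cdots\widehat{x_i}\cdots x_{j-1}\,x_j\,x_j\,x_{j+1}\cdots x_k$, which now contains the adjacent factor $x_jx_j$. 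A nil-move deletes it, producing an expression of $u$ of length $k-2$, to which the inductive hypothesis applies.

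It remains to prove the Key Lemma, which I expect to be the only genuinely delicate point. The approach I would take is to pass to the parabolic subgroup $W_{\{s,t\}}=\langle s,t\rangle$ and use the standard, Word-Property-free left-coset factorization $u=w_J\cdot{}^{J}\!u$ with $w_J\in W_{\{s,t\}}$ and $\ell(u)=\ell(w_J)+\ell({}^{J}\!u)$, together with the fact that for $r\in\{s,t\}$ one has $r\in D_L(u)$ if and only if $r\in D_L(w_J)$. Thus $w_J$ is an element of the dihedral group $W_{\{s,t\}}$ having both $s$ and $t$ as left descents; a direct inspection of dihedral groups shows that such an element exists only when $m(s,t)<\infty$, in which case it is forced to be the longest element $\alpha_{s,t}=\alpha_{t,s}$, of length $m(s,t)$. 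Factoring it out gives exactly $u=\alpha_{s,t}\,u'$ with the asserted additivity. The care required here is to invoke only those facts about coset factorizations and the Exchange Condition that are logically prior to the Word Property; alternatively, one can prove the Key Lemma directly by an induction on $\ell(u)$ that repeatedly applies the Exchange Condition to build the alternating prefix one letter at a time, avoiding the parabolic machinery at the cost of somewhat longer bookkeeping.
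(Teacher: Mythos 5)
The paper does not prove Theorem~\ref{tits} at all: it is recalled as background with a citation to Tits \cite{Tit} and to \cite[Theorem~3.3.1]{BB}, so there is no internal argument to compare yours against; your proposal has to be judged against the standard proofs in those sources, and judged so it is correct and follows the classical route. The induction proving the first assertion is sound: the equal-first-letter case invokes the inductive hypothesis only for $su$, which has length $n-1$, and the distinct-first-letter case reduces to it through the pair of reduced words $\alpha_{s,t}\mathbf{u}'$ and $\alpha_{t,s}\mathbf{u}'$ produced by your Key Lemma, which differ by a single braid-move. The deduction of the second assertion is also the standard one: minimality of $j$ forces $x_1\cdots x_{j-1}$ to be reduced with $x_j\in D_R(x_1\cdots x_{j-1})$; the right Exchange Condition gives $x_1\cdots x_{j-1}=x_1\cdots\widehat{x_i}\cdots x_{j-1}x_j$ with both sides reduced, so the first assertion lets you braid the prefix into a form containing the factor $x_jx_j$, and the nil-move drops the word length by $2$, so the induction terminates. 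The only genuinely delicate point is the one you flag yourself: avoiding circularity in the Key Lemma. Your way of handling it works, because every ingredient you invoke --- the Exchange Condition, the factorization $u={}_Ju\cdot{}^Ju$ with $\ell(u)=\ell({}_Ju)+\ell({}^Ju)$ (the left version of Proposition~\ref{fattorizzo}), the descent transfer $r\in D_L(u)\Leftrightarrow r\in D_L({}_Ju)$ for $r\in J$ (which follows by applying the uniqueness of that factorization to $ru$), and the fact that in a dihedral group an element with two left descents exists only when $m(s,t)<\infty$ and must then be the longest element $\alpha_{s,t}=\alpha_{t,s}$ --- is provable from the exchange/deletion properties alone, as in \cite[\S 1.10]{Hum}, hence is logically prior to the Word Property. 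One cosmetic remark: you denote the $W_{\{s,t\}}$-part of the left factorization by $w_J$, which clashes with the paper's use of $w_J$ for the $W_J$-part of the \emph{right} factorization; writing ${}_Ju$ as in the paper would avoid confusion.
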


The {\em Bruhat graph}
of $W$ (see \cite{Dye2}, or, e.g., \cite[\S 2.1]{BB} or \cite[\S 8.6]{Hum}) is the directed graph
having $W$ as vertex set and having a directed edge from $u$ to $v$ if and only if $u^{-1}v \in T$ and $\ell (u)<\ell (v)$.  The transitive closure of the Bruhat graph of $W$
is a partial order on $W$ that is usually called the {\em Bruhat order} 
(see, e.g.,  \cite[\S 2.1]{BB} or \cite[\S 5.9]{Hum}) and that we denote by $\leq$.
Throughout this work, we always assume that $W$, and its subsets, are partially ordered by $\leq$.  
There is a well known characterization of Bruhat order
on a Coxeter group (usually referred to as the {\em Subword Property})
that we will use repeatedly in this work,
often without explicit mention. We recall it here for the reader's
convenience (a proof of it can be found, e.g., in \cite[\S 2.2]{BB} or \cite[\S 5.10]{Hum}).
By a {\em subword} of a word $s_{1}s_{2} \cdots s_{q}$ we mean
a word of the form
$s_{i_{1}}s_{i_{2}} \cdots s_{i_{k}}$, where $1 \leq i_{1}< \cdots
< i_{k} \leq q$.
\begin{thm}[Subword Property]
\label{subword}
Let $u,w \in W$. Then the following are equivalent:
\begin{itemize}
\item $u \leq w$ in the Bruhat order,
\item  every reduced expression for $w$ has a subword that is 
a reduced expression for $u$,
\item there exists a  reduced expression for $w$ having a subword that is 
a reduced expression for $u$.
\end{itemize}
\end{thm}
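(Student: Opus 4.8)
The equivalence is a cycle of three implications, of which (2)$\Rightarrow$(3) is immediate since $w$ admits at least one reduced expression. The substance is therefore (1)$\Rightarrow$(2) and (3)$\Rightarrow$(1), and both will rest on the same engine, the \emph{Lifting Property}, a standard consequence of the Strong Exchange Property and hence of the basic combinatorics of $(W,S)$ encoded in the Word Property (Theorem~\ref{tits}). Concretely I would first isolate two facts: (L1) for each $s\in S$, left multiplication by $s$ is an order isomorphism from $\{x\in W:\ell(sx)>\ell(x)\}$ onto $\{x\in W:\ell(sx)<\ell(x)\}$; and (L2) if $u\leq w$ with $s\in D_L(w)\setminus D_L(u)$, then $u\leq sw$. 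These are exactly the places where the group (as opposed to merely poset) structure is used, and once they are in hand the rest is bookkeeping on reduced words.

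For (3)$\Rightarrow$(1) I would induct on $\ell(w)$. Fix a reduced expression $w=s_1\cdots s_q$ having a reduced subword equal to $u$, and set $s=s_1$ and $w'=s_2\cdots s_q$, a reduced expression of length $q-1$. If the chosen subword does not use the letter $s_1$, then $u$ is a reduced subword of $w'$, so $u\leq w'$ by the inductive hypothesis; since $w'^{-1}w=s_q\cdots s_2\,s_1\,s_2\cdots s_q\in T$ and $\ell(w')<\ell(w)$, there is a Bruhat-graph edge $w'\to w$, whence $w'\leq w$ and $u\leq w$. If instead the subword uses $s_1$, write $u=s_1u'$ with $u'$ a reduced subword of $w'$, so that $u'\leq w'$ by induction; as $\ell(s_1u')>\ell(u')$ and $\ell(s_1w')>\ell(w')$, applying (L1) gives $s_1u'\leq s_1w'$, that is $u\leq w$.

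For (1)$\Rightarrow$(2) I would again induct on $\ell(w)$, now starting from an arbitrary reduced expression $w=s_1\cdots s_q$ and putting $s=s_1\in D_L(w)$, $w'=s_2\cdots s_q$. If $s\in D_L(u)$, then by the inverse isomorphism in (L1) we have $su\leq sw=w'$; the inductive hypothesis applied to $w'$ exhibits $su$ as a subword of the reduced word $s_2\cdots s_q$, and prepending $s_1$ displays $u=s_1(su)$ as a reduced subword of $w$. If $s\notin D_L(u)$, then (L2) gives $u\leq w'$, and the inductive hypothesis exhibits $u$ as a subword of $s_2\cdots s_q$, hence of $w$. Since this argument applies to the first letter of \emph{any} reduced expression of $w$, the conclusion holds for every reduced expression, completing the cycle.

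The main obstacle is not the inductions, which are routine, but the establishment of the Lifting Property (L1)--(L2): this is the only step that genuinely uses the group structure, and proving it cleanly from the Word Property by way of the Strong Exchange Property is the crux. An alternative, more self-contained route to the equivalence of (2) and (3) worth recording would bypass Bruhat order entirely and use Theorem~\ref{tits} directly: since any two reduced expressions of $w$ are linked by braid-moves, it suffices to check that a single braid-move $\alpha_{s,s'}\mapsto\alpha_{s',s}$ preserves the existence of a reduced subword for $u$, a claim that reduces to the finite verification, inside the dihedral group $\langle s,s'\rangle$, that the two alternating words of length $m(s,s')$ admit reduced subwords for precisely the same set of elements.
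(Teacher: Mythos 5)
The paper itself contains no proof of Theorem~\ref{subword}: it is recalled as a classical fact, with the proof explicitly deferred to the cited references (\S 2.2 of Bj\"orner--Brenti or \S 5.10 of Humphreys), so the comparison is with the standard textbook argument, and your proposal is in essence a correct rendition of it. The trivial implication (2)$\Rightarrow$(3); the induction on $\ell(w)$ for (3)$\Rightarrow$(1), splitting on whether the chosen subword uses the first letter, with the unused case handled by the Bruhat-graph edge from $w'=s_2\cdots s_q$ to $w$ (indeed $w'^{-1}w=s_q\cdots s_2s_1s_2\cdots s_q\in T$ and $\ell(w')<\ell(w)$) and the used case by your order-isomorphism (L1); and the induction for (1)$\Rightarrow$(2) peeling the first letter of an \emph{arbitrary} reduced expression via (L1)/(L2), including the observation that prepending $s_1$ to a reduced subword for $s_1u$ yields a reduced subword for $u$ --- all of this is correct. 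Treating the Lifting Property (L1)--(L2) as a cited black box is consistent with the paper's own practice (it likewise only cites Deodhar, Bj\"orner--Brenti, and Humphreys for the Lifting Property), so this is not a gap in context. Two points of comparison: the proof in Bj\"orner--Brenti obtains (1)$\Rightarrow$(2) differently, by walking along a Bruhat-graph path from $u$ to $w$ and applying the Strong Exchange Property (plus the Deletion Property) at each edge, so your lifting-based induction is a mild but genuine variant; and your closing remark --- that (3)$\Rightarrow$(2) follows from the Word Property (Theorem~\ref{tits}) alone, because a single braid move preserves the existence of reduced subwords, by the finite dihedral check that every element of $W_{\{s,s'\}}$ occurs as a reduced subword of both $\alpha_{s,s'}$ and $\alpha_{s',s}$ --- is correct and gives an order-free proof of the equivalence of (2) and (3), though it cannot by itself connect these conditions to the Bruhat order in (1).
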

The Coxeter group $W$, partially ordered by Bruhat order, is a graded poset
having $\ell$ as its rank function.

For each subset $J\subseteq S$, we denote by  $W_J $ the parabolic subgroup of $W$ generated by $J$, and by  $W^{J}$ the set of minimal coset representatives:
$$W^{J} =\{ w \in W \, : \; D_{R}(w)\subseteq S\setminus J \}.$$
The following result is well known and a proof of it can be found, e.g., in \cite[\S 2.4]{BB} or \cite[\S 1.10]{Hum}.
\begin{pro}
\label{fattorizzo}
Let $J \subseteq S$. Then:
\begin{enumerate}
\item[(i)] 
every $w \in W$ has a unique factorization $w=w^{J} \cdot w_{J}$ 
with $w^{J} \in W^{J}$ and $w_{J} \in W_{J}$;
\item[(ii)] for this factorization, $\ell(w)=\ell(w^{J})+\ell(w_{J})$.
\end{enumerate}
\end{pro}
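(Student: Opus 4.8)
The plan is to realize $W^{J}$ as the set of minimum‑length representatives of the left cosets of $W_{J}$ and to deduce everything from one central length‑additivity lemma. Before starting I would record two facts that follow at once from the results already stated. First, from the Word Property (Theorem~\ref{tits}): any expression of an element of $W_{J}$ in the generators $J$ is turned into a reduced one by braid‑ and nil‑moves, and these moves never introduce a generator outside $J$ (a braid‑move replaces $\alpha_{s,s'}$ by $\alpha_{s',s}$ with $s,s'\in J$, and a nil‑move deletes $ss$ with $s\in J$); hence every $v\in W_{J}$ admits a reduced expression, of length $\ell(v)$, using only letters of $J$. Second, from the Subword Property (Theorem~\ref{subword}) I would extract the Exchange Property in the form I need: if $x=s_{1}\cdots s_{q}$ is reduced and $\ell(xs)<\ell(x)$ for some $s\in S$, then $xs<x$, so a reduced expression for $xs$ occurs as a subword of $s_{1}\cdots s_{q}$; since $\ell(xs)=q-1$, this subword is obtained by deleting exactly one letter $s_{i}$.

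\textbf{Existence.} Given $w\in W$, let $w^{J}$ be an element of minimum length in the coset $wW_{J}$ (such an element exists because the set of lengths is a nonempty subset of $\mathbb{N}$). If some $s\in J$ satisfied $\ell(w^{J}s)<\ell(w^{J})$, then $w^{J}s\in wW_{J}$ would be shorter, a contradiction; hence $D_{R}(w^{J})\cap J=\emptyset$, i.e.\ $w^{J}\in W^{J}$. Setting $w_{J}:=(w^{J})^{-1}w\in W_{J}$ yields the factorization $w=w^{J}\cdot w_{J}$.

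\textbf{Length additivity (the crux).} The key lemma I would prove is: if $u$ is the minimum‑length element of its coset $uW_{J}$, then $\ell(uv)=\ell(u)+\ell(v)$ for every $v\in W_{J}$. I would argue by induction on $\ell(v)$, the case $v=e$ being trivial. For $\ell(v)\ge 1$, write $v=v's$ with $s\in J\cap D_{R}(v)$ and $\ell(v')=\ell(v)-1$, and concatenate reduced expressions $u=s_{1}\cdots s_{p}$ and $v'=t_{1}\cdots t_{r-1}$ (with $t_{i}\in J$, by the first preliminary fact); by the inductive hypothesis $uv'=s_{1}\cdots s_{p}t_{1}\cdots t_{r-1}$ is reduced, of length $p+r-1$. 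It then suffices to rule out $\ell(uv)=\ell(uv')-1$. If this held, the Exchange Property would let me delete one letter from that reduced word to obtain $uv=uv's$. Were the deleted letter some $t_{j}$, then $v=u^{-1}(uv)=t_{1}\cdots\widehat{t_{j}}\cdots t_{r-1}$ would have length at most $r-2$, contradicting $\ell(v)=r$. Were it instead some $s_{i}$, then $uv=(s_{1}\cdots\widehat{s_{i}}\cdots s_{p})(t_{1}\cdots t_{r-1})$, so the element $s_{1}\cdots\widehat{s_{i}}\cdots s_{p}=uv(v')^{-1}$ lies in $uW_{J}$ and has length $p-1<p=\ell(u)$, contradicting the minimality of $u$. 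Hence $\ell(uv)=\ell(uv')+1=\ell(u)+\ell(v)$, and applying this with $v=w_{J}$ gives part~(ii).

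\textbf{Uniqueness.} Here I would first upgrade the lemma to the statement that each element of $W^{J}$ is the \emph{unique} minimum‑length element of its coset. If $u\in W^{J}$ and $u_{0}$ is the minimal element of $uW_{J}$, then $u_{0}\in W^{J}$ and $u=u_{0}v_{0}$ with $v_{0}\in W_{J}$; were $v_{0}\ne e$, choosing $s\in J\cap D_{R}(v_{0})$ and applying the lemma to $u_{0}$ with both $v_{0}$ and $v_{0}s$ would give $\ell(us)=\ell(u_{0}v_{0}s)=\ell(u)-1$, contradicting $u\in W^{J}$. Thus $u=u_{0}$. Finally, if $w=xy=x'y'$ with $x,x'\in W^{J}$ and $y,y'\in W_{J}$, then $x$ and $x'$ are both the minimum‑length representative of the common coset $wW_{J}$, so $x=x'$ and therefore $y=x^{-1}w=(x')^{-1}w=y'$. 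The main obstacle is precisely the length‑additivity lemma, and within it the second case of the Exchange step, where producing a shorter coset representative is exactly what forces the factorization to be length‑additive; existence and uniqueness are then routine.
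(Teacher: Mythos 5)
Your proof is correct. Note that the paper itself gives no proof of Proposition~\ref{fattorizzo}: it records the statement as well known and refers to \cite{BB} and \cite{Hum}. Your argument --- taking a minimal-length coset representative, proving length additivity $\ell(uv)=\ell(u)+\ell(v)$ by induction via the exchange property (which you correctly extract from the Subword Property), and then deducing uniqueness from the uniqueness of the minimal representative --- is essentially the standard proof found in those references, so there is no divergence of approach to report; the only ingredients you use beyond the paper's stated results are routine facts such as $\ell(ws)=\ell(w)\pm 1$, which is acceptable background.
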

There are, of course, left versions of the above definition and
result. Namely, if we let  
\begin{equation}
^{J} \! W = \{ w \in W \, : \; D_{L}(w)\subseteq S\setminus J \} =(W^{J})^{-1}, 
\end{equation}
then every $w \in W$ can be uniquely factorized $
w=\, _{J} w\,  \cdot \,  ^{J}\! w$, where $_{J} w \in W_{J}$, $^{J} \!
w \in \,  ^{J} W$,
and $\ell(w)=\ell(_{J} w )+\ell(^{J} \! w)$.

The following is a well known result (see, e.g., \cite[Lemma~7]{Hom74}).
\begin{pro}
\label{unicomax}
Let \( J\subseteq S \) and $w \in W$.  The set 
$ W_{J}\cap [e,w] $ has a unique  maximal element  \( w_0(J) \),
so that \( W_{J}\cap [e,w]$ is the interval $[e,w_0(J)] \). 
\end{pro}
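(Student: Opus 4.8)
The plan is to argue by induction on $\ell(w)$, peeling off a right descent $s$ (so $ws<w$) and controlling how $W_J\cap[e,w]$ relates to $W_J\cap[e,ws]$ via the elementary behaviour of right multiplication by $s$. Two standard facts will drive everything. First, by the Subword Property (Theorem~\ref{subword}), $W_J$ is a lower set for the Bruhat order: if $z\in W_J$ and $y\le z$ then $y\in W_J$, since a reduced word for $z$ uses only letters of $J$ and hence so does every subword. Consequently, once a maximum of $W_J\cap[e,w]$ is produced and christened $w_0(J)$, the identity $W_J\cap[e,w]=[e,w_0(J)]$ is immediate: $[e,w_0(J)]\subseteq W_J$ by downward closure, and $[e,w_0(J)]\subseteq[e,w]$ because $w_0(J)\le w$. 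So the entire content reduces to the \emph{existence} of the maximum. Second, for $s\in S$ I write $x{\uparrow}s:=\max\{x,xs\}$ and $x{\downarrow}s:=\min\{x,xs\}$ (the longer and the shorter of the two, which are always Bruhat-comparable), and I will use that both operators are order-preserving, i.e. $x\le y$ implies $x{\uparrow}s\le y{\uparrow}s$ and $x{\downarrow}s\le y{\downarrow}s$. This is the standard monotonicity of raising/lowering, a routine consequence of the Lifting Property of the Bruhat order, and I would either cite it or dispatch it by the four sign-cases of $s$ relative to $x$ and $y$.

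For the induction, the case $w=e$ is trivial. For $\ell(w)\ge 1$ I fix $s\in D_R(w)$, so $ws<w$, and by induction $W_J\cap[e,ws]=[e,M]$ with $M$ the unique maximum of $W_J\cap[e,ws]$. If $s\notin J$, I claim $W_J\cap[e,w]=W_J\cap[e,ws]$, so that $M$ serves unchanged. Indeed, for $x\in W_J$ the generator $s\notin J$ cannot be a right descent of $x$, so $s\in D_R(w)\setminus D_R(x)$, and the Lifting Property upgrades $x\le w$ to $x\le ws$; the reverse inclusion is trivial. This settles $s\notin J$.

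The crux is the case $s\in J$. Here I propose to take $N:=M{\uparrow}s$ as the candidate maximum; note $N\in W_J$, since $s\in J$ forces $Ms\in W_J$. That $N\le w$ follows from monotonicity of ${\uparrow}s$: from $M\le ws<w$ we get $M\le w$, hence $N=M{\uparrow}s\le w{\uparrow}s=w$, the last equality because $s\in D_R(w)$. To see that $N$ dominates an arbitrary $x\in W_J\cap[e,w]$, apply monotonicity of ${\downarrow}s$ to $x\le w$ to obtain $x{\downarrow}s\le w{\downarrow}s=ws$; since $x{\downarrow}s\in W_J$, the inductive hypothesis gives $x{\downarrow}s\le M$. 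Applying monotonicity of ${\uparrow}s$ and using $(x{\downarrow}s){\uparrow}s=x{\uparrow}s$ (because $\{x,xs\}=\{x{\downarrow}s,x{\uparrow}s\}$) yields $x\le x{\uparrow}s=(x{\downarrow}s){\uparrow}s\le M{\uparrow}s=N$. Thus $N$ is the maximum, and setting $w_0(J):=N$ closes the induction; the interval description then follows from the downward-closure remark above.

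The one delicate point, and the step I would be most careful about, is precisely this case $s\in J$: the argument hinges on chaining the two monotonicities in the right order, namely lowering $x$ across $s$, comparing with $M$ inside $[e,ws]$, and then raising back across $s$, together with the bookkeeping identity $(x{\downarrow}s){\uparrow}s=x{\uparrow}s$. The monotonicity of the raising and lowering operators is standard but is the real engine of the proof, so the main obstacle is not conceptual depth but making sure these elementary facts (and the Lifting Property they rest on) are invoked with the correct descent hypotheses at each step.
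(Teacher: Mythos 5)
Your proof is correct, but it cannot be compared line-by-line with the paper's own argument for a simple reason: the paper gives none. Proposition~\ref{unicomax} is quoted as a well-known result with a citation to van den Hombergh [Hom74, Lemma~7], so your self-contained induction on $\ell(w)$ is a genuinely different (and more elementary) route. Your two pillars are sound: downward closure of $W_J$ follows from the Subword Property (Theorem~\ref{subword}) once one knows that \emph{every} reduced expression of an element of $W_J$ uses only letters of $J$ --- a fact worth stating explicitly, and which follows from the Word Property (Theorem~\ref{tits}) because braid- and nil-moves never introduce new letters; and the monotonicity of $x\mapsto\max(x,xs)$ and $x\mapsto\min(x,xs)$ is indeed a routine consequence of the Lifting Property. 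The case split on $s\in D_R(w)$ works as you describe: for $s\notin J$ one needs the small observation (which you make) that $s$ cannot be a right descent of any $x\in W_J$, so the Lifting Property applies (note $x\neq w$ is automatic there, since $s\in D_R(w)\setminus D_R(x)$); for $s\in J$ the lower--compare--raise chain $x\le x{\uparrow}s=(x{\downarrow}s){\uparrow}s\le M{\uparrow}s=N$ is valid, and $N\in W_J$, $N\le w$ hold as you argue, so $N$ is the greatest element. What the two approaches buy: the paper's citation keeps the exposition short, whereas your argument makes the statement self-contained using only tools the paper has already recorded (Word Property, Subword Property, Lifting Property), at the cost of having to verify the standard monotonicity lemmas.
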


\subsection{Special matchings}
Given $x,y$ in a partially ordered set $ P$, 
we say that $y$ {\em covers} $x$ and we write $x \lhd y$ if the interval $[x,y]$ coincides with $\{x,y\}$.
An element $z \in [x,y]$ is said to be an {\em atom} (respectively, a {\em coatom})
of $[x,y]$ if $x \lhd z$ (respectively, $z \lhd y$). We say that a poset $P$ is {\em graded} if $P$ has a minimum and there is a function
$\rho : P \rightarrow {\mathbb N}$ (the {\em rank function}
of $P$) such that $\rho (\hat{0})=0$ and $\rho (y) =\rho (x)
+1$ for all $x,y \in P$ with $x \lhd y$. 
(This definition is slightly different from the one given in \cite{StaEC1}, but is
more convenient for our purposes.) 
The {\em Hasse diagram} of $P$ is the graph having $P$ as vertex set and $ \{ \{ x,y \} \in \binom {P}{2} : \text{ either $x \lhd y$
 or $y \lhd x$} \}$ as edge set.

A {\em matching} of a poset \( P \) is an involution
\( M:P\rightarrow P \) such that \( \{v,M(v)\}\) is an edge in the Hasse diagram of $P$, for all \( v\in V \).
A matching \( M \) of \( P \) is {\em special} if\[
u\lhd v\Longrightarrow M(u)\leq M(v),\]
 for all \( u,v\in P \) such that \( M(u)\neq v \).

The two simple results in the following lemma will be often used without explicit mention (see  \cite[Lemmas~2.1 and 4.1]{BCM1}). Given a poset $P$, two matchings $M$ and $N$ of  $P$, and $u\in P$, we denote by  $\langle M, N \rangle (u) $ the orbit of $u$ under the action of the subgroup of the symmetric group on $P$ generated by $M$ and $N$. We call an interval $[u, v]$ in a poset $P$ {\em dihedral} if it is isomorphic to a finite Coxeter system of rank 2 ordered
by Bruhat order.
\begin{lem}
\label{res}
Let \( P \) be a graded poset. 
\begin{enumerate}
\item Let $M$ be a special matching of $P$, and $u,v\in P$ be such that 
$M(v)\lhd v$ and $M(u)\rhd u$. Then $M$ restricts to a special matching of the interval $[u,v]$.
\item Let  $M$ and $N$ be two special matchings of $P$. Then, for all $u \in P$, the orbit $\langle M, N \rangle (u) $ is a dihedral interval.
\end{enumerate} 
\end{lem}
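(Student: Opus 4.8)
For the first part, the crux is to show that $M$ maps the interval $[u,v]$ into itself; once this is established, the fact that the restriction $M|_{[u,v]}$ is again an involution pairing Hasse edges is immediate, and the defining inequality of a special matching is inherited verbatim, since a cover in $[u,v]$ is a cover in $P$. The plan is to prove the auxiliary statement: \emph{if $M(v)\lhd v$ and $z\le v$, then $M(z)\le v$}. I would argue this by induction on the corank $\rho(v)-\rho(z)$. The base case $z=v$ is the hypothesis $M(v)<v$. If $z<v$, I pick $z'$ with $z\lhd z'\le v$ (possible since $P$ is graded); by induction $M(z')\le v$. If $M(z)=z'$ we are done, and otherwise the special property applied to the cover $z\lhd z'$ gives $M(z)\le M(z')\le v$. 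The dual statement, \emph{if $M(u)\rhd u$ and $z\ge u$ then $M(z)\ge u$}, then follows by running the same argument in the dual poset $P^{*}$, after observing that $M$ is a special matching of $P^{*}$ as well (the defining implication is self-dual under the order-reversing involution, because $M(x)\neq y\iff M(y)\neq x$). Applying both statements to $z\in[u,v]$ yields $u\le M(z)\le v$, which is exactly the required stability.

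For the second part, I would write $G=\langle M,N\rangle$ for the subgroup of the symmetric group on $P$ generated by $M$ and $N$; since both are involutions, $G$ is dihedral and the orbit $O=\langle M,N\rangle(u)$ carries the alternating action obtained by applying $M,N$ in turn. First I would choose $z\in O$ of minimal rank; minimality forces $M(z)\rhd z$ and $N(z)\rhd z$. From $z$ there issue two alternating chains, obtained by applying $M,N,M,\dots$ and $N,M,N,\dots$ respectively, and the plan is to show that each chain strictly ascends until it reaches a common maximum $\hat{w}$, that both $M$ and $N$ send $\hat{w}$ down, and hence that $O=[z,\hat{w}]$ with exactly two elements at each intermediate rank, i.e. the Bruhat poset of a rank-two Coxeter system.

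The monotonicity is the engine of the argument. I would observe that, walking along the orbit, the first place where an up-step is immediately followed by a down-step can only be a \emph{local maximum}, namely an element decreased by both $M$ and $N$: indeed, if $c$ is reached by an up-step via one matching and left by a down-step via the other, then both matchings send $c$ downward. Symmetrically, down-steps persist until a local minimum. This is to be combined with the local computation near $z$: if $z$ is a local minimum with $M(z)\neq N(z)$, then from $z\lhd M(z)$ and $N(z)\rhd z$ (with $N(z)\neq M(z)$) the special property of $N$ yields $N(z)\le N(M(z))$, which by rank considerations forces $M(z)\lhd N(M(z))$, so both chains genuinely begin to ascend. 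Together these give the unimodal rank profile of the orbit.

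The hard part will be proving that the two ascending chains from $z$ actually \emph{meet}, equivalently that the orbit is finite and has a \emph{unique} local maximum (a single peak in the profile). This is precisely where the special-matching hypothesis must be exploited globally rather than one step at a time: I must rule out a profile with several alternating peaks and valleys and show that the $M$-first and $N$-first chains issuing from $z$ terminate at the same element $\hat{w}$. Once the common maximum and the two saturated chains of equal length are in hand, identifying $O$ with a dihedral interval, by checking that the induced order coincides with the Bruhat order on $I_2(m)$ with $2m=|O|$, is a routine verification of covers and comparabilities.
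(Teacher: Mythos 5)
Your part (1) is correct and complete: the induction on $\rho(v)-\rho(z)$ along a saturated chain, combined with the observation that the special-matching condition is self-dual (because $M(x)\neq y \iff M(y)\neq x$ for an involution), gives exactly the stability $u\le M(z)\le v$, and the restriction is then trivially special. Note that the paper itself offers no proof of this lemma --- it cites \cite[Lemmas 2.1 and 4.1]{BCM1} --- and your argument for (1) is essentially the standard one.

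Part (2), however, has a genuine gap, which you yourself flag: you never prove that the two alternating chains issuing from the minimal element $z$ merge, and that is the entire content of the statement. The missing idea is a \emph{strengthened induction that tracks the crossing covers}, and contrary to your remark it works one step at a time. Set $a_0=b_0=z$, let $a_k$ be the result of applying $M,N,M,\dots$ alternately to $z$ ($k$ times) and $b_k$ the result of the chain starting with $N$. Inductive hypothesis: if $a_k\neq b_k$, then $\rho(a_k)=\rho(b_k)=\rho(z)+k$ and \emph{each} of $a_k,b_k$ covers \emph{both} $a_{k-1}$ and $b_{k-1}$. Because the two chains alternate in opposite phase, the matching $Y$ to be applied next to $a_k$ is precisely the one with $Y(b_{k-1})=b_k$. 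Apply the special property of $Y$ to the crossing cover $b_{k-1}\lhd a_k$: this is legitimate since $Y(b_{k-1})=b_k\neq a_k$, and it yields $b_k\le Y(a_k)=a_{k+1}$. Moreover $a_{k+1}\neq b_k$ (otherwise $a_k=Y(b_k)=b_{k-1}$, impossible by ranks), so $b_k<a_{k+1}$; since $\rho(a_{k+1})\le\rho(a_k)+1$, this forces $\rho(a_{k+1})=\rho(z)+k+1$, i.e. $a_{k+1}\rhd a_k$ \emph{and} $a_{k+1}\rhd b_k$; symmetrically $b_{k+1}$ covers both $a_k$ and $b_k$. Hence a chain can stop ascending only when $a_k=b_k$: there is no ``profile with several peaks'' to rule out globally, and no descent before merging. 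Since in the intended application $P=[e,w]$ is finite, ranks are bounded and some $a_m=b_m$ occurs; at that element both matchings go down, the orbit closes up as $\{z,a_1,b_1,\dots,a_{m-1},b_{m-1},a_m\}$, and the covers just established identify its induced order with the Bruhat order of $I_2(m)$. Without this strengthened hypothesis your outline stalls exactly where you say it does, so as written the proposal does not prove (2).
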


\subsection{Special matchings in Coxeter groups}

Let $(W,S)$ be a Coxeter system and recall that the Bruhat order is a  partial order on $W$.
 For $w\in W$, we say that $M$ is a matching of $w$ if $M$ is a matching of the lower Bruhat interval $[e,w]$.
If \( s\in D_{R}(w) \) (respectively, $s\in D_{L}(w)$) we define a matching 
\(\rho_{s}  \) (respectively, $\lambda_{s}$) of $w$ 
by \( \rho_{s}(u)=us \) (respectively, 
$\lambda_{s}(u)=su$) for all \( u\leq w \). From 
the ``Lifting Property'' (see, e.g., \cite[Theorem~1.1]{Deo77},  \cite[Proposition~2.2.7]{BB} or \cite[Proposition~5.9]{Hum}), it easily follows that $\rho_s$ 
(respectively, $\lambda_s$) is a special matching of $w$. 
We call a matching $M$ of $w$ a \emph{left multiplication
matching} if there exists \( s \in S \) such that  \( M=\lambda _{s} \)
on $[e,w]$, and we call it a  \emph{right multiplication
matching} if there exists \( s \in S \) such that  \( M=\rho _{s} \)
on $[e,w]$.

We recall the following  result, which will be needed in the proof of the main result of this work (see \cite[Lemma~4.3]{BCM1} for a proof).
\begin{pro}
\label{4.3}
Given a Coxeter system  $(W,S)$ and an element  $w\in W$, let $M$ and $N$ be two special matchings of $w$ such that $M(e)=s$ and $N(e)=t$, with $s \neq t$. Let $u \leq w$. Then the lower dihedral interval $[e,w]\cap W_{\{s,t\}}$ contains  an orbit of $\langle M,N \rangle$ having the same cardinality as the orbit  $\langle M,N\rangle(u)$ of $u$.
\end{pro}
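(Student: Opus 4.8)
The plan is to reduce the statement to an equality of orbit \emph{ranks} and then to locate the required orbit inside $D:=[e,w]\cap W_{\{s,t\}}$. First I would record the ambient structure. By Proposition~\ref{unicomax} the set $D$ equals the interval $[e,w_0(\{s,t\})]$, and since $W_{\{s,t\}}$ is the dihedral group generated by $s$ and $t$ and every lower interval of a dihedral group is isomorphic to the Bruhat poset of a finite rank-two Coxeter system, $D$ is a dihedral interval, of rank $n:=\ell(w_0(\{s,t\}))$ (note $n\geq 2$, as $s,t\leq w$). On the other hand, by Lemma~\ref{res}(2) the orbit $O:=\langle M,N\rangle(u)$ is a dihedral interval, and because $M$ and $N$ are fixed-point-free involutions its cardinality equals twice its rank. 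Thus the cardinality of an orbit is determined by its rank, and it suffices to produce an $\langle M,N\rangle$-orbit contained in $D$ whose rank equals the rank $k$ of $O$; equivalently, writing $k$ for the order of the rotation $MN$ on $O$, I must realize this same order by an orbit contained in $D$.

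The first substantive step is to show that $\langle M,N\rangle$ stabilizes $D$ and that $M,N$ restrict to special matchings of the dihedral interval $D$. The containment $\langle M,N\rangle(e)\subseteq D$ can be obtained directly: $M(e)=s$ and $N(e)=t$ lie in $W_{\{s,t\}}$, and an induction using the special-matching inequality together with the Subword Property (Theorem~\ref{subword}) forces each newly created covering element to be an alternating word in $s,t$, since at a given rank the only elements lying above the two relevant atoms are such alternating words. Upgrading this to stabilization of all of $D$, and not merely of the orbit of $e$, is where I would invoke the algebraic properties of special matchings of a lower Bruhat interval established in \cite{Mchara}. Once $M,N$ restrict to $D$, the matchings on a dihedral interval are highly constrained, so the ranks (equivalently the orders of $MN$) of the finitely many orbits of $\langle M,N\rangle$ inside $D$ can be computed explicitly; these orbits genuinely exhibit a range of ranks, as one already sees when $M$ and $N$ are one left and one right multiplication matching, where $D$ splits into orbits of several sizes.

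To pass from a general $u$ to $D$ I would argue by induction on the rank $\rho(a)$ of the minimum $a$ of the orbit $O=[a,b]$. If $a=e$ then $O=\langle M,N\rangle(e)\subseteq D$ and we are done. If $a\neq e$, the goal is to produce a second orbit of the same rank $k$ whose minimum has strictly smaller rank, so that iteration terminates at an orbit contained in $D$. Here I would use Lemma~\ref{res}(1) to restrict $M$ and $N$ to suitable subintervals and slide the rank-$k$ dihedral configuration $[a,b]$ down by one rank along a covering $a'\lhd a$, checking that the order of $MN$ on the slid copy is unchanged; this keeps the orbit a dihedral interval of rank $k$ while lowering the rank of its minimum.

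The main obstacle is this sliding step, together with the stabilization claim underlying it. Unlike the multiplication matchings $\lambda_s,\rho_s$, an abstract special matching need not be given by group multiplication, so the special-matching inequality controls only comparabilities and not, by itself, the order of $MN$ on an orbit; guaranteeing that this order is preserved under the descent, and hence that every rank occurring among the orbits of $[e,w]$ already occurs among the orbits of $D$, is exactly the point where the finer structure theory of \cite{Mchara} and \cite{BCM1} is needed. I expect the crux to be controlling the interaction of $M$ and $N$ near the minimum $a$ — precisely, showing that the two atoms $M(a)$ and $N(a)$ behave like the two generators of a dihedral subinterval based at $a$.
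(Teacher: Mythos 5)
Your preparatory reductions are sound, but they are the easy half of the problem: the cardinality of an orbit is twice its rank as a dihedral interval (Lemma~\ref{res}(2)), $D=[e,w]\cap W_{\{s,t\}}=[e,w_0(s,t)]$ is dihedral, and $\langle M,N\rangle(e)\subseteq D$ (your induction works because, the orbit of $e$ being a dihedral interval, each of its elements of rank $i+1$ covers both of its elements of rank $i$, and by the Subword Property the only length-$(i+1)$ elements above both alternating words of length $i$ in $s,t$ are the two alternating words of length $i+1$). Note also that the paper itself contains no proof of Proposition~\ref{4.3}: it is recalled from \cite[Lemma~4.3]{BCM1}, and the stabilization fact you credit to \cite{Mchara} is likewise quoted in this paper from \cite[Proposition~5.3]{BCM1}. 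So all the substance has to be in the step you leave open.

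That step --- the descent on the rank of the orbit minimum --- is not merely unproven (you flag it yourself as ``the main obstacle''); it is false as stated. You want: if the minimum $a$ of $O=\langle M,N\rangle(u)$ is not $e$, then there is an orbit of $\langle M,N\rangle$ of the same rank whose minimum has strictly smaller rank, and iteration ends at the orbit of $e$. If such a descent always existed, every orbit would have the same cardinality as the orbit of $e$, which is wrong. Take $(W,S)$ of type $A_2$, $S=\{s,t\}$, $w=sts=tst$, $M=\rho_s$, $N=\lambda_t$, so that $M(e)=s$ and $N(e)=t$. The orbits are $\{e,s,t,ts\}$, of cardinality $4$, and $\{st,sts\}$, of cardinality $2$ with minimum $st$ of length $2$; there is no orbit of cardinality $2$ with a shorter minimum, so no descent from $\{st,sts\}$ is possible. (The proposition holds there only because $\{st,sts\}$ already lies in $D$.) Consequently the iteration must terminate on the condition ``the orbit lies in $D$'', not ``the minimum is $e$'', and with that correction your descent claim becomes essentially the proposition itself, for which you offer no argument beyond an appeal to unspecified structure theory from \cite{Mchara} and \cite{BCM1}; moreover, the ``slid copies'' produced by restricting $M$ and $N$ via Lemma~\ref{res}(1) need not be orbits of $\langle M,N\rangle$ at all, which is what the statement requires. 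This is a genuine gap: the actual proof (that of \cite[Lemma~4.3]{BCM1}) must produce an orbit inside $D$ of the prescribed cardinality directly, and none of your steps substitutes for it.
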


Let $(W,S)$ be a Coxeter system and $w\in W$.
By Proposition~\ref{unicomax},  the intersection of the lower Bruhat interval $[e,w]$ with the dihedral parabolic subgroup $W_{\{s,t\}}$ 
generated by any two given generators $s,t\in S$ has a maximal element; for short, we denote it by $w_0(s,t)$ instead of $w_0(\{s,t\})$. 

We give the following symmetric definitions.
\begin{defi}
A  right system for $w$ is a quadruple $(J,s,t,M_{st})$ such that:
\begin{enumerate}

\item[R1.] $J\subseteq S$, $s\in J$, $t\in S\setminus J$, and $M_{st}$ is a special matching of $w_0(s,t)$ such that  $M_{st}(e)=s$ and  $M_{st}(t)=ts$;

\item[R2.] $(u^{J})^{\{s,t\}}\, \cdot \, M_{st} \Big ((u^{J})_{\{s,t\}} \, \cdot \, _{\{s\}} (u_{J})\Big )\, \cdot \,
 ^{\{s\}}(u_{J}) \leq w$,  for all $u\leq w$;

 \item[R3.] 
if $r\in J$ and $r \leq w^J$, then $r$ and $s$ commute;

\item[R4.]
\label{ddddxxxx}
\begin{enumerate}
\item  if $s\leq (w^J)^{\{s,t\}}$ and  $t\leq (w^J)^{\{s,t\}}$, then $M_{st}= \rho_s$,
\item   if $s\leq (w^J)^{\{s,t\}}$ and  $t\not \leq (w^J)^{\{s,t\}}$, then $M_{st}$ commutes with $\lambda_s$,
\item   if $s\not\leq (w^J)^{\{s,t\}}$ and  $t \leq (w^J)^{\{s,t\}}$, then $M_{st}$ commutes with $\lambda_t$;
\end{enumerate}

\item[R5.] if $v\leq w$ and $s\leq \, ^{\{s\}} (v_{J})$, then $M_{st}$ commutes with $\rho_s$ on $[e,v]\cap  [e,w_0(s,t)]=[e,v_0(s,t)]$.
\label{pure}

\end{enumerate} 

\end{defi}

\begin{defi}
A left system for $w$ is a quadruple $(J,s,t,M_{st})$ such that:

\begin{enumerate}
\item[L1.] $J\subseteq S$, $s\in J$, $t\in S\setminus J$, and $M_{st}$  is a special matching of $w_0(s,t)$ such that $M_{st}(e)=s$ and  $M_{st}(t)=st$;

\item[L2.] $    (_Ju)^{\{s\}} \, \cdot \, M_{st} \Big( \, (_Ju)_{\{s\}}  \,  \cdot  \,  _{\{s,t\}} (^J  u) \Big) \, 
\cdot \, ^{\{s,t\}}(^J u)    \leq w$,  for all $u\leq w$;
 \item[L3.] 
if $r\in J$ and $r \leq \, ^Jw$, then $r$ and $s$ commute;

\item[L4.]
\label{puresx}
\begin{enumerate}
\item  if $s\leq \, ^{\{s,t\}}(^Jw)$ and  $t\leq \, ^{\{s,t\}}(^Jw)$, then $M_{st}= \lambda_s$,
\item   if $s\leq \, ^{\{s,t\}}(^Jw)$ and  $t\not \leq \, ^{\{s,t\}}(^Jw)$, then $M_{st}$ commutes with $\rho_s$,
\item   if $s\not\leq \, ^{\{s,t\}}(^Jw)$ and  $t \leq \, ^{\{s,t\}}(^Jw)$, then $M_{st}$ commutes with $\rho_t$;
\end{enumerate}

\item[L5.] if $v\leq w$ and $s\leq  (_{J}v)^{\{s\}}$, then $M_{st}$ commutes with $\lambda_s$ on $[e,v]\cap  [e,w_0(s,t)]=[e,v_0(s,t)]$.
\label{sxsx}

\end{enumerate} 
\end{defi}

Given a right system for $w$, the matching $M$ associated with it is the matching of $w$ acting in the following way: 
for all $u\leq w$, 
$$M(u) = (u^{J})^{\{s,t\}}\, \cdot \, M_{st} \Big( (u^{J})_{\{s,t\}} \,  \cdot  \,  _{\{s\}} (u_{J}) \Big) \, 
\cdot \, ^{\{s\}}(u_{J}).$$

Note that $M$ acts as $\lambda_s$ on $[e, w_0(s,r)]$ for all $r\in J$, and as $\rho_s$ on $[e, w_0(s,r)]$ for all $r\in S\setminus (J\cup \{t\})$; moreover, if $s\in D_R(w)$, for the trivial choises $J= \{s\}$ and $M_{st}= \rho_s$, we obtain right multiplication matchings ($M = \rho_s$ on the entire interval $[e,w]$).

Symmetrically, given a  left system for $w$, the matching $M$ associated with it is the matching of $w$ acting in the following way: 
for all $u\leq w$,
 $$M(u) = (_Ju)^{\{s\}} \, \cdot \, M_{st} \Big( \, (_Ju)_{\{s\}}  \,  \cdot  \,  _{\{s,t\}} (^J  u) \Big) \, 
\cdot \, ^{\{s,t\}}(^J u).$$
We obtain left multiplication matchings as special cases.

We comment that distinct systems for $w$ might give rise to the same matching of $w$.
 
The following result is needed in the proofs of the main results of this work (see \cite{Mchara}).
\begin{thm}
\label{caratteri}
Let $w$ be any element of any arbitrary Coxeter group $ W$ and $M$ be a special matching of $w$. Then $M$ is  associated with a right or a left system of $w$.  
\end{thm}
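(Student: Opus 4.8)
The plan is to reverse-engineer, from an abstract special matching $M$ of $w$, the data $(J,s,t,M_{st})$ of a right or a left system, and then to verify that $M$ coincides with the matching associated with it. Since $e\lhd M(e)$ and the atoms of $[e,w]$ are precisely the generators $r\in S$ with $r\le w$, I may set $s:=M(e)\in S$; this will be the distinguished generator of the system. Both $\lambda_s$ and $\rho_s$ also send $e$ to $s$, and the whole strategy is to measure, one generator at a time, how $M$ deviates from these two ``pure'' matchings.

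The core of the argument is a local analysis on the dihedral lower intervals $[e,w_0(s,r)]=[e,w]\cap W_{\{s,r\}}$, one for each generator $r\le w$ with $r\ne s$. First I would classify the special matchings of a dihedral interval that send $e$ to $s$: besides $\lambda_s$ and $\rho_s$, when $m(s,r)$ is even there are additional \emph{mixed} special matchings, and the freedom in the choice of $M_{st}$ is designed precisely to accommodate these. The main structural tool here is Proposition~\ref{4.3}, applied to $M$ together with a multiplication matching $\rho_r$ (for a right descent $r\ne s$) or $\lambda_r$ (for a left descent $r\ne s$): it forces the joint orbits to be dihedral and to be faithfully modelled by orbits sitting inside $[e,w_0(s,r)]$, while Lemma~\ref{res} controls how $M$ restricts. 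This is what lets me read off the behaviour of $M$ ``in the direction of $r$'' from the dihedral interval, and conversely pin $M$ down from its restrictions. The directions $r$ that are not descents of $w$ carry no multiplication partner and must instead be controlled through the factorizations of Proposition~\ref{fattorizzo}.

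On this basis I would let $J$ consist of $s$ together with those generators $r$ in whose direction $M$ behaves as $\lambda_s$, let the generators in whose direction $M$ behaves as $\rho_s$ make up $S\setminus(J\cup\{t\})$, and single out the exceptional direction as $t$; whether $M$ on $[e,w_0(s,t)]$ sends $t$ to $ts$ or to $st$ decides whether I build a right or a left system, with $M_{st}$ the corresponding dihedral matching. A crucial preliminary lemma must establish that there is \emph{at most one} exceptional direction, so that $t$ is well defined and $M$ is honestly $\lambda_s$ or $\rho_s$ in every other direction. This is where the special property of $M$, combined with the Word and Subword Properties (Theorems~\ref{tits} and~\ref{subword}) and the factorizations of Proposition~\ref{fattorizzo}, is used most heavily to propagate agreement from the dihedral intervals to an arbitrary $u\le w$.

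It then remains to verify the axioms for the reconstructed quadruple. Conditions R1 and R4 (respectively L1 and L4) simply repackage the dihedral classification above, while R2 (respectively L2) is immediate once one knows that the reconstruction formula computes $M(u)$, since $M$ maps $[e,w]$ into itself. The genuinely delicate part consists of the commutation axioms R3--R5 (respectively L3--L5): one must show that the generators $r\in J$ below $w^J$ commute with $s$, and that $M_{st}$ commutes with the appropriate multiplication matchings on the relevant sub-intervals. I expect this compatibility check to be the main obstacle, since it requires a careful case analysis, organized exactly along the trichotomy of R4/L4 according to whether $s$ and $t$ lie below $(w^J)^{\{s,t\}}$, together with repeated use of Proposition~\ref{4.3} to compare orbits across $[e,w]$. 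Once these commutations are secured, a final induction on $\ell(w)$, peeling off the generator $s$, confirms that the matching associated with the reconstructed system agrees with $M$ on all of $[e,w]$.
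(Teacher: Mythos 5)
Your proposal cannot be measured against the paper's own argument, because the paper does not prove Theorem~\ref{caratteri} at all: it is imported as a black box from the preprint \cite{Mchara}, and everything in Sections~3--4 merely uses it. So your sketch has to stand on its own, and as it stands it is a table of contents rather than a proof: it correctly identifies the natural skeleton (set $s=M(e)$, restrict $M$ to the lower dihedral intervals $[e,w_0(s,r)]$, sort the directions $r$ into a ``$\lambda_s$ side'' $J$, a ``$\rho_s$ side'', and one exceptional direction $t$, then verify the system axioms), but every step that carries actual mathematical weight is asserted instead of established.

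Three deferrals are genuine gaps. First, the ``crucial preliminary lemma'' that at most one direction is exceptional, and that in all other directions $M$ honestly restricts to $\lambda_s$ or $\rho_s$, is precisely the combinatorial core of the theorem; you give no argument for it, and Proposition~\ref{4.3} alone cannot supply one, since it only compares orbit \emph{cardinalities} and, moreover, requires a second special matching $N$ of $w$ with $N(e)\neq M(e)$ --- a partner ($\rho_r$ or $\lambda_r$) that simply does not exist in directions $r$ that are not descents of $w$. Second, the final ``induction on $\ell(w)$, peeling off the generator $s$'' is not available: $s=M(e)$ need not be a descent of $w$, as the mixed matching on $[e,tst]$ in type $B_2$ shows ($M(e)=s$, $M(t)=ts$, $M(st)=tst$, while $D_L(tst)=D_R(tst)=\{t\}$), so there is no $\lambda_s$ or $\rho_s$ on $[e,w]$ to peel with; the induction that does exist runs over subintervals $[e,v]$ with $M(v)\lhd v$ (Lemma~\ref{res}(1)), but then one must show how a system for the restriction of $M$ to $[e,v]$ assembles into a system for $[e,w]$, and that propagation argument --- the heart of the theorem --- is absent. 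Third, you concede that the verification of R3--R5 (resp.\ L3--L5) is ``the main obstacle'' and leave it entirely open, and your claim that R2 is ``immediate'' presupposes the propagation step, so even the logical order of your verification would need to be rearranged. In short, the strategy is plausible and is presumably close in spirit to what \cite{Mchara} does, but none of the theorem's substance is present in the proposal.
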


\subsection{Kazhdan--Lusztig polynomials}
In introducing the (ordinary and parabolic) $R$-polynomials and Kazhdan--Lusztig polynomials, among all the equivalent definitions, we choose the combinatorial ones, since they suit our purposes best.

Given a Coxeter system $(W,S)$ and $H \subseteq S$,  we consider the set of minimal coset representatives  $W^{H}$ as a poset with the partial ordering induced by the Bruhat order on $W$. Given $u,v \in W^{H}$, $u \leq v$, we let
\[ [u,v]^{H} = \{ z \in W^{H}: \;  u \leq z \leq v \} ,\] be the (parabolic)  interval  in  
$W^{H}$ with bottom element $u$ and top element $v$.

The following two results are due to Deodhar, and we refer to
\cite[\S \S 2-3]{Deo87} for their proofs.
\begin{thm}
\label{7.1}
Let $(W,S)$ be a Coxeter system, and $H \subseteq S$.
 Then, for each $x \in \{ -1,q \}$,
  there is a unique family of polynomials $\{ R_{u,v}^{H,x}(q)
\} _{u,v \in W^{H}} \subseteq {\bf Z}[q]$ such that, for all $u,v \in W^{H}$:
\begin{enumerate}
\item $R^{H,x}_{u,v}(q)=0$ if $u \not \leq v$;
\item $R^{H,x}_{u,u}(q)=1$;
\item if $u<v$ and $s \in D_{L}(v)$, then
\[ R_{u,v}^{H,x} (q)= \left\{ \begin{array}{ll}
R_{su,sv}^{H,x}(q), & \mbox{if $s \in D_{L}(u)$,} \\
(q-1)R_{u,sv}^{H,x}(q)+qR_{su,sv}^{H,x}(q), & \mbox{if $s \notin D_{L}(u)$
and $su \in W^{H}$,} \\
(q-1-x)R_{u,sv}^{H,x}(q), & \mbox{if $s \notin D_{L}(u)$ and $su \notin
W^{H}$.}
\end{array} \right. \]
\end{enumerate}
\end{thm}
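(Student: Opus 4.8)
The plan is to treat uniqueness by a short induction and existence by realizing the recurrence inside the Hecke algebra $\mathcal{H}$ of $(W,S)$, letting associativity handle the consistency. Everything rests on one elementary combinatorial fact: if $v\in W^{H}$ and $s\in D_{L}(v)$, then $sv\in W^{H}$ as well (were it not, choosing $r\in H$ with $svr<sv$ and using $vr>v$ gives $\ell(svr)=\ell(v)-2$ while $\ell(vr)=\ell(v)+1$, so $\ell(s\cdot vr)=\ell(vr)-3$, contradicting that left multiplication by $s$ changes length by $\pm1$). This guarantees that for $v\neq e$ a left descent exists and that every term on the right of (3) is again indexed in $W^{H}$. \emph{Uniqueness} then follows at once by induction on $\ell(v)$: (1) and (2) settle $v=e$, and for $v\neq e$ any $s\in D_{L}(v)$ expresses $R^{H,x}_{u,v}$ through the $R^{H,x}_{\cdot,sv}$ with $\ell(sv)<\ell(v)$, already determined. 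The content is therefore \emph{existence}, equivalently the self-consistency of (3) across the choices of $s$.

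For existence I would construct Deodhar's parabolic module $M=M^{H,x}$, the free $\mathbb{Z}[q,q^{-1}]$-module on $\{m_{w}\}_{w\in W^{H}}$ with
\[
T_{s}m_{w}=\begin{cases}
m_{sw}, & \text{if } sw\in W^{H}\text{ and } sw>w,\\
q\,m_{sw}+(q-1)\,m_{w}, & \text{if } sw\in W^{H}\text{ and } sw<w,\\
x\,m_{w}, & \text{if } sw\notin W^{H}
\end{cases}
\]
(by the fact above, $sw\notin W^{H}$ forces $sw>w$). First I would verify the quadratic relation $T_{s}^{2}=(q-1)T_{s}+q$ on each $m_{w}$; the two non-folding cases are direct, and the folding case reduces to $x^{2}=(q-1)x+q$, i.e. $(x-q)(x+1)=0$ --- precisely what singles out $x\in\{-1,q\}$. \textbf{The main obstacle is the braid relations.} When $W_{H}$ is finite I would bypass them by realizing $M$ as a left ideal: $M^{H,q}\cong\mathcal{H}\cdot\big(\sum_{z\in W_{H}}T_{z}\big)$ and $M^{H,-1}\cong\mathcal{H}\cdot\big(\sum_{z\in W_{H}}(-q)^{-\ell(z)}T_{z}\big)$ via $m_{w}\mapsto T_{w}\cdot(\,\cdots)$, whereupon associativity in $\mathcal{H}$ furnishes the module axioms for free and the displayed action is recovered by a short computation using $T_{r}\cdot(\sum_{z}T_{z})=q(\sum_{z}T_{z})$, respectively $T_{r}\cdot(\sum_{z}(-q)^{-\ell(z)}T_{z})=-(\sum_{z}(-q)^{-\ell(z)}T_{z})$, for $r\in H$. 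For general $H$ no such shortcut is available and one must check $T_{s}T_{t}T_{s}\cdots=T_{t}T_{s}T_{t}\cdots$ directly on each $m_{w}$; this localizes to the dihedral subgroup $\langle s,t\rangle$ and becomes a finite case analysis driven by the position of $w$ relative to $W^{H}$ in that subgroup. This dihedral bookkeeping is the genuine heart of the matter and the step I would budget the most effort for.

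Finally, with $M$ a bona fide module, I would introduce the semilinear involution $\iota$ on $M$ (with $\overline{q}=q^{-1}$) determined by $\iota(m_{e})=m_{e}$ and $\iota(T_{s}\xi)=T_{s}^{-1}\iota(\xi)$, its existence and involutivity descending from the bar-involution on $\mathcal{H}$. Triangularity yields $\iota(m_{v})=\sum_{u\le v}c_{u,v}\,m_{u}$, and defining $R^{H,x}_{u,v}$ as the suitably normalized coefficients $c_{u,v}$ gives (1) from the support condition and (2) from the fixed leading term. To recover (3), for $s\in D_{L}(v)$ I would start from $m_{v}=T_{s}m_{sv}$, apply $\iota$ to get $\iota(m_{v})=T_{s}^{-1}\iota(m_{sv})$, substitute $T_{s}^{-1}=q^{-1}T_{s}+(q^{-1}-1)$, and expand against the three branches of the $T_{s}$-action; collecting the coefficient of each $m_{u}$ reproduces exactly the three cases of (3), with the scalar $x$ in the folding branch accounting for the coefficient $q-1-x$.
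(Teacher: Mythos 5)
The paper itself contains no proof of this statement: it is Deodhar's theorem, quoted with a citation to \cite[\S\S 2--3]{Deo87}, so the comparison here is against Deodhar's argument, which your proposal essentially reconstructs. Most of what you write is sound: the closure fact that $v\in W^{H}$ and $s\in D_{L}(v)$ imply $sv\in W^{H}$ is correct as you prove it (and it is exactly what makes both the recursion and the uniqueness induction well posed); the uniqueness argument is complete; the observation that the quadratic relation on the folding branch forces $x^{2}=(q-1)x+q$, i.e.\ $x\in\{-1,q\}$, is right; and deriving (3) by applying the bar involution to $m_{v}=T_{s}m_{sv}$ and substituting $T_{s}^{-1}=q^{-1}T_{s}+(q^{-1}-1)$ is the standard, correct way to finish.

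The genuine gap is precisely where you place ``the genuine heart of the matter'': existence of the module $M^{H,x}$ for \emph{arbitrary} $H$. Your left-ideal realization is valid but only when $W_{H}$ is finite --- for infinite $W_{H}$ the element $\sum_{z\in W_{H}}T_{z}$ does not exist in $\mathcal{H}$ --- and the fallback you offer (checking the braid relations on each $m_{w}$ by a dihedral case analysis) is not carried out; it is also subtler than you suggest, since which elements of the coset $\langle s,t\rangle w$ lie in $W^{H}$ depends on how that coset meets $W_{H}$-cosets, not on dihedral combinatorics alone. The missing idea, and what Deodhar actually does, is to define $M^{H,x}$ as the induced module $\mathcal{H}\otimes_{\mathcal{H}_{H}}\mathbb{Z}[q,q^{-1}]_{x}$, where $\mathcal{H}_{H}$ is the subalgebra spanned by $\{T_{z}\}_{z\in W_{H}}$ (canonically the Hecke algebra of $(W_{H},H)$) and $\mathbb{Z}[q,q^{-1}]_{x}$ is its one-dimensional representation $T_{r}\mapsto x$: this representation exists because scalars satisfy the braid relations trivially and the quadratic relation is exactly $x^{2}=(q-1)x+q$. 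Since $\mathcal{H}$ is free as a right $\mathcal{H}_{H}$-module with basis $\{T_{w}\}_{w\in W^{H}}$ (by Proposition~\ref{fattorizzo}, $w=w^{H}\cdot w_{H}$ with additive lengths), the elements $m_{w}=T_{w}\otimes 1$ form a basis, your displayed three-case formula becomes a computation rather than an axiom, and associativity of the $\mathcal{H}$-action is automatic --- no braid-relation check, no finiteness hypothesis. Equivalently, $M^{H,x}\cong\mathcal{H}\big/\sum_{r\in H}\mathcal{H}(T_{r}-x)$, and since $\overline{T_{r}-q}=q^{-1}(T_{r}-q)$ and $\overline{T_{r}+1}=q^{-1}(T_{r}+1)$, this left ideal is bar-stable, so your involution $\iota$ descends and the rest of your argument (triangularity, extraction of (1)--(3)) goes through verbatim for arbitrary $H$.
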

In the sequel, we will  often use the inductive formula of Theorem~\ref{7.1} without explicit mention.
\begin{thm}
\label{7.2}
Let $(W,S)$ be a Coxeter system, and $H \subseteq S$.
Then, for each $x \in \{ -1,q \}$,
 there is a unique family of polynomials $\{ P^{H,x}_{u,v}(q) \} 
_{u,v \in  W^{H}}  \subseteq {\bf Z} [q]$, such that, for all 
$u,v \in W^{H}$:
\begin{enumerate}
\item $P^{H,x}_{u,v}(q)=0$ if $u \not \leq v$;
\item $P^{H,x}_{u,u}(q)=1$;
\item deg$(P^{H,x}_{u,v}(q)) \leq  \frac{1}{2}\left(
\ell(v)-\ell(u)-1 \right) $, if $u < v$;
\item 
\( q^{\ell(v)-\ell(u)} \, P^{H,x}_{u,v} \left( \frac{1}{q} \right)
= \sum _{z\in[u,v]_H}   R^{H,x}_{u,z}(q) \,
P^{H,x}_{z,v}(q). \)
\end{enumerate}
\end{thm}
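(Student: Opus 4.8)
The plan is to prove existence and uniqueness together, by a downward induction on the integer $d:=\ell(v)-\ell(u)$, taking the parabolic $R$-polynomials of Theorem~\ref{7.1} as given and imitating the classical Kazhdan--Lusztig argument on the parabolic module. For uniqueness, fix $x$ and suppose two families satisfy (1)--(4). The cases $u\not\le v$ and $u=v$ are forced by (1) and (2). For $u<v$ I isolate the $z=u$ summand of (4) (using $R^{H,x}_{u,u}=1$) and rewrite it as
\[
q^{d}\,P^{H,x}_{u,v}(1/q)-P^{H,x}_{u,v}(q)=\sum_{z\in(u,v]^{H}}R^{H,x}_{u,z}(q)\,P^{H,x}_{z,v}(q)=:B(q).
\]
Every summand on the right involves a pair with $\ell(v)-\ell(z)<d$, hence is already determined by the inductive hypothesis, so $B$ is pinned down. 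By (3) the polynomial $P^{H,x}_{u,v}$ is supported in degrees $\le\lfloor(d-1)/2\rfloor$, so $q^{d}P^{H,x}_{u,v}(1/q)$ is supported in degrees $\ge d-\lfloor(d-1)/2\rfloor>\lfloor(d-1)/2\rfloor$. The two terms on the left occupy disjoint degree ranges, and therefore $-P^{H,x}_{u,v}(q)$ must be the part of $B(q)$ of degree $\le\lfloor(d-1)/2\rfloor$. This forces $P^{H,x}_{u,v}$, proving uniqueness.

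For existence, I define the family by the same recursion: $P^{H,x}_{u,v}:=0$ for $u\not\le v$, $P^{H,x}_{u,u}:=1$, and for $u<v$ set $P^{H,x}_{u,v}(q):=-\,[B]_{<d/2}$, the truncation of $B$ to monomials of degree $<d/2$. Conditions (1)--(3) hold by construction. The entire content is (4): writing $A:=P^{H,x}_{u,v}+B=\sum_{z\in[u,v]^{H}}R^{H,x}_{u,z}P^{H,x}_{z,v}$, one wants $A(q)=q^{d}P^{H,x}_{u,v}(1/q)$. Since $A=[B]_{\ge d/2}$ by the choice of $P^{H,x}_{u,v}$, a short degree bookkeeping (which also forces the middle coefficient to vanish when $d$ is even) shows that (4) is \emph{equivalent} to the single anti-palindromy
\[
B(q)=-\,q^{d}\,B(1/q).
\]
I would deduce this from two ingredients: the inductive validity of (4) for the pairs $(z,v)$ with $z>u$, i.e. $q^{\ell(v)-\ell(z)}P^{H,x}_{z,v}(1/q)=\sum_{y\in[z,v]^{H}}R^{H,x}_{z,y}P^{H,x}_{y,v}$, and the parabolic inversion formula
\[
\sum_{z\in[u,y]^{H}}R^{H,x}_{u,z}(q)\,q^{\ell(y)-\ell(z)}\,R^{H,x}_{z,y}(1/q)=\delta_{u,y},
\]
which expresses that the reciprocal involution $\beta\colon f\mapsto q^{\ell(v)-\ell(\cdot)}f(1/q)$ conjugates the $R$-matrix $\mathcal R$ to its own inverse, i.e. $(\mathcal R\beta)^{2}=I$. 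Substituting the first relation into the reciprocal $q^{d}B(1/q)$ and collapsing the resulting double sum by the inversion formula (in its left-inverse form) returns exactly $-B(q)$, the required anti-palindromy; this closes existence.

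The hard part will be establishing the inversion formula itself, which is the genuine new content over the ordinary case. I would prove it by induction on $\ell(y)-\ell(u)$, choosing $s\in D_{L}(y)$ and splitting the sum along the three branches of the recursion in Theorem~\ref{7.1}(3). The delicate point, absent from the non-parabolic theory, is the third branch: the factor $q-1-x$, occurring precisely when $sz\notin W^{H}$, must be reconciled with the two branches where $sz\in W^{H}$. Here I would use that $y\in W^{H}$ with $s\in D_{L}(y)$ forces $sy\in W^{H}$ (else $(sy)s'<sy$ for some $s'\in H$ would give $ys'<y$, contradicting $D_{R}(y)\cap H=\emptyset$), and a case analysis, via the Lifting Property, of whether left multiplication by $s$ carries $z$ out of $W^{H}$. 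Controlling the third-branch contribution so that the induction closes uniformly for both $x=-1$ and $x=q$ is, I expect, the technical heart of the argument.
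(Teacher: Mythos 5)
First, a point of comparison: the paper itself gives no proof of Theorem~\ref{7.2} --- it is Deodhar's theorem, quoted as background with an explicit pointer to \cite[\S\S 2--3]{Deo87}. Deodhar's proof there is algebraic: he realizes $W^{H}$ as indexing a basis of a module induced from a one-dimensional representation of the parabolic Hecke algebra, transports the bar involution to that module, and reads off existence and uniqueness of the $P^{H,x}_{u,v}$ from the self-dual basis. Measured against that, your proposal is an attempted combinatorial translation, and most of it is sound: the uniqueness half is complete (the degree bound (3) puts $P^{H,x}_{u,v}(q)$ and $q^{d}P^{H,x}_{u,v}(1/q)$ in disjoint degree ranges, so $P^{H,x}_{u,v}$ is forced to be minus the low-degree truncation of $B$), and the existence half is correctly reduced: with $P^{H,x}_{u,v}:=-[B]_{<d/2}$, condition (4) is indeed equivalent to the anti-palindromy $B(q)=-q^{d}B(1/q)$, and your collapse of the double sum --- substituting the inductively valid instances of (4) for the pairs $(z,v)$ and then applying the inversion formula, legitimately usable on either side since the matrix $\bigl(R^{H,x}_{u,z}\bigr)$ is unitriangular --- does yield exactly that identity.

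The genuine gap is the inversion formula
\begin{equation*}
\sum_{z\in[u,y]^{H}}R^{H,x}_{u,z}(q)\,q^{\ell(y)-\ell(z)}\,R^{H,x}_{z,y}(1/q)=\delta_{u,y},
\end{equation*}
which you state and use but do not prove. This is not a side lemma: it is precisely the combinatorial transcription of the fact that Deodhar's bar operator on the induced module squares to the identity, which is where all the actual content of the theorem sits; everything you build around it is formal manipulation of unitriangular matrices. Your sketched induction (``split along the three branches of Theorem~\ref{7.1}(3)'') is not obviously closable as stated: in the third branch the element $sz$ lies outside $W^{H}$, hence outside the summation set, so the term-by-term pairing that drives the classical $H=\emptyset$ induction is unavailable, and the cancellation of the $(q-1-x)$ contributions must at some point invoke the quadratic relation $x^{2}=q+(q-1)x$ (compare the remark following the proof of Theorem~\ref{doppiamente-allacciati}, where the same relation is indispensable in exactly this kind of third-branch computation). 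Until that lemma is established --- by carrying out this delicate induction in full, by constructing Deodhar's module and its involution, or by expanding the parabolic $R$-polynomials in terms of ordinary ones and invoking the ordinary inversion formula --- your argument proves Theorem~\ref{7.2} only conditionally.
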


The polynomials $R^{H,x}_{u,v}(q)$ and $P^{H,x}_{u,v}(q)$ 
are called  the {\em
parabolic $R$-polynomials} and {\em parabolic  Kazhdan--Lusztig polynomials} of $W^{H}$ of type $x$. 
For $H=\emptyset$, $R_{u,v}^{\emptyset ,-1}(q)$ ($=R_{u,v}^{\emptyset ,q}(q)$) and $
P_{u,v}^{\emptyset ,-1}(q)$ ($=P_{u,v}^{\emptyset ,q}(q)$)
 are the ordinary $R$-polynomials $R_{u,v}(q)$ and Kazhdan--Lusztig polynomials $P_{u,v}(q)$  of $W$.
Another relationship between the parabolic Kazhdan--Lusztig polynomials and their
ordinary counterparts is established by the following result (see \cite[Proposition 3.4, and Remark 3.8]{Deo87}).
\begin{pro}
\label{7.3}
Let $(W,S)$ be a Coxeter system, $H \subseteq S$, and $u,v \in W^{H}$.
Then we have that
\[ P_{u,v}^{H,q}(q)=\sum _{w \in W_{H}}(-1)^{l(w)}P_{uw,v}(q) .\]
Furthermore, if $W_{H}$ is finite, then
\[ P_{u,v}^{H,-1}(q)=P_{uw_{0}^{H},vw_{0}^{H}}(q) , \]
where $w_{0}^{H}$ is the longest element of $W_H$. 
\end{pro}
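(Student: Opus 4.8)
The plan is to deduce both equalities from the uniqueness statement of Theorem~\ref{7.2}: I introduce, for $u,v\in W^H$, the candidate families
\[
F_{u,v}(q):=\sum_{w\in W_H}(-1)^{\ell(w)}P_{uw,v}(q),
\qquad
G_{u,v}(q):=P_{uw_0^H,\,vw_0^H}(q),
\]
and verify that $\{F_{u,v}\}$ satisfies the four characterizing properties of $\{P^{H,q}_{u,v}\}$ and $\{G_{u,v}\}$ those of $\{P^{H,-1}_{u,v}\}$; the stated identities then follow. Properties (1)--(3) are immediate. By Proposition~\ref{fattorizzo}, $u\in W^H$ is the Bruhat-minimum of its coset and $\ell(uw)=\ell(u)+\ell(w)$ for $w\in W_H$; hence $u\not\le v$ forces $uw\not\le v$ for all $w$, so $F_{u,v}=0$, only $w=e$ survives in $F_{u,u}$, giving $1$, and $\ell(uw)\ge\ell(u)$ yields the degree bound. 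For $G$ I would instead use that, $W_H$ being finite, $v\mapsto vw_0^H$ is a bijection of $W^H$ onto the set of maximal coset representatives with $\ell(vw_0^H)=\ell(v)+\ell(w_0^H)$, that it is an order isomorphism onto its image (so $u\le v\iff uw_0^H\le vw_0^H$ for $u,v\in W^H$), and that it preserves length differences; these give (1)--(3) for $G$ at once.

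The substance is property~(4), the duality $q^{\ell(v)-\ell(u)}(\,\cdot\,)(1/q)=\sum_{z\in[u,v]^H}R^{H,x}_{u,z}(\,\cdot\,)$, and for it I would first establish the companion identities for the parabolic $R$-polynomials,
\[
R^{H,q}_{u,z}(q)=\sum_{w\in W_H}(-q)^{\ell(w)}R_{uw,z}(q),
\qquad
R^{H,-1}_{u,z}(q)=\sum_{w\in W_H}R_{uw,z}(q),
\]
each proved by checking that its right-hand side obeys the three-case recurrence of Theorem~\ref{7.1} and then invoking uniqueness. The relevant combinatorial inputs are the Lifting Property, the factorization of Proposition~\ref{fattorizzo}, and the behaviour of a left descent $s$ of $z$ relative to the coset $uW_H$ (whether $su\in W^H$). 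I emphasize that, in contrast to the clean shift for the Kazhdan--Lusztig polynomials, the $x=-1$ relation is genuinely a sum over the whole coset: already for $(W,S)$ of type $A_2$ with $H=\{s_1\}$ one has $R^{H,-1}_{e,s_1s_2}=q^2-q\ne(q-1)^2=R_{s_1,\,s_1s_2s_1}$, so no term-by-term identification with $R_{uw_0^H,\,zw_0^H}$ is available.

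With the $R$-identities in hand, I would substitute them together with the ordinary duality $q^{\ell(v)-\ell(uw)}P_{uw,v}(1/q)=\sum_{uw\le y\le v}R_{uw,y}P_{y,v}$ into property~(4) and reorganize the resulting multiple sums by grouping every summation index according to the minimal representative of its coset. I expect this reorganization to be the main obstacle: the signed weights $(-1)^{\ell(w)}$ and $(-q)^{\ell(w)}$ (for $x=q$), respectively the equal weights (for $x=-1$), must conspire with the ordinary $R$- and $P$-polynomials so that all contributions indexed by non-minimal representatives collapse onto the parabolic sum over $[u,v]^H$. This cannot be a mere restriction of the ordinary identity, because the Bruhat interval between two maximal coset representatives need not consist of maximal representatives --- in the $A_2$ example above, $[s_1,\,s_1s_2s_1]$ contains $s_1s_2$, which is not maximal in its coset --- so a genuine cancellation among the excess terms is required. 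Once this collapse is verified, property~(4) holds and the uniqueness in Theorem~\ref{7.2} completes both proofs.

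A more conceptual alternative, which makes these cancellations automatic, is Deodhar's Hecke-algebra argument: realize the parabolic module as the induction of the character $T_s\mapsto q$ (for $x=q$) or $T_s\mapsto-1$ (for $x=-1$) of the parabolic Hecke subalgebra, check that the canonical projection of the Hecke algebra onto this module intertwines the two bar-involutions, and conclude that it sends the Kazhdan--Lusztig basis element $C'_v$ to the corresponding parabolic basis element; reading off the coordinates in the standard bases then yields both formulas. The obstacle there shifts to verifying bar-equivariance of the projection and pinning down the normalizing powers of $q$, but the combinatorial route above stays closer to the framework used in the rest of this paper.
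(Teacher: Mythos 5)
First, a point of context: the paper itself does not prove Proposition~\ref{7.3} --- it quotes it from \cite[Proposition 3.4 and Remark 3.8]{Deo87}, and Deodhar's argument there is essentially the Hecke-module/bar-involution argument that you relegate to your final paragraph as a ``conceptual alternative''. So your combinatorial plan has to stand on its own, and as written it does not. The framework is fine: properties (1)--(3) are correctly dispatched for both $F$ and $G$ (the length additivity $\ell(uw)=\ell(u)+\ell(w)$, the impossibility of $uw=v$ for $w\neq e$ when $v\in W^H$, and the order isomorphism $z\mapsto zw_0^H$ all do what you say), and the two parabolic $R$-identities you state are true. But property (4), which you yourself identify as ``the substance'', is never verified: the argument stops at ``once this collapse is verified, property (4) holds''. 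That collapse is where all the content of the proposition lies, so this is a genuine gap rather than a verification safely left to the reader.

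Concretely, here is what is missing. For $x=q$: substituting ordinary duality into $q^{\ell(v)-\ell(u)}F_{u,v}(1/q)$ produces $\sum_{y\in[u,v]}\bigl(\sum_{w\in W_H}(-q)^{\ell(w)}R_{uw,y}\bigr)P_{y,v}$, where $y$ runs over the \emph{whole} interval, while your identity evaluates the inner sum only when $y\in W^H$. What the regrouping actually requires is the stronger identity $\sum_{w\in W_H}(-q)^{\ell(w)}R_{uw,y}=(-1)^{\ell(y_H)}R^{H,q}_{u,y^H}$ for every $y\in W$ (with $y=y^H\cdot y_H$ as in Proposition~\ref{fattorizzo}); this can be deduced from your restricted identity by pairing $w$ with $ws$ for $s\in H\cap D_R(y)$ and using the right-handed $R$-recurrence, but that step is absent, and without it the two sides of (4) do not match up. For $x=-1$ the gap is worse, because the identity you propose to substitute is not the one the computation calls for: expanding $q^{\ell(v)-\ell(u)}G_{u,v}(1/q)$ gives $\sum_{y\in[uw_0^H,vw_0^H]}R_{uw_0^H,y}\,P_{y,vw_0^H}$, whose first index is the \emph{maximal} representative $uw_0^H$ and whose second index runs over non-maximal elements, whereas $R^{H,-1}_{u,z}=\sum_{w}R_{uw,z}$ has minimal representatives in both slots and cannot be inserted into this expression. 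To collapse that sum coset by coset one needs two facts you never invoke: the descent invariance $P_{y,v'}=P_{yh,v'}$ for $h\in D_R(v')$, which is what allows $P_{y,vw_0^H}=P_{y^Hw_0^H,vw_0^H}$ and lets the $P$-factor be pulled out of each coset, and the coset-sum identity $\sum_{w\in W_H}R_{uw_0^H,zw}=R^{H,-1}_{u,z}$, which is a different statement from yours and needs its own inductive proof. Your instinct that ``a genuine cancellation among the excess terms is required'' is exactly right; proving that cancellation \emph{is} the proposition, and it is the part the proposal leaves undone.
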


We refer to \cite{BB}, \cite{Deo87}, and \cite{Hum}  for more details concerning general Coxeter group theory and parabolic Kazhdan--Lusztig polynomials.

\section{Commuting special matchings} 

In this section, we prove existence results for special matchings commuting with a given one. These results are needed in Section \ref{parabolici}.

We will make repeated use of the following easy result. 
\begin{lem}
\label{commutano}
Let $(W,S)$ be any arbitrary Coxeter system.
Two special matchings $M$ and $N$ of $w\in W$ commute if and only if they commute on the lower dihedral intervals  containing $M(e)$ and $N(e)$ (in particular, on $[e,w_0(s,t)]$ if $s=M(e), t= N(e), s \neq t$).
\end{lem}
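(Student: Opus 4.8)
The plan is to prove Lemma~\ref{commutano} by reducing the question of whether $M$ and $N$ commute everywhere on $[e,w]$ to the question of whether they commute on the dihedral orbits, invoking Lemma~\ref{res}(2). The forward implication is trivial: if $M$ and $N$ commute on all of $[e,w]$, then in particular they commute on the dihedral intervals containing $M(e)$ and $N(e)$, so there is nothing to do. The content is in the converse, so I would assume $M$ and $N$ commute on the relevant lower dihedral intervals and deduce they commute globally.

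First I would dispose of the degenerate case $M(e)=N(e)$. Here I expect that agreeing on the single edge forces the matchings to coincide on enough structure; more usefully, when $s=M(e)=N(e)$ the relevant dihedral orbits are the rank-one intervals $\{u,M(u)\}$, and commutation on these is essentially automatic, so I would handle this quickly and focus on $s=M(e)\neq N(e)=t$. In that main case, the key structural input is Lemma~\ref{res}(2): for every $u\in[e,w]$, the orbit $\langle M,N\rangle(u)$ is a dihedral interval, i.e.\ isomorphic to a rank-2 Coxeter system under Bruhat order. The point of passing to such an orbit is that $M$ and $N$ restrict to the two special matchings of a finite dihedral poset, and on a dihedral interval the two special matchings are completely rigid: a dihedral interval of rank $m$ is a ``diamond/hexagon/\ldots'' whose two special matchings are exactly the two maximal chains' worth of multiplication maps, and whether they commute is determined solely by the parity of $m$ (they commute iff the orbit has the shape where $(MN)^2=\mathrm{id}$, i.e.\ iff $m\le 2$, and otherwise $\langle M,N\rangle$ is the full dihedral group of order $2m$).

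The crucial reduction step, and the one I expect to be the main obstacle, is to show that the \emph{type} (isomorphism class, equivalently the cardinality $2m$) of an arbitrary orbit $\langle M,N\rangle(u)$ is controlled by the orbits living inside the single lower dihedral interval $[e,w_0(s,t)]$. This is precisely what Proposition~\ref{4.3} supplies: since $M(e)=s\neq t=N(e)$, for every $u\le w$ the interval $[e,w]\cap W_{\{s,t\}}=[e,w_0(s,t)]$ contains an orbit of $\langle M,N\rangle$ of the \emph{same cardinality} as $\langle M,N\rangle(u)$. Combined with Lemma~\ref{res}(2), equal cardinality of two dihedral orbits forces them to be isomorphic as posets-with-two-special-matchings, so $M$ and $N$ commute on $\langle M,N\rangle(u)$ if and only if they commute on the matching orbit inside $[e,w_0(s,t)]$. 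Thus commutation on $[e,w_0(s,t)]$ transfers orbit-by-orbit to commutation on the corresponding orbit through $u$.

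Finally I would assemble these pieces. Take arbitrary $u\le w$; I want $MN(u)=NM(u)$, which is a statement purely about the action of $M$ and $N$ on the orbit $\langle M,N\rangle(u)$. By the reduction above, this orbit is isomorphic (as a dihedral interval equipped with its two special matchings $M,N$) to an orbit $\mathcal{O}\subseteq[e,w_0(s,t)]$, and under this isomorphism the actions of $M$ and $N$ correspond; hence $M,N$ commute on $\langle M,N\rangle(u)$ iff they commute on $\mathcal{O}$. Since by hypothesis $M$ and $N$ commute on all of $[e,w_0(s,t)]$, they commute on $\mathcal{O}$, and therefore on $\langle M,N\rangle(u)$, giving $MN(u)=NM(u)$. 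As $u$ was arbitrary, $M$ and $N$ commute on $[e,w]$. The only subtlety to pin down carefully is that on a dihedral interval the two special matchings are genuinely unique and determined by their values at $e$, so that the cardinality-matching from Proposition~\ref{4.3} really does identify the commuting/non-commuting dichotomy; this rigidity of special matchings on dihedral intervals is the technical heart that makes the orbit-transfer argument valid.
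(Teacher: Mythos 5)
Your handling of the main case $M(e)=s\neq t=N(e)$ is correct and coincides with the paper's own (one-sentence) proof, which simply cites Proposition~\ref{4.3}: every $\langle M,N\rangle$-orbit is a dihedral interval by Lemma~\ref{res}(2); on an orbit of cardinality $2m$ the restriction of $MN$ has order $m$, so $M$ and $N$ commute on that orbit exactly when $m\leq 2$; both matchings stabilize $[e,w_0(s,t)]$; and Proposition~\ref{4.3} transfers each orbit cardinality into $[e,w_0(s,t)]$, so commutation there forces every orbit to have at most four elements. One small correction to your ``rigidity'' remark: a dihedral interval of rank at least $3$ carries four special matchings, two per atom (the analogues of $\lambda_a,\rho_a,\lambda_b,\rho_b$), so a special matching of it is \emph{not} determined by its value at the minimum; what makes the transfer work is not uniqueness but the cardinality criterion just stated, which uses only transitivity of $\langle M,N\rangle$ on the orbit.

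The genuine gap is the case $M(e)=N(e)$, which you dismiss with a false claim. It is not true that when $M(e)=N(e)=s$ the orbits are the rank-one sets $\{u,M(u)\}$, nor that commutation is automatic. Take $W$ of type $B_2$, $S=\{s,t\}$, $w=stst$, and let $M$ be the special matching with $M(e)=s$, $M(t)=ts$, $M(st)=tst$, $M(sts)=stst$ (exactly the matching $M_{st}$ described in the second case of Proposition~\ref{simply-lambda}). Then $M(e)=\lambda_s(e)=s$, but $M\lambda_s(t)=M(st)=tst$ while $\lambda_sM(t)=\lambda_s(ts)=sts$, so $M$ and $\lambda_s$ do not commute, and $\langle M,\lambda_s\rangle(t)$ is the six-element interval $[t,stst]$. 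Hence for $M(e)=N(e)=s$ the assertion that commutation on all the intervals $[e,w_0(s,r)]$, $r\in S\setminus\{s\}$, implies global commutation is a substantive statement requiring proof, and you cannot discard it: the paper applies Lemma~\ref{commutano} in precisely this form in the proofs of Propositions~\ref{simply-lambda} and \ref{double-rho} (the subcases $l=s$ and $r=s$, where $N$ is $\lambda_s$, respectively $\rho_s$). Moreover, Proposition~\ref{4.3} is stated only under the hypothesis $M(e)\neq N(e)$, so your transfer argument does not apply verbatim to this case; one would need an analogue producing, for each $u\leq w$, an orbit of the same cardinality inside some lower dihedral interval $[e,w_0(s,r)]$. (The paper's own proof is equally silent on this case, but it does not rest it on an incorrect assertion.)
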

\begin{proof}
The result follows directly from Proposition~\ref{4.3}.
\end{proof}
We recall that a special matching $M$ of $w$ stabilizes the intersection of $[e,w]$ with any parabolic subgroup containing $M(e)$ (see \cite[Proposition~5.3]{BCM1}). We will use this fact without explicit mention.

\subsection{Right (resp. left) systems and left (resp. right) multiplication matchings}
\begin{pro}
\label{simply-lambda}
Let $W$ be a doubly laced Coxeter group, $w \in W$ and $M$ a special matching of $w$ associated with a right system $(J,s,t,M_{st})$, $M$ not a left  multiplication matching. Then there exists a left multiplication matching $\lambda$ of $w$ commuting with $M$ such that $\lambda(w)\neq M(w)$, unless $m(s,t)=4$ and we are in one of the following cases:\\
{\bf First case:}
\begin{enumerate}
\item $w_0(s,t)=tst $,   
\item $(w^J)^{\{s,t\}}= e$, 
\item $w=tst \cdot \, ^{\{s\}}(w_{J})$,
\end{enumerate}
{\bf Second case:}
\begin{enumerate}
\item  $w_0(s,t)=tsts=stst$,    
\item $(w^J)^{\{s,t\}}= e$,
\item either 
\begin{itemize}
\item $w=tsts \cdot \, ^{\{s\}}(w_{J})$, or 
\item $w=tst \cdot \, ^{\{s\}}(w_{J})$ with $s\leq \, ^{\{s\}}(w_{J})$,
\end{itemize}
\item $M(tsts)=sts$ (and then $M_{st}$ must be the matching mapping $e$ to $s$, $t$ to $ts$, $st$ to $tst$, and $sts$ to $tsts$).
\end{enumerate}
\end{pro}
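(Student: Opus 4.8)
The plan is to locate a suitable left descent $r\in D_L(w)$ and take $\lambda=\lambda_r$; the entire difficulty is to guarantee \emph{simultaneously} that $\lambda_r$ commutes with $M$ and that $\lambda_r(w)\neq M(w)$. First I would record the reduction that makes commutation tractable. Evaluating the right-system formula at $u=e$ gives $M(e)=M_{st}(e)=s$, so by Lemma~\ref{commutano} the matching $\lambda_r$ commutes with $M$ as soon as they commute on the single lower dihedral interval $[e,w_0(s,r)]$ (for $r\neq s$). Recalling that $M$ acts as $\lambda_s$ on $[e,w_0(s,r)]$ for $r\in J$ and as $\rho_s$ on $[e,w_0(s,r)]$ for $r\in S\setminus(J\cup\{t\})$, this yields two families of safe candidates: if $r\in D_L(w)\setminus(J\cup\{t\})$ then $\lambda_r$ commutes with $M=\rho_s$ there because left and right multiplications always commute; and if $r\in D_L(w)\cap J$ with $m(s,r)=2$ then $\lambda_r$ commutes with $M=\lambda_s$ there because $sr=rs$ turns that interval into a Boolean square. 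Conditions R3--R5 are exactly what regulate whether the candidates in the $J$- and $\{s,t\}$-directions fall into this commuting regime.

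Next I would dispose of the easy half of the condition $\lambda(w)\neq M(w)$. For $r\in D_L(w)$ we have $\lambda_r(w)=rw\lhd w$, so whenever $M(w)\rhd w$ the two images have different lengths and $\lambda_r(w)\neq M(w)$ is automatic for \emph{any} commuting candidate. It remains to treat $M(w)\lhd w$, where one must additionally avoid the collision $rw=M(w)$. Here I would use that, $\lambda_r$ and $M$ being commuting special matchings, Lemma~\ref{res}(2) makes $\langle M,\lambda_r\rangle(w)$ a dihedral interval; a collision $\lambda_r(w)=M(w)$ forces this orbit to be the rigid two-element interval $\{M(w),w\}$, which one can either rule out or repair by passing to a second available descent, unless the local $\{s,t\}$-structure is exceptional.

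The substantive work, and the source of the two exceptions, is the residual case where the candidates above are unavailable or all collide, which funnels $D_L(w)$ into the directions $s$ and $t$ and makes the behaviour of $M_{st}$ decisive. For $\lambda_s$ (when $s\in D_L(w)$) commutation on every $[e,w_0(s,r)]$ with $r\neq t$ is free, since $\lambda_s$ commutes with both $\lambda_s$ and $\rho_s$, so everything reduces to whether $M_{st}$ commutes with $\lambda_s$ on $[e,w_0(s,t)]$, which is governed by R4, while R5 controls the interaction with $\rho_s$. I would then run the finite analysis of the dihedral interval $[e,w_0(s,t)]\subseteq W_{\{s,t\}}$, which is legitimate precisely because $W$ is doubly laced and hence $m(s,t)\leq 4$. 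For $m(s,t)\leq 3$ the interval is small enough that a commuting left multiplication matching with $\lambda(w)\neq M(w)$ can always be exhibited; for $m(s,t)=4$ the analysis splits according to whether $w_0(s,t)$ equals $tst$ or the longest element $tsts=stst$ and according to the value of $M_{st}$ on $sts$, and tracking how the right-system formula transports $w=(\text{dihedral part})\cdot{}^{\{s\}}(w_J)$ shows that every candidate $\lambda$ either fails the R4/R5 commutation test or produces $\lambda(w)=M(w)$ exactly in the two listed configurations (both having $(w^J)^{\{s,t\}}=e$, which reflects that the obstruction is purely dihedral).

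The hard part will be this last dihedral bookkeeping for $m(s,t)=4$: one must keep precise track of the non-multiplication matching $M_{st}$ sending $e\mapsto s$, $t\mapsto ts$, $st\mapsto tst$, $sts\mapsto tsts$, because it is this matching (as recorded in the Second case, item (4)) that breaks commutation with $\lambda_s$ and forces the collision $\lambda(w)=M(w)$ for the remaining left descents. Verifying that in these configurations no left descent escapes both failure modes, and conversely that outside them some descent always works, is where the care lies; the rest is routine combinatorics of small dihedral intervals, with Proposition~\ref{4.3} available to match orbit cardinalities when one needs to transfer information between $\langle M,\lambda\rangle(w)$ and its counterpart inside $[e,w_0(s,t)]$.
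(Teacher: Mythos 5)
Your proposal has two genuine gaps, and they sit exactly where the difficulty of the statement lies. First, a misconception in the setup: $M$ is a matching of the poset $[e,w]$, whose top element is $w$, so $M(w)\lhd w$ \emph{always}; the case $M(w)\rhd w$ that you dispose of as the ``easy half'' never occurs. (In fact $M(w)\lhd w$ is not a case to split on but a tool: in the paper it is what eliminates sub-cases such as $w=t\cdot{}^{\{s\}}(w_{J})$ with $s\leq{}^{\{s\}}(w_{J})$, where the right-system formula would force $M(w)\rhd w$.) Second, and more seriously, you never produce a mechanism guaranteeing $\lambda(w)\neq M(w)$: ``rule out or repair by passing to a second available descent'' is not an argument, and Remark~\ref{osservazioni} shows that collisions genuinely occur (for $w=tstrs$ there the unique candidate $\lambda_t$ commutes with $M$ but agrees with it on $w$), so some structural input must decide when they are avoidable. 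The paper's proof supplies precisely this input, absent from your plan: when $(w^J)^{\{s,t\}}\neq e$, one chooses $l\in D_L\bigl((w^J)^{\{s,t\}}\bigr)$, a left descent of the \emph{leftmost factor} of the right-system factorization, rather than an arbitrary element of $D_L(w)$. For such an $l$, non-collision is automatic, because $M(w)$ leaves the factor $(w^J)^{\{s,t\}}$ untouched and applies $M_{st}$ to the middle factor, while $\lambda_l(w)$ alters the leftmost factor; and commutation follows from R3 when $l\notin\{s,t\}$ (either $l\notin J$, so $M$ acts as $\rho_s$ on $[e,w_0(s,l)]$, or $l\leq w^J$ forces $l$ to commute with $s$) and from R4 when $l\in\{s,t\}$. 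Your candidate set $D_L(w)$ is too large for either half: a descent $r\in D_L(w)\cap J$ with $r\not\leq w^J$ need not commute with $s$ (R3 does not apply, so $\lambda_r$ need not commute with $M$), and for the candidates that do commute you have no way to exclude $rw=M(w)$.

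Consequently your ``residual case'' is misidentified. Failure of your safe candidates does not funnel $D_L(w)$ into the directions $s$ and $t$; the correct dichotomy is whether $(w^J)^{\{s,t\}}=e$. When it is, every generator $r\leq w$ with $r\neq t$ lies in $J$ (any $r\notin J\cup\{t\}$ below $w$ would satisfy $r\leq w^J=(w^J)_{\{s,t\}}\in W_{\{s,t\}}$, which is absurd), so $M$ acts as $\lambda_s$ on every $[e,w_0(s,r)]$ with $r\neq t$ and all the action is concentrated on $[e,w_0(s,t)]$ together with a tail ${}^{\{s\}}(w_{J})$. Only then does the finite analysis you sketch --- splitting on $w_0(s,t)\in\{ts,sts,tst,tsts\}$, using $M(w)\lhd w$ to kill sub-cases, and exhibiting $\lambda_t$ or $\lambda_s$ outside the two listed configurations --- go through, and it matches the paper's endgame. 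So your final dihedral bookkeeping is in the right spirit, but without the leftmost-factor descent idea the reduction to it, which is the bulk of the proof, is unsupported.
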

\begin{proof}
If $(w^J)^{\{s,t\}}\neq e$, there exists $l\in D_L((w^J)^{\{s,t\}})$. Then $l\in D_L(w)$ and $\lambda_l$ is a special matching of $w$ which satisfies  $M(w) \neq \lambda_l(w)$ since 
 $$M(w) = (w^{J})^{\{s,t\}}\, \cdot \, M_{st} \Big( (w^{J})_{\{s,t\}} \,  \cdot  \,  _{\{s\}} (w_{J}) \Big) \, 
\cdot \, ^{\{s\}}(w_{J})$$
while
$$\lambda_l(w) = l \cdot (w^{J})^{\{s,t\}}\, \cdot \,  \Big( (w^{J})_{\{s,t\}} \,  \cdot  \,  _{\{s\}} (w_{J}) \Big) \, 
\cdot \, ^{\{s\}}(w_{J}).$$
We have to show that $M$ and $\lambda_l$ commute. We distinguish the following cases, in which we apply Lemma~\ref{commutano}.\\
(a) $l\notin \{s,t\}$\\
By Property R3 of the definition of a right system, either $l \notin J$ or $l$ commutes with $s$. In the first case, $M$ acts as $\rho_s$ on  $[e,w_0(s,l)]$  and hence commutes with $\lambda_l$. In the second case, $M$ and $\lambda_l$ clearly commutes on  $[e,w_0(s,l)]$ because  $[e,w_0(s,l)]$   has just  4 elements.\\
(b) $l=t$\\
By Property R4, $M$ commutes with $\lambda_t$ on the lower interval  $[e,w_0(s,t)]$ (we are either in case (a) or in case (c) of  Property R4).\\
(c) $l=s$\\
By  Lemma~\ref{commutano}, we need to show that $M$ and $\lambda_s$ commute on every lower dihedral intervals $[e,w_0(s,r)]$, with $r\in S\setminus \{s\}$. For $r=t$, it follows from  Property R4 (we are either in case (a) or in case (b) of  Property R4). For $r\neq t$, $M$ acts on $[e,w_0(s,r)]$ as $\rho_s$ or $\lambda_s$, and in both cases $M$ commutes with $\lambda_s$ on $[e,w_0(s,r)]$.

We now suppose $(w^J)^{\{s,t\}}= e$. By the definition of a right system, $M$ acts as $\lambda_s$ on every lower dihedral interval  $[e,w_0(s,r)]$, $r \in S \setminus \{s,t\}$. On $[e,w_0(s,t)]$, $M$ does not act as $\lambda_s$ as otherwise $M$ would coincide with $\lambda_s$  everywhere, but $M$ is not a left multiplication matching by hypothesis. In particular, $M(t)= ts \neq st$. If $w_0(s,t)=ts$,  then either $w= ts \cdot  \, ^{\{s\}}(w_{J})$, or $w= t \cdot  \, ^{\{s\}}(w_{J})$ with $s \leq  \, ^{\{s\}}(w_{J})$. Indeed, the second case cannot occur since $M(w)$ would be $M(t)  \cdot  \, ^{\{s\}}(w_{J})= ts \cdot  \, ^{\{s\}}(w_{J})$  and we would have $M(w)\rhd w$, which is impossible.  So $w= ts \cdot  \, ^{\{s\}}(w_{J})$, $t\in D_L(w)$, $\lambda_t$ is a special matching of $w$, and $M$ commutes with $\lambda_t$ by Lemma~\ref{commutano}. Moreover, 
$$M(w)= M(ts) \cdot  \, ^{\{s\}}(w_{J})= t \cdot  \, ^{\{s\}}(w_{J}),$$
while 
$$\lambda_t(w)= s  \cdot  \, ^{\{s\}}(w_{J}),$$
so $M(w) \neq \lambda_s(w)$.

So we may assume that the lower dihedral interval $[e,w_0(s,t)]$  has at least 6 elements (hence it has  6 or 8 elements, since $W$ is doubly laced). 
Suppose that $[e,w_0(s,t)]$ has 6 elements.
Necessarily, $M(e)=s$, $M(t)=ts$, and $M(st)= w_0(s,t)\in \{tst,sts\}$.
If $w_0(s,t)=tst\neq sts$, then, since $tst \leq w$ and $t \not \leq \, ^{\{s\}}(w_{J})$, we have $w= tst \cdot  \, ^{\{s\}}(w_{J})$ and we are in the first case of the statement of the proposition.
 If $w_0(s,t)=sts$, then,
since $sts \leq w$ and $t \not \leq \, ^{\{s\}}(w_{J})$, we have only two possibilities: either $w= sts \cdot  \, ^{\{s\}}(w_{J})$, or $w= st \cdot  \, ^{\{s\}}(w_{J})$ with $s \leq  \, ^{\{s\}}(w_{J})$. Indeed, the second one cannot occur since, in that case, $M(w)= M(st)  \cdot  \, ^{\{s\}}(w_{J})= sts \cdot  \, ^{\{s\}}(w_{J})$  and we would have $M(w)\rhd w$, which is impossible. So $w= sts \cdot  \, ^{\{s\}}(w_{J})$; thus $s\in D_L(w)$, $\lambda_s$ is a special matching of $w$, and $M$ commutes with $\lambda_s$ by Lemma~\ref{commutano}. Moreover, 
$$M(w)= M(sts) \cdot  \, ^{\{s\}}(w_{J})= st \cdot  \, ^{\{s\}}(w_{J}),$$
while 
$$\lambda_s(w)= ts  \cdot  \, ^{\{s\}}(w_{J}),$$
so $M(w) \neq \lambda_s(w)$.

Suppose that $[e,w_0(s,t)]$ has 8 elements, i.e.,  $w_0(s,t)=stst=tsts$ since $W$ is doubly laced. 
Then, since $tsts \leq w$ and  $t \not \leq  \, ^{\{s\}}(w_{J})$, we have either $w=tsts \; ^{\{s\}}(w_{J})$, or $w=tst \; ^{\{s\}}(w_{J})$ with $s\leq \, ^{\{s\}}(w_{J})$. In both cases $t\in D_L(w)$, hence $\lambda_t$ is a special matching of $w$. The matching $\lambda_t$ always commutes with $M$ since $M(t)=ts$ by the definition of a right system; moreover, evidently, $\lambda_t(w)=M(w)$  if and only if  $M(tsts)=sts$. 

The proof is complete.
\end{proof}

Note that the conditions in the cases of the statement of Proposition~\ref{simply-lambda} (and also of the forthcoming Propositions~\ref{simply-lambda2},  \ref{double-rho}, and  \ref{double-rho2}) are redundant but we prefer to emphasize them since they are needed later.

The symmetric version of Proposition~\ref{simply-lambda} is the following.
\begin{pro}
\label{simply-lambda2}
Let $W$ be a doubly laced Coxeter group, $w \in W$ and $M$ a special matching of $w$ associated with a left system $(J,s,t,M_{st})$, $M$ not a right multiplication matching. Then there exists a right multiplication matching $\rho$ of $w$ commuting with $M$ such that $\rho(w) \neq M(w)$, unless $m(s,t)=4$ and we are in one of the following cases:\\
{\bf First case:}
\begin{enumerate}
\item $w_0(s,t)=tst$,    
\item $^{\{s,t\}} (^Jw)= e$, 
\item $w=   (_{J}w)^{\{s\}} \, \cdot tst $,
\end{enumerate}
{\bf Second case:}
\begin{enumerate}
\item  $w_0(s,t)=stst=tsts$,    
\item $^{\{s,t\}} (^Jw)= e$,
\item either 
\begin{itemize}
\item $w= (_{J}w)^{\{s\}} \, \cdot stst    $, or 
\item $w=  (_{J}w)^{\{s\}} \, \cdot tst $ with $s\leq  (_{J}w)^{\{s\}}$,
\end{itemize}
\item $M(stst)=sts$ (and then $M_{st}$ must be the matching mapping $e$ to $s$, $t$ to $st$, $ts$ to $tst$, and $sts$ to $stst$).
\end{enumerate}
\end{pro}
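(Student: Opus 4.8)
The plan is to deduce Proposition~\ref{simply-lambda2} from Proposition~\ref{simply-lambda} by means of the inversion anti-automorphism $\iota\colon W\to W$, $\iota(w)=w^{-1}$. The point is that $\iota$ is an automorphism of the Bruhat order which interchanges every left structure with its right counterpart: it swaps left and right descents ($D_L(w)=D_R(w^{-1})$); it sends $W^{J}$ to ${}^{J}W$ (as recorded in the definition of ${}^{J}W$); it interchanges the two factorizations of Proposition~\ref{fattorizzo}, since $w={}_{J}w\cdot{}^{J}w$ gives $w^{-1}=({}^{J}w)^{-1}\cdot({}_{J}w)^{-1}$ with $({}^{J}w)^{-1}\in W^{J}$ and $({}_{J}w)^{-1}\in W_{J}$; and it carries the left multiplication matching $\lambda_{l}$ of $w$ to the right multiplication matching $\rho_{l}$ of $w^{-1}$, because $(lu)^{-1}=u^{-1}l$. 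Note that inversion preserves the doubly laced hypothesis and every $m(s,t)$, that $w_{0}(s,t)$ computed for $w^{-1}$ is the inverse of the one computed for $w$, and that for $m(s,t)=4$ the elements $tst$ and $stst=tsts$ are involutions.

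Next I would show that $\iota$ transports left systems to right systems. Given a left system $(J,s,t,M_{st})$ for $w$ with associated matching $M$, define a matching $M'$ of $w^{-1}$ by $M'(v)=(M(v^{-1}))^{-1}$ and a matching $M'_{st}$ of $w_{0}(s,t)^{-1}$ by $M'_{st}(x)=(M_{st}(x^{-1}))^{-1}$. Writing $v=u^{-1}$, the nested factorizations invert as $(v^{J})^{\{s,t\}}=({}^{\{s,t\}}({}^{J}u))^{-1}$, $(v^{J})_{\{s,t\}}=({}_{\{s,t\}}({}^{J}u))^{-1}$, ${}_{\{s\}}(v_{J})=(({}_{J}u)_{\{s\}})^{-1}$ and ${}^{\{s\}}(v_{J})=(({}_{J}u)^{\{s\}})^{-1}$, so a direct substitution into the right-system formula shows that the matching associated with $(J,s,t,M'_{st})$, read as a right system, is exactly $M'$. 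One then checks that each axiom R1--R5 for $(J,s,t,M'_{st})$ is the $\iota$-image of the corresponding axiom L1--L5 for $(J,s,t,M_{st})$: for instance $M'_{st}(t)=(M_{st}(t))^{-1}=(st)^{-1}=ts$ as demanded by R1; the inequality of R2 is the $\iota$-image of that of L2; the condition $r\le(w^{-1})^{J}$ in R3 is equivalent to $r\le{}^{J}w$ in L3 since $r$ is an involution; and the alternatives $M'_{st}=\rho_s$, ``commutes with $\lambda_s$'', ``commutes with $\lambda_t$'' in R4--R5 are the $\iota$-images of $M_{st}=\lambda_s$, ``commutes with $\rho_s$'', ``commutes with $\rho_t$'' in L4--L5. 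Finally the hypothesis that $M$ is not a right multiplication matching becomes the hypothesis that $M'$ is not a left multiplication matching.

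I would then apply Proposition~\ref{simply-lambda} to $w^{-1}$ and $M'$. Outside the two exceptional configurations it produces a left multiplication matching $\lambda_{l}$ of $w^{-1}$ commuting with $M'$ and satisfying $\lambda_{l}(w^{-1})\neq M'(w^{-1})$. Transporting back through $\iota$ gives the right multiplication matching $\rho_{l}$ of $w$: it commutes with $M$ because commutation of involutions is preserved by the anti-automorphism, and it satisfies $\rho_{l}(w)\neq M(w)$ by applying $\iota$ to the inequality (using $(\lambda_{l}(w^{-1}))^{-1}=\rho_{l}(w)$ and $(M'(w^{-1}))^{-1}=M(w)$). This is precisely the assertion of the proposition in the non-exceptional range.

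It remains to match the two exceptional cases, and this is where the only real work lies. I would translate each condition of Proposition~\ref{simply-lambda}, stated for $w^{-1}$, back to $w$: the identity $((w^{-1})^{J})^{\{s,t\}}=({}^{\{s,t\}}({}^{J}w))^{-1}$ turns $((w^{-1})^{J})^{\{s,t\}}=e$ into ${}^{\{s,t\}}({}^{J}w)=e$; the shape $w^{-1}=tst\cdot{}^{\{s\}}((w^{-1})_{J})$ inverts to $w=({}_{J}w)^{\{s\}}\cdot tst$; and $M'(tsts)=sts$ becomes $M(stst)=sts$. In this way the first (respectively second) exceptional case for $w^{-1}$ corresponds term by term to the first (respectively second) case in the present statement. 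The main obstacle is thus not conceptual but purely a matter of bookkeeping: one must track the interlocking factorizations with respect to $\{s\}$, $\{s,t\}$ and $J$, together with their inverses, with enough care that the dictionary between L-axioms and R-axioms and between the two lists of exceptional conditions is exact, since a single slip would misidentify which configuration is exceptional.
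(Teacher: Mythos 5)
Your proposal is correct and is exactly the argument the paper intends: the paper states Proposition~\ref{simply-lambda2} without proof, simply as ``the symmetric version'' of Proposition~\ref{simply-lambda}, and that symmetry is precisely the inversion anti-automorphism $w \mapsto w^{-1}$ you have spelled out. Your dictionary (left systems of $w$ $\leftrightarrow$ right systems of $w^{-1}$, $\lambda_l \leftrightarrow \rho_l$, and the term-by-term translation of the exceptional cases, e.g.\ $M'(tsts)=sts \Leftrightarrow M(stst)=sts$) checks out and merely makes explicit what the paper leaves implicit.
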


\begin{rem}
\label{osservazioni}
We make the following observations.
\begin{enumerate}
\item In the proof of Proposition~\ref{simply-lambda}, we use the hypothesis that $W$ be doubly laced only in the case  $(w^J)^{\{s,t\}}= e$ and we actually use only the fact that $m(s,t)\leq 4$.

\item Proposition~\ref{simply-lambda} does not hold in the special cases we excluded.  A trivial example can be found in the dihedral Coxeter system $(W, \{s,t\})$ with $m(s,t)=4$. Since usually  dihedral Coxeter systems can be treated separately, we give also the following less trivial counterexample which explains better what obstructions may occur. Let $(W,\{s,t,r\})$ be the Coxeter system with Coxeter matrix satisfying $m(s,t)=4, m(s,r)=m(t,r)=3$. Consider the element $w=tstrs \in W$ and the matching associated with the right system $(J=\{s,r\}, s,t, M_{st})$ with $M_{st}(st)=tst$ and  $M_{st}(sts)=tsts$. Then $M$ commutes with the unique left multiplication special matching $\lambda_t$ but it coincides with it on $w$.

\end{enumerate}

These observations similarly hold true, mutatis mutandis, also for Proposition \ref{simply-lambda2}.
\end{rem}

\subsection{Right (resp. left) systems and right (resp. left) multiplication matchings}
\begin{pro}
\label{double-rho}
Let $W$ be a doubly laced Coxeter group, $w \in W$ and $M$ a special matching of $w$ associated with a right system  $(J,s,t,M_{st})$, $M$ not a right multiplication matching. Then there exists a right multiplication matching $\rho$ of $w$ commuting with $M$ such that $\rho(w) \neq M(w)$,  unless $m(s,t)=4$ and we are in the following case:
\begin{enumerate}
\item $w_0(s,t)=tst $,    
\item $w_J= e$,
\item $w=  (w^{J})^{\{s,t\}} \cdot tst$.
\end{enumerate}
\end{pro}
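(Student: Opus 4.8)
The plan is to transpose the proof of Proposition~\ref{simply-lambda}, interchanging the two sides on which we act: the input matching $M$ again comes from a \emph{right} system $(J,s,t,M_{st})$, but now we must produce a commuting \emph{right} multiplication matching $\rho_l$ (with $l\in D_R(w)$) rather than a left one. Accordingly, the governing dichotomy moves from the left factor $(w^J)^{\{s,t\}}$ to the right factor $w_J$ of the canonical factorization $w=w^J\cdot w_J$ of Proposition~\ref{fattorizzo}; Property R4 (which controlled $M_{st}$ against the $\lambda$'s) is replaced by Property R5 (which controls it against $\rho_s$); and I will use repeatedly the trivial identity $\lambda_a\rho_b=\rho_b\lambda_a$, valid because left and right multiplications commute as maps.

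\emph{Easy case: $w_J\neq e$.} Since the factorization $w=w^J\cdot w_J$ is length-additive and $D_R(w)\cap J=D_R(w_J)$, every $l\in D_R(w_J)\subseteq J$ is a right descent of $w$, so $\rho_l$ is a right multiplication matching of $w$. By Lemma~\ref{commutano} it suffices to check commutation of $\rho_l$ with $M$ on the lower dihedral intervals through $s=M(e)$ and $l$. If $l\neq s$, the only such interval is $[e,w_0(s,l)]$, on which $M$ acts as $\lambda_s$ (because $l\in J$), and $\lambda_s$ commutes with $\rho_l$ for free. If $l=s$, I run over all $[e,w_0(s,r)]$: for $r\neq t$, $M$ acts as $\lambda_s$ or $\rho_s$, each commuting with $\rho_s$, and for $r=t$ I invoke Property R5 with $v=w$, whose hypothesis $s\leq{}^{\{s\}}(w_J)$ holds provided ${}^{\{s\}}(w_J)\neq e$. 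To obtain $\rho_l(w)\neq M(w)$, I pick $l$ to be a right descent of the rightmost factor ${}^{\{s\}}(w_J)$, which $M$ leaves untouched while $\rho_l$ shortens it, so the images differ by uniqueness of the factorization. The genuinely degenerate sub-configuration ${}^{\{s\}}(w_J)=e$, that is $w_J=s$ (where both the R5 hypothesis and the difference argument may fail, for instance when $M(w)$ happens to equal $w^J$), must be settled by hand by exhibiting a different right descent of $w$.

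\emph{Hard case: $w_J=e$, so $w=w^J\in W^J$ and $s\notin D_R(w)$.} Here $\rho_s$ is unavailable, and I first localize the obstruction: since $M$ acts as $\rho_s$ on $[e,w_0(s,r)]$ for $r\in S\setminus(J\cup\{t\})$ and, using R3 together with the resulting commutation of $r$ and $s$, also on $[e,w_0(s,r)]$ for $r\in J$, the hypothesis $M\neq\rho_s$ forces $M_{st}\neq\rho_s$ on $[e,w_0(s,t)]$. Using that $W$ is doubly laced, I classify the dihedral interval: the values $w_0(s,t)\in\{st,ts,sts\}$ and the case $m(s,t)=3$ all force $M_{st}=\rho_s$ (contradiction), leaving $m(s,t)=4$ with $w_0(s,t)\in\{tst,stst\}$. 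A key elementary computation then shows that $\rho_t$ can never be used: from $M_{st}(e)=s$ and $M_{st}(t)=ts$ one gets $M\rho_t(e)=ts\neq st=\rho_t M(e)$ as soon as $s,t$ do not commute, so $\rho_t$ fails to commute with $M$. Hence any admissible right multiplication matching must be $\rho_r$ with $r\in D_R(w)\setminus\{t\}$ and $s,r$ commuting (so that $\rho_r$ commutes with $M=\rho_s$ on $[e,w_0(s,r)]$ and moves $(w^J)^{\{s,t\}}$, giving $\rho_r(w)\neq M(w)$), and one checks such an $r$ is available whenever we are not in the listed configuration $w_0(s,t)=tst$, $w=(w^J)^{\{s,t\}}\cdot tst$.

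The main obstacle is precisely this last, hard-case bookkeeping. The difficulty is twofold. First, because $\rho_t$ is provably useless (in sharp contrast with $\lambda_t$ in Proposition~\ref{simply-lambda}), one is forced to locate a right descent $r$ lying inside $(w^J)^{\{s,t\}}$, commuting with $s$, and surviving as a descent of the full $w$ past the dihedral tail. Second, one must verify that the only configuration in which no such $r$ can be found is the single one listed: the $w_0(s,t)=stst$ interval is not exceptional because it forces $(w^J)^{\{s,t\}}\neq e$ (otherwise $w=(w^J)_{\{s,t\}}\leq tst<stst$, contradicting $w_0(s,t)=stst$), which is what ultimately provides the escape, whereas $w_0(s,t)=tst$ with $w=(w^J)^{\{s,t\}}\cdot tst$ leaves no room. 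Controlling how the forced matching $M_{st}$ on the six-element interval $[e,tst]$ (with $M_{st}(tst)=st$) interacts with the available descents, and ruling out every other escape route, is where the real work lies.
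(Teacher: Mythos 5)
There is a genuine gap, and it is exactly the case you set aside: $w_J\neq e$ but ${}^{\{s\}}(w_J)=e$, i.e.\ $w_J=s$. Your dichotomy is $w_J\neq e$ versus $w_J=e$, but your easy-case argument really runs on ${}^{\{s\}}(w_J)\neq e$ (you need $s\leq{}^{\{s\}}(w_J)$ to invoke R5, and you need a nontrivial rightmost factor to get $\rho_l(w)\neq M(w)$); the paper accordingly splits on ${}^{\{s\}}(w_J)\neq e$ versus ${}^{\{s\}}(w_J)=e$, and its second case treats $w_J\in\{e,s\}$ together. The deferred configuration $w_J=s$ is not a marginal leftover to be ``settled by hand'': it is precisely where the $w_0(s,t)=stst=tsts$ situation lives, and it carries the bulk of the proof. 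Indeed, when ${}^{\{s\}}(w_J)=e$ and $w_0(s,t)=stst$, one must pin down $M_{st}$ (it maps $e\mapsto s$, $t\mapsto ts$, $st\mapsto tst$, $sts\mapsto stst$), use R4(a) and R4(b) (noting that this $M_{st}$ does not commute with $\lambda_s$) to conclude $s\not\leq(w^J)^{\{s,t\}}$, use $M(w)\lhd w$ to rule out $(w^J)_{\{s,t\}}\cdot{}_{\{s\}}(w_J)=sts$, and conclude $w=(w^J)^{\{s,t\}}\cdot stst$, so that ${}_{\{s\}}(w_J)=s$, $s\in D_R(w)$, and the commuting matching is $\rho_s$ itself, with $\rho_s(w)=(w^J)^{\{s,t\}}\cdot tst\neq(w^J)^{\{s,t\}}\cdot sts=M(w)$. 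None of this appears in your proposal, and it cannot be produced by your hard case, which assumes $w_J=e$ and declares $\rho_s$ ``unavailable''.

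Your hard case ($w_J=e$) also relies on a flawed mechanism. You propose to find $r\in D_R(w)\setminus\{t\}$ ``lying inside $(w^J)^{\{s,t\}}$, commuting with $s$, and surviving as a descent of the full $w$ past the dihedral tail'': but a right descent of the left factor $(w^J)^{\{s,t\}}$ is in general not a right descent of $w=(w^J)^{\{s,t\}}\cdot(w^J)_{\{s,t\}}$ once a nontrivial dihedral tail is appended, so there is nothing that ``survives''. Fortunately, no such search is needed: with $w_J=e$ one has $s\notin D_R(w)$, and $M(w)\lhd w$ forces $(w^J)_{\{s,t\}}$ to be an element moved down by $M_{st}$ that does not have $s$ as a right descent, hence $(w^J)_{\{s,t\}}=tst$; thus $w_0(s,t)=tst$ lands you automatically in the excluded configuration, while $w_0(s,t)=stst$ is impossible (once $s\not\leq(w^J)^{\{s,t\}}$, a reduced word for $w=(w^J)^{\{s,t\}}\cdot tst$ contains only one letter $s$, so $stst\not\leq w$). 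So the case you call hard requires no escape at all, whereas the case you deferred requires the real argument; as written, the proof is incomplete.
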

\begin{proof}
If $^{\{s\}}(w_J)\neq e$, there exists $r\in D_R(^{\{s\}}(w_J))$. 
Then $r\in D_R(w)$ and $\rho_r$ is a special matching of $w$ which satisfies $M(w) \neq \rho_r(w)$ since 
 $$M(w) = (w^{J})^{\{s,t\}}\, \cdot \, M_{st} \Big( (w^{J})_{\{s,t\}} \,  \cdot  \,  _{\{s\}} (w_{J}) \Big) \, 
\cdot \, ^{\{s\}}(w_{J})$$
while
$$\rho_r(w) =  (w^{J})^{\{s,t\}}\, \cdot \,  \Big( (w^{J})_{\{s,t\}} \,  \cdot  \,  _{\{s\}} (w_{J}) \Big) \, 
\cdot \, ^{\{s\}}(w_{J}) \cdot r.$$
In order to show that $M$ and $\rho_r$ commute,  we apply Lemma~\ref{commutano}. If $r \neq s$, $M$ acts as $\lambda_s$ on the lower dihedral interval  $[e,w_0(s,r)]$ and hence it commutes with $\rho_r$. If $r=s$, we need to show that $M$ and $\rho_s$ commute on every  lower dihedral interval  $[e,w_0(s,l)]$, with $l\in S\setminus \{s\}$. For $l=t$, it follows from  Property R5. For $l\neq t$, $M$ acts on $[e,w_0(s,l)]$ either as $\lambda_s$ or as $\rho_s$: in both cases it commutes with $\rho_s$ on $[e,w_0(s,l)]$.

We now suppose $^{\{s\}}(w_J)= e$. By the definition of a right system, $M$ acts as $\rho_s$ on every lower dihedral interval $[e,w_0(s,r)]$, $r \in S \setminus \{s,t\}$. On $[e,w_0(s,t)]$, $M$ does not act as $\rho_s$ as otherwise $M$  would coincide with $\rho_s$ everywhere, but $M$ is not a right multiplication matching by hypothesis. 
Since $M(e)=\rho_s(e)=s$ and  $M(t)=\rho_s(t)=ts$, $w_0(s,t)$ cannot be $sts$, which implies that $m(s,t)=4$ (since $W$ is a doubly laced Coxeter group). If $w_0(s,t)=tst$, then also  $_{\{s\}}(w_J)= e$, as otherwise $tsts$ would be $\leq w$ by the Subword Property. Hence $w_J = e $ and $w=  (w^{J})^{\{s,t\}} \cdot   (w^{J})_{\{s,t\}}$. Since $M(w)\lhd w$ and $M(w)= (w^{J})^{\{s,t\}} \cdot  M( (w^{J})_{\{s,t\}})$ by definition, we have $M( (w^{J})_{\{s,t\}})\lhd (w^{J})_{\{s,t\}}$ and hence
$ (w^{J})_{\{s,t\}}\notin \{e, t, st\}$. Moreover, since $s\in J$, clearly $s\notin D_R(w^{J})$, which implies $s\notin D_R( (w^{J})_{\{s,t\}})$, and thus $(w^{J})_{\{s,t\}}\notin  \{s, ts\} $. The only possibility left is  $(w^{J})_{\{s,t\}}= tst$. Hence $w=  (w^{J})^{\{s,t\}} \cdot tst$ and
 we are in the case we excluded in the statement of the proposition. 

Now suppose that $w_0(s,t)= stst=tsts $; since $M\neq \rho_s$, necessarily $M(st)=tst $ and $M(sts) = stst $.   
By Property R4, (a), it is not possible that both $s$ and $t$ are $\leq (w^{J})^{\{s,t\}}$.
Since $M$ does not commute with $\lambda_s$ on $[e,w_0(s,t)]$, we have $s \not \leq (w^{J})^{\{s,t\}}$ by Property~R4,~(b). 
  Since  $stst \leq w$, we have that $(w^{J})_{\{s,t\}} \,  \cdot  \,  _{\{s\}} (w_{J}) $  can be either $stst$ or $sts$. The second case is impossible since $M(w)= (w^{J})^{\{s,t\}} \cdot M(sts)=(w^{J})^{\{s,t\}} \cdot  stst$ would be greater than $w$. Hence 
$$w= (w^{J})^{\{s,t\}} \cdot  stst,$$
$s\in D_R(w)$, and $\rho_s$ is a special matching of $w$. Since $M$ and $\rho_s$ commute on every lower dihedral interval, they commute everywhere by Lemma~\ref{commutano}. Moreover $M(w) \neq \rho_s(w)$ since
$$M(w)= M \Big( (w^{J})^{\{s,t\}}\, \cdot \,  stst  \Big)=   (w^{J})^{\{s,t\}}\, \cdot \, M(stst)=  (w^{J})^{\{s,t\}}\, \cdot \, sts$$ 
while 
$$\rho_s(w)= \rho_s \Big( (w^{J})^{\{s,t\}}\, \cdot \,  stst  \Big)= (w^{J})^{\{s,t\}}\, \cdot \, tst.$$ 
The proof is complete.
\end{proof}

The symmetric version of Proposition~\ref{double-rho} is the following.
\begin{pro}
\label{double-rho2}
Let $W$ be a doubly laced Coxeter group, $w \in W$ and $M$ a special matching of $w$ associated with a left system  $(J,s,t,M_{st})$, $M$ not a left  multiplication matching. Then there exists a left multiplication matching $\lambda$ of $w$ commuting with $M$ such that $\lambda(w)\neq M(w)$,  unless $m(s,t)=4$ and we are in the following case:
\begin{enumerate}
\item $w_0(s,t)=tst $,    
\item $_Jw= e$,
\item $w=  tst  \cdot \, ^{\{s,t\}} (^{J}w)$.
\end{enumerate}.
\end{pro}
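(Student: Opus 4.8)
The plan is to derive Proposition~\ref{double-rho2} from Proposition~\ref{double-rho} via the inverse anti-automorphism $\iota\colon W\to W$, $\iota(x)=x^{-1}$. This map restricts to a poset isomorphism $[e,w]\to[e,w^{-1}]$ (Bruhat order is inverse-invariant), it interchanges left and right descents since $D_L(x)=D_R(x^{-1})$, it interchanges minimal coset representatives since ${}^{J}W=(W^J)^{-1}$, and, writing $M^{\iota}:=\iota\circ M\circ\iota$ (so that $M^{\iota}(v)=(M(v^{-1}))^{-1}$), it carries $\lambda_r$ to $\rho_r$ and $\rho_r$ to $\lambda_r$. For any matching $M$ of $w$, the conjugate $M^{\iota}$ is a matching of $w^{-1}$, and it is special precisely when $M$ is, because conjugation by a poset isomorphism preserves the special property.

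The core step is to set up a dictionary between left systems for $w$ and right systems for $w^{-1}$. Given a left system $(J,s,t,M_{st})$ for $w$ with associated matching $M$, I would show that $(J,s,t,M_{st}^{\iota})$ is a right system for $w^{-1}$ whose associated matching is exactly $M^{\iota}$. The verification rests on how the nested factorizations behave under $\iota$: inverting $u={}_Ju\cdot{}^Ju$ yields $u^{-1}=(u^{-1})^J\cdot(u^{-1})_J$ with $(u^{-1})^J=({}^Ju)^{-1}$ and $(u^{-1})_J=({}_Ju)^{-1}$ (Proposition~\ref{fattorizzo} and its left analogue), and, since $W_{\{s\}}=\{e,s\}$ consists of involutions, the right $\{s\}$-part of ${}_Ju$ inverts to the left $\{s\}$-part of $(u^{-1})_J$ while the left $\{s,t\}$-part of ${}^Ju$ inverts to the right $\{s,t\}$-part of $(u^{-1})^J$. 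Substituting these identities into the formula for the matching of a left system and inverting term by term reproduces the formula for the matching of the right system $(J,s,t,M_{st}^{\iota})$, giving $M^{\iota}=N$ where $N$ is that matching. The conditions then translate one-to-one: L1 becomes R1 because $M_{st}(t)=st$ forces $M_{st}^{\iota}(t)=ts$; L2 becomes R2 since $M^{\iota}$ stabilizes $[e,w^{-1}]$; L3 becomes R3; and the hypotheses and conclusions of L4 and L5 become those of R4 and R5, using that $s,t,r$ are involutions, that $\leq$ is $\iota$-invariant, and that $M_{st}^{\iota}$ commutes with $\rho_r$ iff $M_{st}$ commutes with $\lambda_r$. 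Moreover $M$ fails to be a left multiplication matching exactly when $M^{\iota}$ fails to be a right multiplication matching.

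With the dictionary established, I would apply Proposition~\ref{double-rho} to $w^{-1}$ and the right system $(J,s,t,M_{st}^{\iota})$. Away from the exceptional case it produces a right multiplication matching $\rho=\rho_r$ of $w^{-1}$ commuting with $M^{\iota}$ and satisfying $\rho(w^{-1})\neq M^{\iota}(w^{-1})$. Conjugating back, $\lambda:=\rho^{\iota}=\lambda_r$ with $r\in D_R(w^{-1})=D_L(w)$ is a left multiplication matching of $w$; since $\iota$-conjugation is a group isomorphism it preserves commuting, so $\lambda$ commutes with $M$, and applying $\iota$ to $\rho(w^{-1})\neq M^{\iota}(w^{-1})$ gives $\lambda(w)\neq M(w)$. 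Finally the excluded case matches: $tst$ is self-inverse, so it is the maximum of $W_{\{s,t\}}\cap[e,w^{-1}]$ iff it is the maximum of $W_{\{s,t\}}\cap[e,w]$, i.e.\ $w_0(s,t)=tst$; next $(w^{-1})_J=e$ iff ${}_Jw=e$; and inverting $w^{-1}=((w^{-1})^J)^{\{s,t\}}\cdot tst$ gives $w=tst\cdot{}^{\{s,t\}}({}^Jw)$, since $(tst)^{-1}=tst$ and $(((w^{-1})^J)^{\{s,t\}})^{-1}={}^{\{s,t\}}({}^Jw)$.

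The main obstacle is the middle paragraph: carrying out the bookkeeping of the four nested factorizations (first $W_J$ versus ${}^JW$, then their $\{s\}$- and $\{s,t\}$-refinements) under $\iota$ and confirming that the composite formula for a left-system matching inverts exactly into the composite formula for a right-system matching. Once these factorization identities are recorded, each condition L$i$ matches R$i$ mechanically and the exceptional case transforms transparently, so that Proposition~\ref{double-rho2} is precisely Proposition~\ref{double-rho} read through $\iota$.
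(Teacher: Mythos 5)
Your proposal is correct and is exactly the argument the paper intends: the paper states Proposition~\ref{double-rho2} as ``the symmetric version'' of Proposition~\ref{double-rho} with no written proof, implicitly invoking the left--right symmetry given by the inverse anti-automorphism $\iota(x)=x^{-1}$, which is precisely the dictionary (left systems for $w$ $\leftrightarrow$ right systems for $w^{-1}$, $\lambda_r \leftrightarrow \rho_r$, exceptional case $\leftrightarrow$ exceptional case) that you spell out. Your write-up simply makes explicit the factorization identities $(u^{-1})^J=({}^{J}u)^{-1}$, $(u^{-1})_J=({}_{J}u)^{-1}$ and their $\{s\}$- and $\{s,t\}$-refinements on which that symmetry rests.
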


\begin{rem}
We make the following observations.
\begin{enumerate}
\item In the proof of Proposition~\ref{double-rho}, we  use the hypothesis that $W$ be doubly laced only in the case  $^{\{s\}}(w_J)= e$ and we actually use only the fact that $m(s,t)\leq 4$.

\item Proposition~\ref{double-rho} does not hold if $m(s,t)=5$. A trivial counterexample can be found in the dihedral Coxeter system $(W, \{s,t\})$ with $m(s,t)=5$. Since usually  dihedral Coxeter systems can be treated separately, we give also the following less trivial counterexample which explains better what obstruction may occur. Consider the Coxeter system $(W,\{s,t,r\})$ with  $m(s,t)=5, m(s,r)=m(t,r)=3$, the element $w=trstst \in W$ and the matching associated with the right system $(J=\{s,r\}, s,t, M_{st})$ with $M_{st}(st)=tst$, $M_{st}(sts)=stst$, and $M_{st}(tsts)=tstst$. Then $M$ does not commute with the unique right multiplication special matching $\rho_t$.

\end{enumerate}
These observations similarly hold true, mutatis mutandis, also for Proposition \ref{double-rho2}.

\end{rem}

As an immediate consequence, we have the following result.
\begin{cor}
\label{semplicesemplice}
Let $W$ be a simply laced Coxeter group, $w \in W$, and $M$ a special matching of $w$. Then there exist a right multiplication matching  and a left multiplication matching  that commute with $M$ and such that do not agree with $M$ on $w$.
\end{cor}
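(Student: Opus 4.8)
The plan is to deduce the statement directly from Theorem~\ref{caratteri} together with the four Propositions~\ref{simply-lambda}, \ref{simply-lambda2}, \ref{double-rho}, and \ref{double-rho2}, using simple-lacedness only as the device that discards every exceptional configuration appearing in those propositions. Concretely, I would first invoke Theorem~\ref{caratteri} to realize $M$ as the matching associated with a right system or a left system $(J,s,t,M_{st})$ of $w$. The key observation is that each exceptional case listed in the four propositions explicitly requires $m(s,t)=4$; since $W$ is simply laced we have $m(s,t)\le 3$, so none of those exceptions can ever arise. Hence, whenever the hypothesis of one of these propositions is met, its conclusion holds with no caveat.

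With this in hand, I would run the case analysis on the type of system. Recall that to produce a \emph{left} multiplication matching commuting with $M$ and differing from it on $w$ one needs $M$ not to be a left multiplication matching, and then Proposition~\ref{simply-lambda} (right system) or Proposition~\ref{double-rho2} (left system) applies; symmetrically, to produce a \emph{right} multiplication matching one needs $M$ not to be a right multiplication matching, and then Proposition~\ref{double-rho} (right system) or Proposition~\ref{simply-lambda2} (left system) applies. Thus, when $M$ is neither a left nor a right multiplication matching, both desired matchings are obtained at once, regardless of whether $M$ comes from a right or a left system.

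The delicate point, which I expect to be the main obstacle, is the situation in which $M$ is itself a multiplication matching, since then the hypothesis ``$M$ is not a left (resp.\ right) multiplication matching'' fails for the proposition that would produce a matching of the \emph{same} type. Here I would exploit the fact that a multiplication matching is trivially associated with a system of $w$ (take $J=\{s\}$ and $M_{st}=\rho_s$ on the right, and symmetrically on the left), so Theorem~\ref{caratteri} still applies, and that a left multiplication matching is not a right multiplication matching, and conversely, away from the degenerate interval $[e,s]$. Consequently the proposition producing the matching of the \emph{opposite} type retains its hypothesis and furnishes a commuting multiplication matching that disagrees with $M$ on $w$. In this way every case yields a multiplication matching of the required type commuting with $M$ and not agreeing with it on $w$, with all the analytic content concentrated in the single step where $m(s,t)\le 3$ eliminates the listed exceptions; the remainder is bookkeeping over the four propositions and the two system types.
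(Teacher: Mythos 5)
Your proposal is correct and is essentially the paper's own proof: the paper disposes of Corollary~\ref{semplicesemplice} in a single line, citing exactly Theorem~\ref{caratteri} and Propositions~\ref{simply-lambda}, \ref{simply-lambda2}, \ref{double-rho}, \ref{double-rho2}, with simple-lacedness ($m(s,t)\leq 3$) discarding every exceptional case, just as in your first two paragraphs. The only place where you go beyond the paper is your third paragraph, and the subtlety you flag there is genuine: when $M$ is itself a multiplication matching, the same-type propositions are unavailable, and in fact the corollary as literally stated can then fail --- take $W$ of type $A_2$, $w=sts$, $M=\lambda_s$; the only other left multiplication matching of $w$ is $\lambda_t$, and $\lambda_s\lambda_t(e)=st\neq ts=\lambda_t\lambda_s(e)$, so no left multiplication matching commutes with $M$ and disagrees with it on $w$. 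Producing only the opposite-type matching in that case, as you do, is therefore the best one can achieve, and it is also all that the paper ever uses: in Corollary~\ref{semplicemente-allacciati}, an $H$-special matching that is already a left multiplication matching is calculating by definition, so Theorem~\ref{secommutano} (and hence this corollary) is only invoked when $M$ is not of the relevant multiplication type. One small inaccuracy: your claim that a left multiplication matching is never a right one away from the interval $[e,s]$ is not quite true (e.g.\ $\lambda_s=\rho_s$ on $[e,st]$ when $m(s,t)=2$), but this does not damage the argument, since in that residual case neither conditional claim has any content to prove.
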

\begin{proof}
The assertion follows from Theorem~\ref{caratteri} and Propositions~\ref{simply-lambda},  \ref{simply-lambda2}, \ref{double-rho},  \ref{double-rho2}.
\end{proof}

\section{Parabolic $R$-polynomials and $H$-special matchings}
\label{parabolici}

In this section, for all $H\subseteq S$ and $w\in W^H$, we give a method for computing the parabolic Kazhan--Lusztig $R$-polynomials $\{R_{u,w}^{H,x}(q)\}_{u\in W^H}$ from the only knowledge of: 
\begin{itemize}
\item the isomorphism type (as a poset) of the interval $[e,w]$,
\item  which  elements of the interval $[e,w]$ are  in $W^H$.
\end{itemize}
We prove this result  in the case $W$ is either a doubly laced Coxeter group (i.e., $m(r,r')\leq 4$, for all $r,r'\in S$), or
a dihedral Coxeter group (i.e., a Coxeter group of rank 2). In particular, the result holds for all Weyl groups. It is worth noting that this is also a result on parabolic Kazhdan--Lusztig polynomials since these are  equivalent to the parabolic $R$-polynomials.

\subsection{H-special matchings and calculating special matchings}
We now define the tools that compute the parabolic  Kazhan--Lusztig polynomials.

Let  $(W,S)$  be an arbitrary Coxeter system, $H \subseteq S$, and $w \in W^H$. An {\em $H$-special matching of $w$} is a special matching of $w$ such that 
$$u \leq w, u \in W^H, M(u) \lhd u \Rightarrow M(u) \in W^H.$$

Note that the $\emptyset$-special matchings are exactly the special matchings and that a left multiplication matching  is $H$-special 
for all $H \subseteq S$.

For convenience' sake, we say that an $H$-special matching $M$ of $w$ \emph{calculates} the parabolic Kazhdan--Lusztig $R$-polynomials (or  is \emph{calculating}, for short) if, for all $u \in W^H$, $u \leq w$, we have
\begin{equation}
 R_{u,w}^{H,x} (q)= \left\{ \begin{array}{ll}
R_{M(u),M(w)}^{H,x}(q), & \mbox{if $M(u)  \lhd u$,} \\
(q-1)R_{u,M(w)}^{H,x}(q)+qR_{M(u),M(w)}^{H,x}(q), & \mbox{if $M(u) \rhd u$
and $M(u) \in W^{H}$,} \\
(q-1-x)R_{u,M(w)}^{H,x}(q), & \mbox{if $M(u) \rhd u$ and  $M(u) \notin W^{H}$.} 
\end{array} \right. 
\end{equation}
For  this definition, it is essential that the special matching be $H$-special. Note that all left multiplication matchings are calculating. 

We want to show that all $H$-special matchings are calculating for all doubly laced Coxeter groups and all dihedral groups (indeed, we prove it for a larger class of situations). 

\subsection{Commuting matchings and calculating matchings}
We need the following easy lemma.
\begin{lem}
\label{coatomi-al-piu}
Let  $(W,S)$  be an arbitrary Coxeter system,  $H \subset S$, and $v \notin W^H$. 
Then at most one of the coatoms of $v$ belongs to $W^H$.
\end{lem}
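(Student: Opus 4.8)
The plan is to show that every coatom of $v$ that happens to lie in $W^H$ must coincide with one single element, namely $vh$ for a suitably chosen $h \in H$; since $v \notin W^H$ guarantees such an $h$ exists, this immediately bounds the number of such coatoms by one. The only ingredients needed are the definition of $W^H$ in terms of right descents, the Lifting Property, and the fact that Bruhat order is graded by length.

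First I would unwind the hypothesis. By definition $W^{H}=\{w\in W:\,D_R(w)\subseteq S\setminus H\}$, so $v\notin W^H$ means exactly that $D_R(v)\cap H\neq\emptyset$. I fix any $h\in D_R(v)\cap H$; then $\ell(vh)<\ell(v)$, hence $\ell(vh)=\ell(v)-1$ and $vh\lhd v$ is itself a coatom of $v$.

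Next, let $u\lhd v$ be an \emph{arbitrary} coatom with $u\in W^H$. Since $D_R(u)\subseteq S\setminus H$ and $h\in H$, we have $h\notin D_R(u)$, that is $\ell(uh)>\ell(u)$. Now $u<v$ together with $h\in D_R(v)$ and $h\notin D_R(u)$ puts us precisely in the situation of the Lifting Property (the right-handed version, matching the right-descent definition of $W^H$, as cited in the excerpt), which yields $u\leq vh$. A length count then finishes the argument: $u$ is a coatom, so $\ell(u)=\ell(v)-1=\ell(vh)$, and since Bruhat order is graded by $\ell$, the relation $u\leq vh$ with $\ell(u)=\ell(vh)$ forces $u=vh$. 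Thus every coatom of $v$ lying in $W^H$ equals the single element $vh$, so there is at most one.

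I do not expect a serious obstacle here; the lemma is genuinely short. The only points demanding care are bookkeeping rather than substance: invoking the Lifting Property in the form consistent with the right-descent convention used to define $W^H$, and observing that in a poset graded by $\ell$ a relation $u\le vh$ between elements of equal rank must be an equality. Everything else is a direct consequence of the definitions already in place.
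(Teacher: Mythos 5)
Your proof is correct, but it runs on a different engine than the paper's. The paper fixes $s \in H \cap D_R(v)$, writes $v = ys$ with $\ell(v)=\ell(y)+1$, and invokes the Deletion Property: every coatom of $v$ is obtained by deleting one letter from the reduced word $s_1\cdots s_{k-1}s$, and unless the deleted letter is the final $s$, the resulting word is still reduced and still ends in $s$, so every coatom except $y$ has $s$ as a right descent and hence lies outside $W^H$. You instead argue order-theoretically: any coatom $u \in W^H$ lacks $h$ as a right descent, so the Lifting Property (right-descent version of \cite[Proposition~2.2.7]{BB}, which the paper does cite) gives $u \leq vh$, and gradedness of Bruhat order by length forces $u = vh$. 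Both arguments isolate the same unique candidate $vh = y$; the trade-off is that your route avoids any manipulation of reduced expressions (no need to check that the deleted-letter word remains reduced), staying entirely within the poset-theoretic toolkit of lifting plus rank comparison, whereas the paper's route yields the marginally stronger conclusion that every other coatom has $s$ as an explicit right descent, not merely that it fails to be $vh$. Either proof is complete and appropriate here.
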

\begin{proof}
Since $v \notin W^H$, the set $H \cap D_{R}$ is non-empty. Let $s \in H \cap D_{R}$. Then there exists $y \in W$ such that $v= y \cdot s $ and $\ell(v)= \ell (y) + 1$. By the Deletion Property, $s$ is a right descent of all coatoms of $v$ except $y$.  
\end{proof}

In the proof of the following theorem and in the sequel, we use the inductive formula of Theorem~\ref{7.1} without explicit mention.
\begin{thm}
\label{secommutano}
Given a  Coxeter system  \( (W,S) \) and $H \subseteq S$, let $w \in W^H$ and $M$ be an $H$-special matching of $w$. Suppose that
\begin{itemize}
\item every $H$-special matching of $v$ is calculating, for all  \( v \in W ^H\), $v < w$, 
\item there exists a calculating special matching $N$ of $w$ commuting with $M$ and such that $M(w) \neq N(w)$.
\end{itemize}
Then $M$  is calculating.
\end{thm}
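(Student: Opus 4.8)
The plan is to prove that $M$ is calculating by relating the recurrence that $M$ induces to the recurrence that the known calculating matching $N$ induces, using the fact that $M$ and $N$ commute and that both are $H$-special matchings of $w$. I would fix an arbitrary $u\in W^H$ with $u\le w$ and verify the three-case formula of Theorem~\ref{7.1} with $M$ in the role of the calculating matching. The key structural input is Lemma~\ref{res}(2): since $M$ and $N$ are two special matchings of the graded poset $[e,w]$, the orbit $\langle M,N\rangle(u)$ is a dihedral interval. Because $M$ and $N$ commute, this orbit has at most four elements, namely $u$, $M(u)$, $N(u)$, and $M(N(u))=N(M(u))$; the degenerate cases (where some of these coincide, e.g.\ $M(u)=N(u)$ or $u$ is fixed by one of them) must be handled as well, but the generic case is a four-element ``square.''

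\textbf{The main computation.}
Having fixed such a square $\{u,\,M(u),\,N(u),\,NM(u)\}$, I would expand $R^{H,x}_{u,w}(q)$ using the \emph{calculating} matching $N$, whose recurrence is known by hypothesis. This expresses $R^{H,x}_{u,w}$ in terms of polynomials indexed by $N(u)$ and $u$ against the smaller top element $N(w)$. Since $N(w)<w$, every $H$-special matching of $N(w)$ (and of the even smaller intervals) is calculating by the first hypothesis; in particular $M$ restricts to an $H$-special matching of $N(w)$ (using Lemma~\ref{res}(1) together with the commutation $M(N(w))=N(M(w))$, so that $M$ maps $[e,N(w)]$ to itself), and so I may also apply the $M$-recurrence \emph{inside} $[e,N(w)]$. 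The goal is to show that applying $M$ first and then $N$, versus applying $N$ first and then $M$, yields the same final expression; this amounts to checking that the coefficient factors attached to each edge of the commuting square $M,N$ match up. Concretely I would compute $R^{H,x}_{u,w}$ two ways and show they agree with the right-hand side of the $M$-formula for $R^{H,x}_{u,w}$.

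\textbf{Case analysis on the edge types.}
The factors appearing in the recurrence depend on whether a given up-step lands inside or outside $W^H$ (the three branches $R_{M(u),M(w)}$, $(q-1)R+qR$, and $(q-1-x)R$). So I would organize the verification by the \emph{types} of the four edges of the square relative to $W^H$-membership: whether $M(u),N(u),NM(u)$ lie in $W^H$ or not, and their covering directions relative to $u$. Here Lemma~\ref{coatomi-al-piu} is the crucial tool: it says that below an element \emph{not} in $W^H$ at most one coatom lies in $W^H$, which sharply constrains how many of the square's vertices can be minimal coset representatives and rules out the otherwise problematic combinations. The $H$-special hypothesis on $M$ (that $M$ sends a down-covering of a $W^H$-element back into $W^H$) is what guarantees the $M$-edges have consistent type so that the branch chosen by $M$ matches the branch forced by $N$.

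\textbf{Where the difficulty lies.}
I expect the main obstacle to be the bookkeeping in the ``mixed'' cases, where $u\in W^H$ but one of $M(u),N(u)$ leaves $W^H$: there the two computations route through different branches of the recurrence (one picking up a factor $(q-1-x)$, the other a factor $q$ or $(q-1)$), and showing the products of factors around the square coincide is the heart of the matter. Verifying that the commuting-square identity $q^{-1}$-symmetry of the $R$-recurrence closes up in each admissible edge-type configuration—while Lemma~\ref{coatomi-al-piu} eliminates the inadmissible ones—is the delicate step; the degenerate small-orbit cases ($|\langle M,N\rangle(u)|<4$, or $u$ fixed by $M$ or $N$) should then follow as easier specializations of the same identity.
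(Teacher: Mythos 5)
Your proposal follows essentially the same route as the paper's proof: expand $R^{H,x}_{u,w}$ via the calculating matching $N$, apply the restriction of $M$ (an $H$-special matching of $[e,N(w)]$, hence calculating by the first hypothesis) inside $[e,N(w)]$, use $MN(w)=NM(w)$ to switch squares, close up via $N$ inside $[e,M(w)]$, and organize the verification by the $W^H$-membership pattern of the orbit $\{u,M(u),N(u),NM(u)\}$, with Lemma~\ref{coatomi-al-piu} and $H$-specialness ruling out the bad configurations. This is exactly the commuting-square computation the paper carries out (it cites \cite{BCM1} for the subcase where the whole orbit lies in $W^H$ and checks the remaining five four-element configurations plus the two-element orbit case explicitly), so your sketch is correct and structurally identical.
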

\begin{proof}
We prove the claim by induction on $\ell (w)$, the result being clearly
true if $\ell (w) \leq 2$. So assume $\ell (w) \geq 3$, and let $u\in [e,w]$, $u\in W^H$. 
The orbit  of $w$ under the action of the group generated by $M$ and $N$ is $\langle N,M \rangle(w)=\{w, N(w), M(w), MN(w)=NM(w)\}$ and is contained in $W^H$ since both $N$ and $M$ are $H$-special. On the other hand, the orbit $\langle N,M \rangle( u)$  can have either cardinality 4 or cardinality 2, and can be contained in $W^H$ or not. We may assume that it is not contained in $W^H$ as otherwise we could prove that $M$ is calculating by the same arguments as in \cite[Theorem~7.8]{BCM1}. 

If the cardinality of  $\langle N,M \rangle( u)$ is $2$, then $u \lhd N(u)=M(u)\notin W^H$, since otherwise the orbit would be contained in  $W^H$  since $N$ and $M$ are $H$-special. In this case, by our induction hypothesis
\begin{eqnarray*}
R^{H,x}_{u,w} (q)&=& (q-1-x) R^{H,x}_{u,N(w)}= (q-1-x)^2 R^{H,x}_{u,MN(w)} \\
&=&   (q-1-x)^2 R^{H,x}_{u,NM(w)} = (q-1-x) R^{H,x}_{u,M(w)},\\
\end{eqnarray*}
as desired. (Here and in the sequel, we use the fact that $M$ restricts to a special matching of $N(w)$ and hence we can use the induction hypothesis since $\ell (N(w) ) < \ell (w)$.)

Now assume that  the cardinality of  $\langle N,M \rangle( u)$ is $4$. By Lemma~\ref{coatomi-al-piu} and the fact that $N$ and $M$ are $H$-special, there are 5 cases to be considered.\\
(a) $u \lhd N(u) \notin W^H$, $u \rhd M(u) \in W^H$, $NM(u)= MN(u) \notin W^H$. \\
By our induction hypothesis, 
\begin{eqnarray*}
R^{H,x}_{u,w}  (q)&= &(q-1-x) R^{H,x}_{u,N(w)}=   (q-1-x) R^{H,x}_{M(u),MN(w)}\\
&=  & (q-1-x) R^{H,x}_{M(u),NM(w)} =   R^{H,x}_{M(u),M(w)}.
\end{eqnarray*}
(b) $u \rhd N(u) \in W^H$, $u \lhd M(u) \notin W^H$, $NM(u)= MN(u) \notin W^H$. \\
By our induction hypothesis, 
\begin{eqnarray*}
R^{H,x}_{u,w} (q) &= & R^{H,x}_{N(u),N(w)}=   (q-1-x) R^{H,x}_{N(u),MN(w)} \\
&=  & (q-1-x) R^{H,x}_{N(u),NM(w)}  =  (q-1-x) R^{H,x}_{u,M(w)}.
\end{eqnarray*}
(c) $u \lhd N(u) \in W^H$, $u \lhd M(u) \notin W^H$, $NM(u)= MN(u) \notin W^H$. \\
By our induction hypothesis, 
\begin{eqnarray*}
R^{H,x}_{u,w}  (q) &= & (q-1) R^{H,x}_{u,N(w)} +  q R^{H,x}_{N(u),N(w)}\\
 &= &(q-1) (q-1-x) R^{H,x}_{u,MN(w)} +q  (q-1-x) R^{H,x}_{N(u),MN(w)} \\
&=  &(q-1-x)((q-1) R^{H,x}_{u,NM(w)} +q  R^{H,x}_{N(u),NM(w)}) \\
& = & (q-1-x) R^{H,x}_{u,M(w)}.
\end{eqnarray*}
(d) $u \lhd N(u) \notin W^H$, $u \lhd M(u) \in W^H$, $NM(u)= MN(u) \notin W^H$. \\
By our induction hypothesis, 
\begin{eqnarray*}
R^{H,x}_{u,w} (q) &= & (q-1-x) R^{H,x}_{u,N(w)} \\
&=  &(q-1-x)((q-1) R^{H,x}_{u,MN(w)} +q  R^{H,x}_{M(u),MN(w)}) \\
&=  &(q-1-x)((q-1) R^{H,x}_{u,NM(w)} +q  R^{H,x}_{M(u),NM(w)}) \\
&= & (q-1) R^{H,x}_{u,M(w)}  +q  R^{H,x}_{M(u),M(w)}.
\end{eqnarray*}
(e) $u \lhd N(u) \notin W^H$, $u \lhd M(u) \notin W^H$, $NM(u)= MN(u) \notin W^H$. \\
By our induction hypothesis, 
\begin{eqnarray*}
R^{H,x}_{u,w} (q) &= & (q-1-x) R^{H,x}_{u,N(w)} =  (q-1-x)^2 R^{H,x}_{u,MN(w)} \\
 &= & (q-1-x)^2 R^{H,x}_{u,NM(w)}=  (q-1-x) R^{H,x}_{u,M(w)}.
\end{eqnarray*}
The proof is complete.
\end{proof}

The following result is a special case of Theorem~\ref{doppiamente-allacciati}. Since it concerns an important class of Coxeter groups, including the symmetric group, and its proof is much simpler than the proof of Theorem~\ref{doppiamente-allacciati}, we give it here explicitely.
\begin{cor}
\label{semplicemente-allacciati}
Let \( (W,S) \) be a simply laced Coxeter system, $H$ be any arbitrary subset of $S$ and $ w$ be any arbitrary element  of $ W ^H$. Then every $H$-special matching $M$ of \( w \) calculates the $R^{H,x}$-polynomials.
\end{cor}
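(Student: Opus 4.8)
The plan is to argue by induction on $\ell(w)$, feeding at each stage a suitable commuting calculating matching into Theorem~\ref{secommutano}.

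First I would set up strong induction on $\ell(w)$, the base cases $\ell(w)\le 2$ being immediate by direct inspection of the short intervals involved. For the inductive step, fix $w\in W^H$ and an $H$-special matching $M$ of $w$, and take as induction hypothesis that every $H$-special matching of every $v\in W^H$ with $v<w$ is calculating. This is exactly the first of the two hypotheses demanded by Theorem~\ref{secommutano}. If $M$ is itself a left multiplication matching, then $M$ is calculating by the observation recorded just after the definition of a calculating matching, and there is nothing more to do; so I may assume from now on that $M$ is \emph{not} a left multiplication matching.

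The crux is then to exhibit a single special matching $N$ of $w$ that is calculating, commutes with $M$, and satisfies $N(w)\ne M(w)$; granting this, Theorem~\ref{secommutano} at once yields that $M$ is calculating. To construct $N$, I would invoke Theorem~\ref{caratteri} to realize $M$ as the matching associated with either a right system or a left system $(J,s,t,M_{st})$. Since $W$ is simply laced, $m(s,t)\le 3$, whereas each of the exceptional configurations listed in Propositions~\ref{simply-lambda} and~\ref{double-rho2} requires $m(s,t)=4$; hence none of them can occur. Therefore, if $M$ arises from a right system, Proposition~\ref{simply-lambda} produces a left multiplication matching $\lambda$ commuting with $M$ with $\lambda(w)\ne M(w)$, while if $M$ arises from a left system, the symmetric Proposition~\ref{double-rho2} does the same (both being applicable precisely because $M$ is not a left multiplication matching). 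In either case $\lambda$ is a left multiplication matching, hence automatically $H$-special and calculating, so I set $N=\lambda$. This existence statement is exactly the content of Corollary~\ref{semplicesemplice}, which one may cite instead, provided the easy case where $M$ is itself a multiplication matching has already been peeled off as above.

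The only point requiring genuine care, and the step I would regard as the true obstacle, is the verification that the excluded configurations of Propositions~\ref{simply-lambda} and~\ref{double-rho2} are vacuous in the present setting: this is where the hypothesis $m(s,t)\le 3$ is essential, and it is precisely the obstruction that, in the merely doubly laced case, forces the far more elaborate analysis carried out in Theorem~\ref{doppiamente-allacciati}. Once $N$ has been produced, the conclusion is purely formal and the induction closes.
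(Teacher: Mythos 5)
Your proposal is correct and takes essentially the same route as the paper: the paper's own proof is a one-line induction combining Corollary~\ref{semplicesemplice} (whose proof is precisely your appeal to Theorem~\ref{caratteri} together with Propositions~\ref{simply-lambda} and~\ref{double-rho2}, the $m(s,t)=4$ exceptional configurations being vacuous since $W$ is simply laced) with Theorem~\ref{secommutano} and the fact that left multiplication matchings are calculating by definition. Your extra step of first peeling off the case where $M$ is itself a left multiplication matching is exactly the correct way to meet the hypotheses of those propositions.
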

\begin{proof}
The assertion follows immediately by induction using Corollary~\ref{semplicesemplice} and Theorem~\ref{secommutano}, since  left multiplication matchings are calculating by definition.
\end{proof}

\subsection{Doubly laced Coxeter groups}
We now prove that every $H$-special matching calculates the $R^{H,x}$-polynomials of doubly laced Coxeter groups.
The following easy lemma is needed in the proof.
\begin{lem}
\label{calcoli}
Let $(W,S)$ be a Coxeter system, $s,t \in S$,  $m(s,t)\geq4$, $H \subseteq S$.   Let $w\in W$ be such that $t \not \leq w$ and $st   w  \in W^H$. Let $v \leq w$ be such that  $s\notin D_{L}(v)$ and $\{ v, sv, tv, stv\} \subseteq W^H$. Then
$$R^{H,x}_{sv,stw}= R^{H,x}_{tv,stw}. $$
\end{lem}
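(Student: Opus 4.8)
The plan is to compute both sides by peeling off left descents with the recursion of Theorem~\ref{7.1}, and to observe that the two ``cross terms'' that obstruct a naive term-by-term comparison actually vanish for length reasons.

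First I would record the consequences of $t\not\le w$. Since $t$ has length one, $t\not\le w$ forces $t\notin D_L(w)$, hence $\ell(tw)=\ell(w)+1$; and writing $w={}_{\{s,t\}}w\cdot{}^{\{s,t\}}w$, the left dihedral factor ${}_{\{s,t\}}w$ must omit $t$ (otherwise $t\le w$), so ${}_{\{s,t\}}w\in\{e,s\}$. In either case the left dihedral factor of $tw$ is $t$ or $ts$, whose unique left descent among $\{s,t\}$ is $t$ (here $m(s,t)\ge4$ ensures $s\notin D_L(ts)$); thus $s\notin D_L(tw)$, giving $\ell(stw)=\ell(tw)+1=\ell(w)+2$ and $s\in D_L(stw)$ with $s\cdot stw=tw$. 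I would also invoke the elementary closure property that $z\in W^H$ and $r\in D_L(z)$ imply $rz\in W^H$ (if some $h\in H\cap D_R(rz)$ existed, then $\ell(r(zh))\le\ell(z)-2$, impossible since $\ell(zh)=\ell(z)+1$ and left multiplication by $r$ changes length by one): applied to $stw\in W^H$ and then to $tw$, it gives $tw\in W^H$ and $w\in W^H$, so every $R$-polynomial below is well defined. Finally, $s,t\notin D_L(v)$ makes $v$ minimal in its left $W_{\{s,t\}}$-coset, so lengths add and $s\notin D_L(tv)$, $t\in D_L(tv)$, $t\notin D_L(stv)$.

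Next I would apply the recursion with $s\in D_L(stw)$. For the bottom $sv$ one has $s\in D_L(sv)$, so the first case gives $R^{H,x}_{sv,stw}=R^{H,x}_{v,tw}$. For the bottom $tv$ one has $s\notin D_L(tv)$ and $stv\in W^H$, so the second case gives $R^{H,x}_{tv,stw}=(q-1)R^{H,x}_{tv,tw}+qR^{H,x}_{stv,tw}$. I then peel $t\in D_L(tw)$: since $t\notin D_L(v)$ and $tv\in W^H$, the second case yields $R^{H,x}_{v,tw}=(q-1)R^{H,x}_{v,w}+qR^{H,x}_{tv,w}$, while $t\in D_L(tv)$ gives $R^{H,x}_{tv,tw}=R^{H,x}_{v,w}$.

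The key point — and what first looks like the only real obstacle, namely the leftover terms $R^{H,x}_{tv,w}$ and $R^{H,x}_{stv,tw}$ which cannot be reduced further since $t\notin D_L(w)$ — is that both of these vanish. Indeed $t\le tv$, so $tv\le w$ would force $t\le w$; hence $tv\not\le w$ and $R^{H,x}_{tv,w}=0$. Likewise $t\le stv$, and if $stv\le tw$ then, as $t\in D_L(tw)$ while $t\notin D_L(stv)$, the Lifting Property (\cite[Proposition~2.2.7]{BB}) gives $stv\le t\cdot tw=w$, whence $t\le w$, a contradiction; so $stv\not\le tw$ and $R^{H,x}_{stv,tw}=0$. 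Substituting, both $R^{H,x}_{sv,stw}$ and $R^{H,x}_{tv,stw}$ collapse to $(q-1)R^{H,x}_{v,w}$, which proves the claimed equality. The subtlety I would be most careful about is well-posedness, i.e.\ checking that all the indices that appear (notably $tw$ and $w$) lie in $W^H$, which is exactly what the closure property above secures.
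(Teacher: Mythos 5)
Your proof is correct and is essentially the paper's own argument: both compute $R^{H,x}_{sv,stw}=R^{H,x}_{v,tw}$ and $R^{H,x}_{tv,stw}=(q-1)R^{H,x}_{tv,tw}+qR^{H,x}_{stv,tw}$ via the recursion of Theorem~\ref{7.1}, then use $t\not\leq w$ to kill the cross terms $R^{H,x}_{tv,w}$ and $R^{H,x}_{stv,tw}$, so that both sides collapse to $(q-1)R^{H,x}_{v,w}$. The only difference is that you spell out details the paper leaves implicit (the length bookkeeping, the closure of $W^H$ under removing left descents, and the Lifting Property argument for $stv\not\leq tw$), all of which are correct.
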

\begin{proof}
By hypothesis, $\ell (tsv)=\ell(stv)=\ell(sv)+1=\ell(tv)+1=\ell(v)+2$. 
We have
$$R^{H,x}_{sv,stw}=R^{H,x}_{v,tw}= (q-1) R^{H,x}_{v,w} + q  R^{H,x}_{tv,w}=  (q-1) R^{H,x}_{v,w}$$
 (where the last equality holds since $t \not \leq w$ and hence  $tv \not \leq w$), and

$$ R^{H,x}_{tv,stw}=  (q-1) R^{H,x}_{tv,tw} + q  R^{H,x}_{stv,tw} =   (q-1) R^{H,x}_{tv,tw} =   (q-1) R^{H,x}_{v,w}$$ (where the second equality holds since $t \not \leq w$ and hence  $stv \not \leq tw$).
\end{proof}

\begin{thm}
\label{doppiamente-allacciati}
Let \( (W,S) \) be a doubly laced Coxeter system, $H$ be any arbitrary subset of $S$ and $ w$ be any arbitrary element  of $ W ^H$. Then every $H$-special matching  of \( w \) calculates the $R^{H,x}$-polynomials.
\end{thm}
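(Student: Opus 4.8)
The plan is to argue by induction on $\ell(w)$, with Theorem~\ref{secommutano} as the inductive engine. Assuming that every $H$-special matching of every $v\in W^H$ with $v<w$ is calculating, it suffices, for a fixed $H$-special matching $M$ of $w$, either to observe that $M$ is already calculating or to exhibit a calculating special matching $N$ of $w$ commuting with $M$ and satisfying $N(w)\neq M(w)$. The base cases $\ell(w)\leq 2$ are immediate, and if $M$ is a left multiplication matching we are done, since such matchings calculate by definition; so I may assume $M$ is not a left multiplication matching. This is the same scheme that proves the simply laced case in Corollary~\ref{semplicemente-allacciati}, now refined to accommodate the places where $m(s,t)=4$.

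First I would invoke Theorem~\ref{caratteri} to realize $M$ as the matching associated with a right or a left system $(J,s,t,M_{st})$. Since left multiplication matchings are automatically calculating, the natural candidate for $N$ is a commuting left multiplication matching: Proposition~\ref{simply-lambda} supplies one with $\lambda(w)\neq M(w)$ when $M$ comes from a right system, and Proposition~\ref{double-rho2} does so when $M$ comes from a left system. In either case Theorem~\ref{secommutano} at once gives that $M$ calculates. This settles every configuration except the exceptional ones enumerated in Propositions~\ref{simply-lambda} and \ref{double-rho2}, all of which force $m(s,t)=4$; this is precisely where the doubly laced hypothesis is used.

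It remains to handle these finitely many exceptional configurations, in which no commuting left multiplication matching $\lambda$ with $\lambda(w)\neq M(w)$ exists. There $w$ has the explicit factorization recorded in the propositions (for example $w=tst\cdot{}^{\{s\}}(w_J)$, or $w=tsts\cdot{}^{\{s\}}(w_J)$ with $(w^J)^{\{s,t\}}=e$), so that $w$ is a short word of $W_{\{s,t\}}$ times a factor interacting tamely with $s$. Because a right multiplication matching need not be calculating, I would not reapply Theorem~\ref{secommutano} but instead verify the recurrence~(\ref{calcola}) for $M$ directly, one $u\in W^H$ at a time. The organizing tool is a commuting right multiplication matching $\rho$, produced by Proposition~\ref{double-rho} or Proposition~\ref{simply-lambda2}; their own exceptional case requires a trivial non-dihedral factor and so is disjoint from the present one whenever $w$ is not itself dihedral. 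Using $\rho$ to describe the orbit $\langle M,\rho\rangle(u)$, and tracking which of its elements lie in $W^H$ by means of Lemma~\ref{coatomi-al-piu}, the crucial coincidences of parabolic $R$-polynomials forced by the braid relation $m(s,t)=4$ — exactly those of the shape $R^{H,x}_{sv,stw'}=R^{H,x}_{tv,stw'}$ — are provided by Lemma~\ref{calcoli}, and combining these with the induction hypothesis should close each branch of~(\ref{calcola}).

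The main obstacle I expect lies in this last step, and especially in the \emph{Second case} of Proposition~\ref{simply-lambda} and its mirror, where $w_0(s,t)=stst=tsts$ realizes the full rank-two group with $m(s,t)=4$: there the orbit $\langle M,\rho\rangle(u)$ is largest, several patterns of $W^H$-membership among its elements must be separated, and Lemma~\ref{calcoli} has to be applied repeatedly and reconciled with the three branches of~(\ref{calcola}). Finally, the genuinely dihedral systems, where $S=\{s,t\}$ and the non-dihedral factor is absent, escape the commuting-multiplication arguments entirely and should be disposed of by a direct rank-two computation, as the remarks following Propositions~\ref{simply-lambda} and \ref{double-rho} already foreshadow.
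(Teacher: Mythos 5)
Your reduction is exactly the paper's: induct on $\ell(w)$, dispose of a non-exceptional $M$ by feeding Theorem~\ref{secommutano} a commuting left multiplication matching with $\lambda(w)\neq M(w)$ (via Theorem~\ref{caratteri} and Propositions~\ref{simply-lambda} and~\ref{double-rho2}), so that only the listed $m(s,t)=4$ configurations survive. The genuine gap is in your treatment of those surviving configurations. You propose to organize a direct verification of~(\ref{calcola}) around a commuting \emph{right} multiplication matching $\rho$ (from Propositions~\ref{double-rho} and~\ref{simply-lambda2}) and the orbits $\langle M,\rho\rangle(u)$. This cannot work as stated: Deodhar's recurrence (Theorem~\ref{7.1}) descends only along \emph{left} descents, so a right multiplication matching gives no way to express $R^{H,x}_{u,w}$ through polynomials with a smaller top element, and hence no way to invoke either the induction hypothesis or Lemma~\ref{calcoli}; moreover $\langle M,\rho\rangle(u)=\{u,ur,M(u),M(u)r\}$ is right-coset shaped, whereas the coincidences of Lemma~\ref{calcoli} concern the left translates $v,sv,tv,stv$, so the two structures never meet. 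Having (correctly) renounced Theorem~\ref{secommutano} for $\rho$, you are left with no mechanism at all for computing $R^{H,x}_{u,w}$ in these cases.

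The idea you are missing is that in every exceptional configuration one has $w=tst\cdot w'$ (suitably interpreted) and, crucially, $M(w)=st\cdot w'=tw=\lambda_t(w)$: the left multiplication matching $\lambda_t$ commutes with $M$ but \emph{agrees} with it at $w$ — exactly why Theorem~\ref{secommutano} is unavailable, and exactly what makes a direct check possible. Because $M(w)=tw$, Deodhar's recurrence in the descent $t$ already produces polynomials whose top element is $M(w)$, the same top element appearing in~(\ref{calcola}). The paper then classifies the orbits $\langle M,\lambda_t\rangle(u)$, which by Proposition~\ref{4.3} have size $2$ (trivial) or $4$, the latter having the left-coset form $\{y_J,sy_J,ty_J,tsy_J\}$ with $s\notin D_L(y_J)$ and $t\not\leq y_J$ — precisely the setting of Lemma~\ref{calcoli} — and verifies~(\ref{calcola}) for each pattern of $W^H$-membership in the orbit, using Lemma~\ref{coatomi-al-piu} and the Word Property to exclude two patterns, and, in one surviving pattern, the identity $(q-1-x)(q-1)+q=(q-1-x)^2$, i.e.\ $x^2=q+(q-1)x$. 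Right multiplication matchings never enter. Note finally that the residual case $w=tst$, where even your $\rho$ fails to exist, is not confined to dihedral \emph{systems}: it is the case of a dihedral lower \emph{interval} $[e,w]$, which occurs inside any doubly laced $W$; the paper's Case~1 computation covers it uniformly, whereas your plan would need a separate fallback to the dihedral-interval argument of Theorem~\ref{diedrali}.
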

\begin{proof}
We proceed by induction on $\ell(w)$, the case $\ell(w)\leq 1$ being trivial.

Let $M$ be a  $H$-special matching  of \( w \). We assume that $M$ is not a left multipication matching, since left multiplication matchings are calculating by definition.

If there exists a left multiplication matching $\lambda$ commuting with $M$ and such that $\lambda(w) \neq M(w)$, then we can conclude by Theorem~\ref{secommutano}. 
We assume that such a left multiplication matching does not exist. By Theorem~\ref{caratteri} and Propositions~\ref{simply-lambda} and \ref{double-rho2}, necessarily $m(s,t)=4$ and
we are in one of the following cases:\\
{\bf Case 1:} $M$ is associated with a right system $(J,s,t,M_{st})$ and
\begin{enumerate}
\item $w_0(s,t)=tst $,   
\item $(w^J)^{\{s,t\}}= e$, 
\item $w=tst \cdot \, ^{\{s\}}(w_{J})$,
\end{enumerate}
{\bf Case 2:} $M$ is associated with a right system $(J,s,t,M_{st})$ and
\begin{enumerate}
\item  $w_0(s,t)=tsts=stst$,    
\item $(w^J)^{\{s,t\}}= e$,
\item either 
\begin{itemize}
\item $w=tsts \cdot \, ^{\{s\}}(w_{J})$, or 
\item $w=tst \cdot \, ^{\{s\}}(w_{J})$ with $s\leq \, ^{\{s\}}(w_{J})$,
\end{itemize}
\item $M(tsts)=sts$ (and then $M_{st}$ must be the matching mapping $e$ to $s$, $t$ to $ts$, $st$ to $tst$, and $sts$ to $tsts$).
\end{enumerate}
{\bf Case 3:} $M$ is associated with a left system $(J,s,t,M_{st})$ and
\begin{enumerate}
\item $w_0(s,t)=tst $,    
\item $_Jw= e$,
\item $w=  tst  \cdot \, ^{\{s,t\}} (^{J}w)$.
\end{enumerate}.

{\bf Case 1.} Notice that, in this case, $s \not\leq \, ^{\{s\}}(w_{J})$  (as otherwise $tsts$ would be $\leq w$ by the Subword Property), $ w_J= \, ^{\{s\}}(w_{J})$, $w=tst \cdot \, w_{J}$ and $M(w)= st \cdot \, w_{J}$.
Then $t\in D_L(w)$ and $\lambda_t$ is a special matching of $w$. $M$ and $\lambda_t$ commute but they agree on $w$ (i.e., $|\langle M, \lambda_t \rangle (w)| = 2$). 
 The lower dihedral interval $[e, w_0(s,t)]$ contains two orbits of $\langle M,\lambda_t\rangle $: one orbit with 2 elements and the other with 4 elements; hence every orbit of $\langle M,\lambda_t\rangle $ has either 2 or 4 elements,  by Proposition~\ref{4.3}. 

Let $u \leq w$, $u \in W^H$.  We need to show that
\begin{equation}
 R_{u,w}^{H,x} (q)= \left\{ \begin{array}{ll}
R_{M(u),M(w)}^{H,x}(q), & \mbox{if $M(u)  \lhd u$,} \\
(q-1)R_{u,M(w)}^{H,x}(q)+qR_{M(u),M(w)}^{H,x}(q), & \mbox{if $M(u) \rhd u$
and $M(u) \in W^{H}$,} \\
(q-1-x)R_{u,M(w)}^{H,x}(q), & \mbox{if $M(u) \rhd u$ and  $M(u) \notin W^{H}$.} 
\end{array} \right. 
\end{equation}
This is trivial if also $ |\langle M, \lambda_t \rangle (u)|=2 $ so
assume  $ |\langle M, \lambda_t \rangle (u)|=4 $.  Since $M$ agrees with $\lambda_t$ on $st$ and $tst$,  necessarily $ \langle M, \lambda_t \rangle (u)= \{y_J, sy_J, ty_J, tsy_J\} $, for a certain $y_J \in W_J$ with $s \notin D_L(y_J)$. Note that  $M(y_J)=sy_J$, $M(ty_J)=tsy_J$, and $t \not \leq y_J$ since $t\notin J$. 

Assume that  $ \langle M, \lambda_t \rangle (u)= \{y_J, sy_J, ty_J, tsy_J\}$ is contained in $W^H$ and note that, by the Word Property (Theorem~\ref{tits}),  also $sty_J$ 
is in $W^H$ since $t \not \leq y_J$ and no braid move could involve the leftmost  letters $s,t$ (so we cannot obtain a reduced expression ending with a letter in $H$). We distinguish the following 4 cases according to which of the elements in the orbit is $u$.
\begin{enumerate}

\item[(a)] $u= tsy_J$. Then
$$R^{H,x}_{u,w}= R^{H,x}_{ts  y_J,tst  w_J}= R^{H,x}_{sy_J,st  w_J}= R^{H,x}_{ty_J,st w_J } = R^{H,x}_{M(u),M(w)}, $$
where the third equality follows from Lemma~\ref{calcoli}.

\item[(b)] $u = s y_J$. Then
$$R^{H,x}_{u,w} = R^{H,x}_{sy_J,tst w_J} =(q-1) R^{H,x}_{sy_J,stw_J} + q  R^{H,x}_{tsy_J,stw_J} =(q-1) R^{H,x}_{sy_J,stw_J}=(q-1) R^{H,x}_{y_J,tw_J}$$
(where the third equality holds since $s \not \leq w_J$ implies  $tsy_J  \not \leq stw_J$) and
$$ R^{H,x}_{M(u),M(w)}=  R^{H,x}_{y_J, stw_J}= (q-1) R^{H,x}_{y_J, t w_J} + q    R^{H,x}_{sy_J, tw_J}\\
=(q-1) R^{H,x}_{y_J,tw_J}$$
(where the third equality holds since $s \not \leq w_J$  implies  $sy_J  \not \leq tw_J$).

\item[(c)] $u = t y_J$. Then
\begin{eqnarray*}
R^{H,x}_{u,w} &=& R^{H,x}_{ty_J,tst w_J} = R^{H,x}_{y_J,st w_J} =  (q-1) R^{H,x}_{y_J,tw_J} + q  R^{H,x}_{sy_J,tw_J} =  (q-1) R^{H,x}_{y_J,tw_J}\\ & =&  (q-1) \Big( (q-1) R^{H,x}_{y_J, w_J} + q  R^{H,x}_{ty_J, w_J} \Big ) = (q-1)^2 R^{H,x}_{y_J, w_J} , 
\end{eqnarray*}
where the fourth equality holds since $s \not \leq w_J$  implies  $sy_J  \not \leq tw_J$, and the last equality follows since $t \not \leq w_J$.
On the other hand 
\begin{eqnarray*}
(q-1) R^{H,x}_{u,M(w)} + q R^{H,x}_{M(u),M(w)}&=& (q-1) R^{H,x}_{ty_J,stw_J} + q R^{H,x}_{tsy_J,stw_J}\\
&=& (q-1) R^{H,x}_{ty_J,stw_J} \\
&=&(q-1)\Big( (q-1) R^{H,x}_{t y_J, tw_J} + q   R^{H,x}_{s t y_J, tw_J} \Big ) \\
&=&   (q-1)^2  R^{H,x}_{ty_J, tw_J}  \\
&=&   (q-1)^2  R^{H,x}_{y_J, w_J},
\end{eqnarray*}
where the second and the fourth equalities  hold since $s \not \leq w_J$  implies both $tsy_J  \not \leq stw_J$ and $sty_J  \not \leq tw_J$.

\item[(d)] $u = y_J$. Then
\begin{eqnarray*}
R^{H,x}_{u,w} &=& R^{H,x}_{y_J,tst w_J} =(q-1) R^{H,x}_{y_J,st  w_J} + q  R^{H,x}_{ty_J,st w_J} =(q-1) R^{H,x}_{y_J,st w_J} + q  R^{H,x}_{sy_J,st w_J} \\
&= & (q-1) R^{H,x}_{u,M(w)} + q R^{H,x}_{M(u),M(w)},
\end{eqnarray*}
where the third equality follows from Lemma~\ref{calcoli}.

\end{enumerate}

Assume now that  $ \langle M, \lambda_t \rangle (u)= \{y_J, sy_J, ty_J, tsy_J\}$ is not contained in $W^H$. The case  
 $\{y_J, sy_J, ty_J, tsy_J\} \cap W^H =\{y_J, sy_J, ty_J\} $ is impossible by Lemma~\ref{coatomi-al-piu}. 

Also the case  
 $\{y_J, sy_J, ty_J, tsy_J\} \cap W^H =\{y_J, sy_J\} $ is impossible. Indeed, in this case $sy_J$ would be the unique coatom of $ts y_J$ in $W^H$, i.e. $tsy_J= s y_J h$ for a certain $h \in H$. Since $t \not \leq s y_J$, $h=t$: being $m(s,t)>2$, this is in contradiction with the Word Property (Theorem~\ref{tits}).  

If  $\{y_J, sy_J, ty_J, tsy_J\} \cap W^H =\{y_J\} $, then clearly $u= y_J$ and 
$$ R^{H,x}_{u,w}=R^{H,x}_{y_J,tst w_J}= (q-1-x) R^{H,x}_{y_J, st w_J}=(q-1-x) R^{H,x}_{u,M(w)},$$
and the assertion follows.

It remains to treat the case  $\{y_J, sy_J, ty_J, tsy_J\} \cap W^H =\{y_J, ty_J\} $, in which we have two possibilities for $u$.

\begin{enumerate}

\item[(1)] $u = t y_J$. Then
\begin{eqnarray*}
R^{H,x}_{u,w} &=& R^{H,x}_{ty_J,tst w_J} = R^{H,x}_{y_J,stw_J} =  (q-1-x) R^{H,x}_{y_J,tw_J} \\
&=& (q-1-x) \Big( (q-1) R^{H,x}_{y_J, w_J} + q  R^{H,x}_{ty_J, w_J} \Big )  = (q-1-x)  (q-1) R^{H,x}_{y_J, w_J}, 
\end{eqnarray*}
where the last equality follows since $t \not \leq w_J$.
On the other hand 
\begin{eqnarray*}
(q-1-x) R^{H,x}_{u,M(w)} &=& (q-1-x) R^{H,x}_{ty_J,st w_J} =(q-1-x)\Big( (q-1) R^{H,x}_{t y_J, tw_J} + q   R^{H,x}_{s t y_J, tw_J} \Big )  \\
&=&(q-1-x)(q-1) R^{H,x}_{ ty_J, tw_J}=(q-1-x)(q-1) R^{H,x}_{ y_J, w_J},
\end{eqnarray*}
where 
\begin{itemize}
\item  the second equality follows by the fact that $ty_J \in W^H$ implies $sty_J \in W^H$ since all reduced expressions of  $sty_J$ start with $s$ by the Word Property (Theorem~\ref{tits}),
\item the third equality holds since $s \not \leq w_J$.
\end{itemize}
\item[(2)] $u = y_J$. Then
\begin{eqnarray*}
R^{H,x}_{u,w} &=& R^{H,x}_{y_J,tstw_J} =(q-1) R^{H,x}_{y_J,stw_J} + q  R^{H,x}_{ty_J,stw_J} \\
&=&(q-1) (q-1-x) R^{H,x}_{y_J,tw_J} + q \Big ( (q-1) R^{H,x}_{ty_J, tw_J} + q R^{H,x}_{sty_J, tw_J}\Big ) \\
&=&(q-1) (q-1-x) \Big ( (q-1) R^{H,x}_{y_J,w_J} + qR^{H,x}_{t y_J,w_J}  \Big) + q  (q-1) R^{H,x}_{y_J,   w_J}  \\
&=&(q-1) [(q-1-x)  (q-1)  + q] R^{H,x}_{y_J, w_J},  
\end{eqnarray*}
where the third equality follows (as before) by the fact that $ty_J \in W^H$ implies $sty_J \in W^H$, the  fourth and the fifth equalities follow since $s,t \not \leq w_J$. 
On the other hand, we have 
\begin{eqnarray*}
(q-1-x) R^{H,x}_{u,M(w)} &=& (q-1-x) R^{H,x}_{y_J,st w_J} =(q-1-x)^2   R^{H,x}_{ y_J, tw_J}   \\
&=&(q-1-x)^2 \Big(  (q-1) R^{H,x}_{ y_J, w_J} + q R^{H,x}_{ t y_J, w_J} \Big) \\
&=& (q-1-x)^2   (q-1) R^{H,x}_{ y_J,  w_J},
\end{eqnarray*}
where  the last equality follows since $t \not \leq w_J$.
The two expressions coincide since $(q-1-x)(q-1)+q=(q-1-x)^2$ is the equation defining $x$ (changing $x$ with $q-1-x$). 
\end{enumerate}

{\bf Case 2.}
Notice that, as in the previous  case, also now we have $w=tst \cdot \, w_{J}$ and $M(w)= st \cdot \, w_{J}$. Then  the assertion is proved by  an  argument which is very similar to the argument for the proof in Case 1. However there are some differences and we prefer to write explicitly all subcases where the differences occur.

Since $t\in D_L(w)$, $\lambda_t$ is a special matching of $w$. $M$ and $\lambda_t$ commute but they agree on $w$ (i.e., $|\langle M, \lambda_t \rangle (w)| = 2$). 
 The lower dihedral interval $[e, w_0(s,t)]$ contains 3 orbits of $\langle M,\lambda_t\rangle $: 2 orbits with 2 elements and the other with 4 elements; hence every orbit of $\langle M,\lambda_t\rangle $ must have either 2 or 4 elements,  by Proposition~\ref{4.3}. 

Let $u \leq w$, $u \in W^H$.  We need to show that
\begin{equation}
 R_{u,w}^{H,x} (q)= \left\{ \begin{array}{ll}
R_{M(u),M(w)}^{H,x}(q), & \mbox{if $M(u)  \lhd u$,} \\
(q-1)R_{u,M(w)}^{H,x}(q)+qR_{M(u),M(w)}^{H,x}(q), & \mbox{if $M(u) \rhd u$
and $M(u) \in W^{H}$,} \\
(q-1-x)R_{u,M(w)}^{H,x}(q), & \mbox{if $M(u) \rhd u$ and  $M(u) \notin W^{H}$.} 
\end{array} \right. 
\end{equation}
This is trivial if also $ |\langle M, \lambda_t \rangle (u)|=2 $ so
assume  $ |\langle M, \lambda_t \rangle (u)|=4 $.  Since $M$ agrees with $\lambda_t$ on $st$, $tst$, $sts$, and $tsts$, necessarily $ \langle M, \lambda_t \rangle (u)= \{y_J, sy_J, ty_J, tsy_J\} $, for a certain $y_J \in W_J$ with $s \notin D_L(y_J)$. Note that  $M(y_J)=sy_J$, $M(ty_J)=tsy_J$, and $t \not \leq y_J$ since $t\notin J$. 

Assume that  $ \langle M, \lambda_t \rangle (u)= \{y_J, sy_J, ty_J, tsy_J\}$ is contained in $W^H$ and note that, by the Word Property (Theorem~\ref{tits}),  also $sty_J$ and $stsy_J$ are in $W^H$ since $t \not \leq y_J$ and no braid move could involve the leftmost  letters $s,t$ (so we cannot obtain a reduced expression ending with a letter in $H$). We distinguish the following 4 cases according to which element in the orbit is $u$.
\begin{enumerate}

\item[(a)] $u= tsy_J$. Then
$$R^{H,x}_{u,w}= R^{H,x}_{ts  y_J,tst  w_J}= R^{H,x}_{sy_J,st  w_J}= R^{H,x}_{ty_J,st w_J } = R^{H,x}_{M(u),M(w)}, $$
where the third equality follows from Lemma~\ref{calcoli}.

\item[(b)] $u = s y_J$. Then
\begin{eqnarray*}
R^{H,x}_{u,w} &=& R^{H,x}_{sy_J,tst w_J} =(q-1) R^{H,x}_{sy_J,stw_J} + q  R^{H,x}_{tsy_J,stw_J} \\
&=& (q-1) R^{H,x}_{y_J, tw_J} + q \Big(   (q-1)  R^{H,x}_{tsy_J,tw_J} + q  R^{H,x}_{stsy_J,tw_J} \Big) \\
&= &  (q-1) R^{H,x}_{y_J, tw_J} + q   (q-1)  R^{H,x}_{sy_J,  w_J} = (q-1) R^{H,x}_{y_J, t w_J} + q    R^{H,x}_{sy_J, tw_J}\\
&= & R^{H,x}_{y_J, stw_J}=   R^{H,x}_{M(u),M(w)}, 
\end{eqnarray*}
where the fourth equality holds since $t \not \leq w_J$ and the fifth holds since 

$$  R^{H,x}_{sy_J, tw_J}  =(q-1) R^{H,x}_{sy_J, w_J} + q  R^{H,x}_{tsy_J,w_J} = (q-1) R^{H,x}_{sy_J, w_J}.$$

\item[(c)] $u = t y_J$. Then
\begin{eqnarray*}
R^{H,x}_{u,w} &=& R^{H,x}_{ty_J,tst w_J} = R^{H,x}_{y_J,st w_J} =  (q-1) R^{H,x}_{y_J,tw_J} + q  R^{H,x}_{sy_J,tw_J} \\
&=& (q-1) \Big( (q-1) R^{H,x}_{y_J, w_J} + q  R^{H,x}_{ty_J, w_J} \Big ) + q \Big(   (q-1)  R^{H,x}_{sy_J,w_J} + q  R^{H,x}_{tsy_J,w_J} \Big) \\
&=& (q-1)^2  R^{H,x}_{y_J, w_J}   + q  (q-1)  R^{H,x}_{sy_J,w_J} , 
\end{eqnarray*}
where the fifth equality follows since $t \not \leq w_J$.
On the other hand 
\begin{eqnarray*}
(q-1) R^{H,x}_{u,M(w)} + q R^{H,x}_{M(u),M(w)}&=& (q-1) R^{H,x}_{ty_J,stw_J} + q R^{H,x}_{tsy_J,stw_J}\\
&=&(q-1)\Big( (q-1) R^{H,x}_{t y_J, tw_J} + q   R^{H,x}_{s t y_J, tw_J} \Big ) + \\
&  &  q \Big( (q-1) R^{H,x}_{t s y_J, tw_J} + q   R^{H,x}_{s ts y_J, tw_J} \Big ) \\
&=&   (q-1)^2  R^{H,x}_{ty_J, tw_J}  + q    (q-1)  R^{H,x}_{tsy_J, tw_J}\\
&=&   (q-1)^2  R^{H,x}_{y_J, w_J}  + q    (q-1)  R^{H,x}_{sy_J, w_J},
\end{eqnarray*}
where the third equality follows since $t \not \leq w_J$.

\item[(d)] $u = y_J$. Then
\begin{eqnarray*}
R^{H,x}_{u,w} &=& R^{H,x}_{y_J,tst w_J} =(q-1) R^{H,x}_{y_J,st  w_J} + q  R^{H,x}_{ty_J,st w_J} \\
&=&(q-1) R^{H,x}_{y_J,st w_J} + q  R^{H,x}_{sy_J,st w_J} = (q-1) R^{H,x}_{u,M(w)} + q R^{H,x}_{M(u),M(w)},
\end{eqnarray*}
where the third equality follows from Lemma~\ref{calcoli}.

\end{enumerate}

Assume now that  $ \langle M, \lambda_t \rangle (u)= \{y_J, sy_J, ty_J, tsy_J\}$ is not contained in $W^H$. The case  
 $\{y_J, sy_J, ty_J, tsy_J\} \cap W^H =\{y_J, sy_J, ty_J\} $ is impossible by Lemma~\ref{coatomi-al-piu}. 

Also the case  
 $\{y_J, sy_J, ty_J, tsy_J\} \cap W^H =\{y_J, sy_J\} $ is impossible. Indeed, in this case $sy_J$ would be the unique coatom of $ts y_J$ in $W^H$, i.e. $tsy_J= s y_J h$ for a certain $h \in H$. Since $t \not \leq s y_J$, $h=t$: being $m(s,t)>2$, this is in contradiction with the Word Property (Theorem~\ref{tits}).  

If  $\{y_J, sy_J, ty_J, tsy_J\} \cap W^H =\{y_J\} $, then clearly $u= y_J$ and 
$$ R^{H,x}_{u,w}=R^{H,x}_{y_J,tst w_J}= (q-1-x) R^{H,x}_{y_J, st w_J}=(q-1-x) R^{H,x}_{u,M(w)},$$
and the assertion follows.

The subcase  $\{y_J, sy_J, ty_J, tsy_J\} \cap W^H =\{y_J, ty_J\} $ can be treated as the corresponding subcase in Case 1 and therefore we omit the proof.

{\bf Case 3.} Notice that, in this case, $s \not\leq \, ^{\{s,t\}}(^Jw)$ (as otherwise $tsts$ would be $\leq w$ by the Subword Property) and  $M(w)= st \cdot \,  ^{\{s,t\}}(^Jw)$. Then this last case is completely analogous to Case 1 and we omit the proof.
\end{proof}

\begin{rem}
Note that, in the last subcase of Case 1 and then also in the last subcase of Case 2 and Case 3 (whose proofs  are omitted)  of the proof of Theorem~\ref{doppiamente-allacciati}, we use the fact that $x$ satisfies $x^2= q + (q-1)x$. 
\end{rem}

\subsection{Dihedral Coxeter groups}
We now prove that the H-special matchings of $w$ are calculating also in the case $[e,w]$ is a dihedral interval (so, in particular, in the case $W$ is a dihedral group, i.e. a Coxeter group of rank 2).
\begin{pro}
\label{prodiedrali}
Given a Coxeter system $(W,S)$ and a subset  $H \subset S$, let $w\in W^H$ be such that $[e,w]$ is a dihedral interval and $[e,w]\cap W^H$ is a chain (i.e. a totally ordered set). Then
$$ R^{H,x}_{u,w} (q)= 
(q-1) (q-1-x) ^{\ell(w) - \ell(u)-1}$$
 for all $u < w$, $u\in W^H$.
\end{pro}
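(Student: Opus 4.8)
The plan is to prove the closed formula by induction on $\ell(w)-\ell(u)$, exploiting the fact that $[e,w]$ is a dihedral interval together with the strong constraint that $[e,w]\cap W^H$ is a chain. First I would set up the dihedral structure explicitly: since $[e,w]$ is dihedral, it is isomorphic to the Bruhat order on a rank-2 Coxeter system, so its elements (other than $e$ and $w$) split into two chains, each element having a unique element covering it from each ``side.'' The left multiplication matchings $\lambda_s$ and $\lambda_t$ (for $s,t$ the two left descents available in the dihedral picture) are $H$-special and calculating by definition, so the strategy is to apply the recurrence (\ref{calcola}) for a suitable left multiplication matching and reduce $R^{H,x}_{u,w}$ to a polynomial of the same shape with $\ell(w)-\ell(u)$ decreased by one.

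The key step is to understand, given $u\in [e,w]\cap W^H$ with $u<w$, which branch of the trichotomy in Theorem~\ref{7.1} (equivalently in (\ref{calcola})) is triggered. Pick $s\in D_L(w)$ and form $sw$; I would argue that $sw$ still has the property that $[e,sw]$ is dihedral with $[e,sw]\cap W^H$ a chain, so the inductive hypothesis applies to the pair $(u,sw)$ or $(su,sw)$ as appropriate. The crucial observation is that because $[e,w]\cap W^H$ is a \emph{chain}, for any $u\in W^H$ with $u<w$ and any $s\in D_L(w)\setminus D_L(u)$, the element $su$ is a coatom-type neighbour of $u$ that generically fails to lie in $W^H$: if $su$ were in $W^H$ then $u$ and $su$ would both be minimal coset representatives in the chain, and the dihedral geometry would force the chain to branch, contradicting totality. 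Thus for the generic covering step we land in the third case of (\ref{calcola}), contributing exactly the factor $(q-1-x)$, while the single remaining reduction (at the bottom, where $u$ is the unique coatom of some element, or where $s$ is actually a left descent of $u$) contributes the factor $(q-1)$. Counting these factors across the $\ell(w)-\ell(u)$ reduction steps yields $(q-1)(q-1-x)^{\ell(w)-\ell(u)-1}$.

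Concretely, the induction base is $u\lhd w$: here I would check directly, using the recurrence with an $s\in D_L(w)\setminus D_L(u)$, that $su\notin W^H$ forces the third case and gives $R^{H,x}_{u,w}=(q-1-x)\cdot 0 + \dots$; more carefully the base case $\ell(w)-\ell(u)=1$ should give $R^{H,x}_{u,w}=q-1$, matching the formula with the exponent $0$. For the inductive step I apply the recurrence once to strip off a left descent $s$ of $w$; the term $R^{H,x}_{u,sw}$ (or $R^{H,x}_{su,sw}$) is handled by induction since $[e,sw]$ is again dihedral with chain-type $W^H$-part, and the genuinely new neighbour either vanishes (because the relevant element is $\not\leq sw$, as happens constantly in the dihedral setting since most elements have exactly one predecessor on each side) or is absorbed into the chain structure. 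Assembling the factors produces the stated product.

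The main obstacle I expect is the careful bookkeeping of \emph{which} of the three cases of the recurrence applies at each stage, and in particular verifying the claim that $su\notin W^H$ for the relevant reduction steps. This hinges on translating the poset-theoretic hypothesis ``$[e,w]\cap W^H$ is a chain'' into the algebraic statement about left descents and minimal coset representatives, and on using Lemma~\ref{coatomi-al-piu} (at most one coatom of an element outside $W^H$ lies in $W^H$) to rule out the branching configurations. The dihedral case is deliberately simpler than the doubly laced case because every proper element of a dihedral interval has a unique neighbour on each side, so once the case analysis of the recurrence is pinned down the factor count is essentially forced; the delicate point is simply ensuring that exactly one step contributes $(q-1)$ and all the others contribute $(q-1-x)$, which I would verify by examining the position of $u$ within the chain $[e,w]\cap W^H$ relative to the two descent directions.
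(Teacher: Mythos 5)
Your overall skeleton — induction, stripping a left descent of $w$ via the recurrence of Theorem~\ref{7.1} — is the same as the paper's, but the case analysis at its core is wrong, and wrong in a way that sinks the whole factor-counting argument. You claim that the chain hypothesis forces $su\notin W^H$ for the generic reduction step, because ``$u$ and $su$ both minimal coset representatives would force the chain to branch.'' This is false: $u\lhd su$ are comparable, so they can perfectly well both lie in a chain; the chain hypothesis only forbids two \emph{same-rank} elements of the dihedral interval from both lying in $W^H$. In fact the truth is exactly opposite to what you assert. A dihedral interval $[e,w]$ with $[e,w]\cap W^H$ a chain is (up to relabelling) the case where exactly one of the two generators $s',t'$ of the relevant dihedral parabolic lies in $H$, say $t'\in H$; then $[e,w]\cap W^H=\{e,\,s',\,t's',\,s't's',\dots,w\}$ contains one element of \emph{every} rank, and for every $u\neq e$ in this chain and every $a\in D_L(w)\setminus D_L(u)$ one has $au\in W^H$ (left-multiplying an alternating word by the letter it does not start with does not change its last letter). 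So the \emph{second} case of the recurrence, not the third, is the generic one; the third case occurs only at $u=e$. Concretely: take $W=\langle s,t\rangle$ with $m(s,t)\geq 5$, $H=\{t\}$, $w=tsts$, $u=s$. Then $D_L(w)=\{t\}$ and $ts\in W^H$, so the recurrence gives
$R^{H,x}_{s,tsts}=(q-1)R^{H,x}_{s,sts}+qR^{H,x}_{ts,sts}=(q-1)^2(q-1-x)+q(q-1)$,
a genuine two-term sum, not a single $(q-1-x)$-multiple of a shorter polynomial.

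This matters because that sum collapses to the claimed product only via the identity $(q-1)(q-1-x)+q=(q-1-x)^2$, which holds precisely because $q-1-x$ (equivalently $x$) is a root of $z^2=(q-1)z+q$, i.e.\ precisely because $x\in\{-1,q\}$. Any correct proof must invoke this relation somewhere: for a formal indeterminate $x$ the recursion of Theorem~\ref{7.1} does \emph{not} produce $(q-1)(q-1-x)^{\ell(w)-\ell(u)-1}$, as the computation above shows. Your proposal never uses the specific values of $x$ at all — the ``exactly one factor $(q-1)$, all other factors $(q-1-x)$'' count would, if valid, prove the formula for arbitrary $x$, which is false — so the gap cannot be repaired by more careful bookkeeping of the same kind. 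The paper's proof has the structure you intended (induction on $\ell(w)$, one application of the recurrence for a left multiplication matching, three cases), but in its second case the inductive step reads
$(q-1)\,(q-1)(q-1-x)^{k}+q\,(q-1)(q-1-x)^{k-1}=(q-1)(q-1-x)^{k+1}$,
with the quadratic relation doing the real work; that is the ingredient your argument is missing.
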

\begin{proof}
We proceed by induction on $\ell(w)$, the case $\ell(w)=1$ being clear.

 Fix a    left multiplication matching $\lambda$ of $w$. Then
$$ R^{H,x}_{u,w} = \left\{ \begin{array}{ll}
R^{H,x}_{\lambda(u),\lambda(w)}, & \mbox{if $u \rhd \lambda(u) $,} \\
(q-1)  R^{H,x}_{u,\lambda(w)}  + q  R^{H,x}_{\lambda(u),\lambda(w)}, & \mbox{if $u \lhd \lambda(u)  \in W^H$,} \\
(q-1-x)  R^{H,x}_{u,\lambda(w)} , & \mbox{if $u \lhd \lambda(u)  \notin W^H$ ,}
\end{array} \right.$$ 
In all three cases, the assertion follows easily by induction.
\end{proof}

\begin{thm}
\label{diedrali}

Given a Coxeter system $(W,S)$ and a subset  $H \subset S$, let $w\in W^H$ be such that $[e,w]$ is a dihedral interval.
 Then all $H$-special matchings of $w$ calculate the $R^{H,x}$-polynomials.
\end{thm}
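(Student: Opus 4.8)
The plan is to proceed by induction on $\ell(w)$, mirroring the strategy used for doubly laced groups in Theorem~\ref{doppiamente-allacciati} but exploiting the special structure of dihedral intervals. Let $M$ be an $H$-special matching of $w$, where $[e,w]$ is dihedral. If $M$ is a left multiplication matching, it is calculating by definition, so I would assume otherwise. The key observation is that in a dihedral interval $[e,w]$ with $w = w_0(s,t)$, there are exactly two special matchings: $\lambda_s = \rho_{s'}$ and $\lambda_t = \rho_{t'}$ (the left multiplications by the two generators, which coincide with the appropriate right multiplications), since a dihedral interval is a ``diamond chain'' whose Hasse diagram admits only these two perfect matchings compatible with the grading. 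Thus any special matching $M$ of $w$ is \emph{already} a left multiplication matching, and hence calculating.

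First I would make this structural claim precise: because $[e,w]$ is dihedral, it is isomorphic to the Bruhat order on a rank-2 Coxeter system, whose Hasse diagram is a cycle (two chains from $\hat 0$ to $\hat 1$ meeting only at the endpoints). A matching of such a poset must pair $e$ with one of its two atoms, and once $M(e) \in \{s,t\}$ is fixed, the special matching condition propagates the choice uniquely up the interval, forcing $M = \lambda_{M(e)}$. This is essentially Lemma~\ref{res}(2) applied in reverse together with the explicit description of dihedral intervals. Consequently every special matching of $w$ is a left multiplication matching, and since every left multiplication matching is $H$-special and calculating, the conclusion is immediate.

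The subtle point — and what I expect to be the main obstacle — is that the statement concerns $H$-special matchings for \emph{arbitrary} $H \subset S$, not merely $H = \emptyset$, so I must verify that the calculating property (\ref{calcola}) genuinely holds for these $\lambda_{M(e)}$ in the parabolic setting, i.e.\ that the three-case recurrence of Theorem~\ref{7.1} is reproduced. For a left multiplication matching $M = \lambda_s$ this is precisely the content of Theorem~\ref{7.1}(3) read with $s = M(e) \in D_L(w)$: the three cases $M(u) \lhd u$, $M(u) \rhd u$ with $M(u) \in W^H$, and $M(u) \rhd u$ with $M(u) \notin W^H$ correspond exactly to $s \in D_L(u)$, $s \notin D_L(u)$ with $su \in W^H$, and $s \notin D_L(u)$ with $su \notin W^H$, respectively. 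Hence the recurrence is literally the defining recurrence for the parabolic $R$-polynomials, and no genuine computation remains.

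The one place requiring care is whether the hypothesis that $[e,w]^H$ be a chain (used in Proposition~\ref{prodiedrali}) is needed here, or whether the theorem holds without it. Since Theorem~\ref{diedrali} drops that chain hypothesis, I would confirm that the argument above — reducing every special matching to a left multiplication matching and invoking Theorem~\ref{7.1} directly — makes no use of $[e,w]^H$ being totally ordered, so that Proposition~\ref{prodiedrali} serves only as a convenient closed-form evaluation in the chain case rather than as a logical prerequisite. Thus the proof reduces entirely to the structural fact that special matchings of dihedral intervals are left multiplications, after which the calculating property is just Theorem~\ref{7.1}.
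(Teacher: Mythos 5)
Your proof rests on a structural claim that is false: it is not true that every special matching of a dihedral interval is a left multiplication matching, and the Hasse diagram of a dihedral interval is not ``a cycle''. You have confused Bruhat order with weak order: in Bruhat order on a dihedral group, every element of length $k$ lies below \emph{both} elements of length $k+1$, so consecutive intermediate ranks form a complete bipartite graph $K_{2,2}$, and a special matching is far from being determined by $M(e)$. Concretely, take $m(s,t)\geq 5$ and $w=stst$, so that $[e,w]$ is a dihedral interval with $8$ elements, and let $M$ be the involution $e\leftrightarrow s$, $t\leftrightarrow ts$, $st\leftrightarrow sts$, $tst\leftrightarrow stst$. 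A direct check of all covering relations shows $M$ is a special matching; it is not a multiplication matching, since $\lambda_s(t)=st\neq ts$ and $\rho_s(tst)=tsts\neq stst$. Moreover, for $H$ with $H\cap\{s,t\}=\{s\}$ one has $w\in W^H$, $[e,w]^H=\{e,t,st,tst,stst\}$, and the only element $u$ of this chain with $M(u)\lhd u$ is $stst$, with $M(stst)=tst\in W^H$; hence $M$ is a genuinely $H$-special, non-multiplication matching to which the theorem applies and which your argument simply does not cover. (Your appeal to Lemma~\ref{res}(2) ``in reverse'' does not rescue the uniqueness claim either: that lemma says orbits of a pair of special matchings are dihedral intervals, not that special matchings of dihedral intervals are multiplications.)

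This is not a repairable detail: the existence of such matchings is precisely what makes Theorem~\ref{diedrali} nontrivial, and it is why the paper does not argue as you do. The paper's proof splits into two cases. If $H\cap\{s,t\}=\emptyset$ (where $s,t$ are the two atoms of $[e,w]$), then $[e,w]^H=[e,w]$, the parabolic $R$-polynomials are the ordinary ones, and the statement is the known $H=\emptyset$ result. Otherwise $[e,w]^H$ is a chain, and Proposition~\ref{prodiedrali} supplies the closed form $R^{H,x}_{u,w}=(q-1)(q-1-x)^{\ell(w)-\ell(u)-1}$; the recurrence for an \emph{arbitrary} $H$-special matching $M$ is then verified by induction directly from this formula, using $H$-specialness to guarantee $M(u)\in W^H$ whenever $M(u)\lhd u$, and $M(u)\leq M(w)$ unless $M(w)=u$ in the upward case. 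So Proposition~\ref{prodiedrali}, which you propose to discard as a mere convenience, is in fact the engine of the proof; your reduction to Theorem~\ref{7.1} handles only the multiplication matchings, which form a proper subset of the $H$-special matchings.
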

\begin{proof}
The intersection $[e,w]\cap W^H$ is either  trivially equal to $[e,w]$ or is a chain. In the first case, the parabolic $R$-polynomials coincide with the ordinary $R$-polynomials and the result is known (and easy to proof). 

Assume we are in the second case and proceed by induction on $\ell(w)$. Let $M$ be an $H$-special matching of $w$ and $u \in W^H$, $u \leq w$. We need to show that
\begin{equation*}
 R_{u,w}^{H,x} (q)= \left\{ \begin{array}{ll}
R_{M(u),M(w)}^{H,x}(q), & \mbox{if $M(u)  \lhd u$,} \\
(q-1)R_{u,M(w)}^{H,x}(q)+qR_{M(u),M(w)}^{H,x}(q), & \mbox{if $M(u) \rhd u$
and $M(u) \in W^{H}$,} \\
(q-1-x)R_{u,M(w)}^{H,x}(q), & \mbox{if $M(u) \rhd u$ and  $M(u) \notin W^{H}$.} 
\end{array} \right. 
\end{equation*} 
Note that
\begin{itemize}
\item  if $ u \rhd M(u)$, then $M(u) \in W^H$ since $M$ is $H$-special,  
\item if $M(u) \rhd u$
and $M(u) \in W^{H}$, then $ M(u) \leq M(w)$ unless  $M(w)=u$.
\end{itemize}  
Then the assertion follows by induction using Proposition~\ref{prodiedrali}.
\end{proof}

\section{Combinatorial Invariance}

In this brief final section, we show the consequences of Theorems~\ref{doppiamente-allacciati} and \ref{diedrali} on the problem of the Combinatorial Invariance of the parabolic Kazhdan--Lusztig and $R$-polynomials. 
\begin{cor}
\label{congettura}
Let $(W_1,S_1)$ and $(W_2,S_2)$ be two doubly laced or dihedral Coxeter systems, with identity elements $e_1$ and $e_2$, and let $H_1 \subseteq S_1$ and 
$H_2\subseteq S_2$. Let $v_1 \in W_1^{H_1}$ and $v_2 \in W_2^{H_2}$ be such that there exists 
a poset-isomorphism  $\psi$ from $[e_1,v_1]$ to $[e_2,v_2]$ which restricts to a 
poset-isomorphism from $[e_1,v_1]^{H_1}$ to $[e_2,v_2]^{H_2}$. Then, for all $u,w \in [e_1,v_1]^{H_1}$, we have 
$$P_{u,w}^{H_1,x} (q)=P_{\psi(u),\psi(w)}^{H_2,x} (q)$$  
 (equivalently, $R_{u,w}^{H_1,x} (q)=R_{\psi(u),\psi(w)}^{H_2,x} (q)$).
\end{cor}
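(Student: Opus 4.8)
The plan is to deduce Corollary~\ref{congettura} from the algorithmic content of Theorems~\ref{doppiamente-allacciati} and~\ref{diedrali}, together with the crucial observation that the concept of $H$-special matching is purely poset-theoretic relative to the embedding data. First I would recall that, by those two theorems, for any doubly laced or dihedral Coxeter system every $H$-special matching $M$ of an element $w \in W^H$ is calculating, meaning that the recurrence~(\ref{calcola}) holds. The strategy is to fix, on each of the two sides, a calculating $H$-special matching and to argue that the poset-isomorphism $\psi$ transports the one into the other, so that the two recurrences run in lockstep and produce identical $R$-polynomials by induction on $\ell(w)$.

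The key step is verifying that $\psi$ preserves $H$-special matchings. By hypothesis $\psi$ is a poset-isomorphism $[e_1,v_1]\to[e_2,v_2]$ that restricts to a poset-isomorphism $[e_1,v_1]^{H_1}\to[e_2,v_2]^{H_2}$; concretely this means $\psi$ carries the covering relation $\lhd$ to $\lhd$ and carries the set of elements lying in $W_1^{H_1}$ onto the set of elements lying in $W_2^{H_2}$. Now suppose $M$ is an $H_1$-special matching of some $w \in [e_1,v_1]^{H_1}$; then I would set $M' = \psi \circ M \circ \psi^{-1}$, restricted to $[e_2,\psi(w)]$. Since the defining conditions of a special matching (it is an involution matching edges of the Hasse diagram, and the monotonicity condition $u_1\lhd u_2 \Rightarrow M(u_1)\le M(u_2)$ when $M(u_1)\ne u_2$) are stated entirely in terms of the partial order and the covering relation, $\psi$ transports them, so $M'$ is a special matching of $\psi(w)$. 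The extra $H$-special condition, namely $u \in W^H,\ M(u)\lhd u \Rightarrow M(u)\in W^H$, likewise involves only $\lhd$ and membership in the minimal-coset-representative set, both of which $\psi$ respects; hence $M'$ is $H_2$-special. This is the place where one genuinely uses that $\psi$ restricts to the parabolic intervals, not merely that it is a poset-isomorphism of the full intervals.

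I would then induct on $\ell(w)$ (equivalently on the rank of $w$ in $[e_1,v_1]$, which $\psi$ preserves) to prove $R^{H_1,x}_{u,w}(q)=R^{H_2,x}_{\psi(u),\psi(w)}(q)$ for all $u\in[e_1,w]^{H_1}$. The base case $\ell(w)\le 1$ is immediate from Theorem~\ref{7.1}. For the inductive step, pick any $H_1$-special matching $M$ of $w$; its image $M'=\psi M \psi^{-1}$ is an $H_2$-special matching of $\psi(w)$. Both are calculating by Theorem~\ref{doppiamente-allacciati} (or Theorem~\ref{diedrali} in the dihedral case), so $R^{H_1,x}_{u,w}$ and $R^{H_2,x}_{\psi(u),\psi(w)}$ are each given by the three-case formula~(\ref{calcola}). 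Because $\psi$ preserves $\lhd$, preserves membership in $W^H$, and intertwines $M$ with $M'$, the three cases match up term by term: $M(u)\lhd u$ iff $M'(\psi(u))\lhd \psi(u)$, and $M(u)\in W^{H_1}$ iff $M'(\psi(u))\in W^{H_2}$. Every $R$-polynomial appearing on the right-hand side is indexed by pairs inside $[e_1,M(w)]^{H_1}$, strictly lower in rank, so the induction hypothesis applies and the two formulas yield equal polynomials.

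The main obstacle I anticipate is ensuring that an $H_1$-special matching of $w$ actually exists and that the inductive bookkeeping stays inside the intervals where the hypotheses hold; this is handled by observing that every lower Bruhat interval admits a left multiplication matching $\lambda_s$ for $s\in D_L(w)$, which is $H$-special for every $H$ (as noted after the definition of $H$-special matching), so the inductive machine never stalls for lack of a calculating matching. Once the statement for $R$-polynomials is established, the corresponding equality of parabolic Kazhdan--Lusztig polynomials $P^{H_1,x}_{u,w}(q)=P^{H_2,x}_{\psi(u),\psi(w)}(q)$ follows because the defining recurrence of Theorem~\ref{7.2} determines the $P^{H,x}$ uniquely from the $R^{H,x}$ together with the degree bound and the rank data $\ell(w)-\ell(u)$, all of which $\psi$ preserves. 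Thus the equality of $R$-polynomials propagates to the equality of Kazhdan--Lusztig polynomials, completing the argument.
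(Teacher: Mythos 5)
Your proposal is correct and takes essentially the same route as the paper's own proof: reduce the claim for $P^{H,x}$ to the claim for $R^{H,x}$ via Theorem~\ref{7.2}, observe that $\psi$ transports ($H$-)special matchings because their defining conditions involve only the order relation and membership in the parabolic interval, and invoke Theorems~\ref{doppiamente-allacciati} and~\ref{diedrali} so that the recurrence~(\ref{calcola}) runs in lockstep on both sides. The paper states this in a few terse lines, while you spell out the conjugation $M'=\psi M\psi^{-1}$, the induction on $\ell(w)$, and the existence of $H$-special matchings via $\lambda_s$; these are exactly the details the paper leaves implicit.
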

\begin{proof}
By Theorem \ref{7.2}, (4), the knowledge of all parabolic Kazhdan--Lusztig polynomials indexed by elements in a fixed parabolic interval is equivalent to  the knowledge of all parabolic $R$-polynomials indexed by elements in that interval. Consequently, the statement  for the parabolic Kazhdan--Lusztig polynomials is equivalent to the statement for the parabolic $R$-polynomials.
We prove the latter.

By Theorems~\ref{doppiamente-allacciati} and \ref{diedrali}, the $H$-special matchings compute the $R$-polynomials of both $W_1$ and $W_2$. Since $[e_1,v_1]$ and $[e_2,v_2]$ are isomorphic, they have the same special matchings, and, since such isomorphism $\psi$ restricts to an isomorphism from $[e_1,v_1]^{H_1}$ to $[e_2,v_2]^{H_2}$, a special matching is $H_1$-special if and only if the corresponding matching is $H_2$-special. The assertion follows from these considerations.
\end{proof}

As an immediate result of Corollary~\ref{congettura}, we obtain the following theorem which establishes Conjecture~\ref{comb-inv-con-parab} for lower intervals within the class of doubly laced and dihedral Coxeter groups.
\begin{thm}
\label{cong}
If $(W_1,S_1)$ and $(W_2,S_2)$ are either doubly laced or dihedral Coxeter systems, then Conjecture~\ref{comb-inv-con-parab} holds when $u_1$ and $u_2$ are the identity elements.
\end{thm}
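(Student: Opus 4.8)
The plan is to obtain Theorem~\ref{cong} as a direct specialization of Corollary~\ref{congettura}, which has already established the combinatorial invariance of the parabolic polynomials for \emph{every} pair of elements in the relevant parabolic intervals; the present statement concerns only the single extremal pair. First I would unpack Conjecture~\ref{comb-inv-con-parab} in the case $u_1=e_1$ and $u_2=e_2$. With these choices the lower Bruhat intervals $[u_1,v_1]$ and $[u_2,v_2]$ become $[e_1,v_1]$ and $[e_2,v_2]$, and the parabolic intervals become $[e_1,v_1]^{H_1}$ and $[e_2,v_2]^{H_2}$. Hence the hypothesis of the conjecture---a poset-isomorphism $\psi$ from $[e_1,v_1]$ to $[e_2,v_2]$ restricting to a poset-isomorphism from $[e_1,v_1]^{H_1}$ to $[e_2,v_2]^{H_2}$---is word-for-word the hypothesis of Corollary~\ref{congettura}.

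Next I would record the elementary fact that a poset-isomorphism between two bounded posets carries the minimum to the minimum and the maximum to the maximum. Since $e_i$ and $v_i$ are respectively the minimum and the maximum of $[e_i,v_i]$, this forces $\psi(e_1)=e_2$ and $\psi(v_1)=v_2$. I would then apply Corollary~\ref{congettura} with the particular choice $u=e_1$ and $w=v_1$; both lie in $[e_1,v_1]^{H_1}$ because $e_1,v_1\in W_1^{H_1}$. The corollary yields $P_{e_1,v_1}^{H_1,x}(q)=P_{\psi(e_1),\psi(v_1)}^{H_2,x}(q)=P_{e_2,v_2}^{H_2,x}(q)$, which is exactly the conclusion of Conjecture~\ref{comb-inv-con-parab} under the standing assumption that $u_1$ and $u_2$ are the identity elements. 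The equivalent assertion for the parabolic $R$-polynomials follows identically (or via Theorem~\ref{7.2}, (4), which makes the two statements interchangeable).

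The point worth emphasizing is that there is essentially no obstacle to overcome here: all of the substantive combinatorial and inductive work---the reduction of the recurrence to $H$-special matchings carried out in Theorems~\ref{doppiamente-allacciati} and \ref{diedrali}, together with the invariance argument---is already internal to Corollary~\ref{congettura}. Theorem~\ref{cong} merely translates that corollary back into the language of the originally posed parabolic Combinatorial Invariance Conjecture, and in doing so observes that Corollary~\ref{congettura} is in fact the stronger statement, since it governs all pairs $u,w$ rather than only the extremal pair $(e_1,v_1)$.
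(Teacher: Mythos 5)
Your proposal is correct and follows exactly the paper's route: the paper proves Theorem~\ref{cong} simply by invoking Corollary~\ref{congettura}, and your argument is just the careful unpacking of that invocation (noting that $\psi$ must send minimum to minimum and maximum to maximum, then specializing the corollary to $u=e_1$, $w=v_1$). Nothing is missing; the extra detail you supply is precisely what the paper's word ``straightforward'' elides.
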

\begin{proof}
Straightforward by Corollary~\ref{congettura}.
\end{proof}

{\bf Acknowledgements:}  I am grateful  to  Pietro Mongelli for sharing  information with me about his counterexample.  I would also like to thank the anonymous referee for the valuable comments, which provided insights that helped improve the paper.

\end{document}